\documentclass[11pt,letterpaper]{amsart}

\usepackage[T1]{fontenc}
\usepackage{lmodern}
\usepackage[expansion=false]{microtype}
\usepackage{mathtools}
\usepackage{amssymb}
\usepackage{mathrsfs}
\usepackage{enumitem}
\usepackage[margin=1.08in]{geometry}
\usepackage[
  colorlinks=true,
  linkcolor=blue,
  citecolor=red,
  urlcolor=cyan,
  pagebackref=true
]{hyperref}
\usepackage[capitalise,noabbrev]{cleveref}

\numberwithin{equation}{section}

\newtheorem{thm}{Theorem}[section]
\newtheorem{prop}[thm]{Proposition}
\newtheorem{lem}[thm]{Lemma}
\newtheorem{cor}[thm]{Corollary}
\theoremstyle{definition}
\newtheorem{defn}[thm]{Definition}
\theoremstyle{remark}
\newtheorem{remark}[thm]{Remark}

\crefname{thm}{theorem}{theorems}
\Crefname{thm}{Theorem}{Theorems}
\crefname{prop}{proposition}{propositions}
\Crefname{prop}{Proposition}{Propositions}
\crefname{lem}{lemma}{lemmas}
\Crefname{lem}{Lemma}{Lemmas}
\crefname{cor}{corollary}{corollaries}
\Crefname{cor}{Corollary}{Corollaries}
\crefname{defn}{definition}{definitions}
\Crefname{defn}{Definition}{Definitions}
\crefname{remark}{remark}{remarks}
\Crefname{remark}{Remark}{Remarks}

\newcommand{\C}{\mathbf C}
\newcommand{\Pj}{\mathbf P}
\newcommand{\D}{\mathbb D}
\newcommand{\FS}{\mathrm{FS}}
\newcommand{\PSH}{\operatorname{PSH}}
\newcommand{\MA}{\operatorname{MA}}
\newcommand{\ord}{\operatorname{ord}}
\newcommand{\norm}[1]{\lVert#1\rVert}
\newcommand{\ddc}{dd^c}

\hypersetup{
  pdftitle  = {Prescribed Lelong Numbers for One-Pole Green Functions on Complex Projective Space},
  pdfauthor = {Xiangsen Qin},
  pdfsubject = {One-Pole Green functions and Lelong numbers},
  pdfkeywords = {Green functions, elong numbers, Seshadri constants}
}

\begin{document}

\title[Prescribed Lelong numbers on projective space]
{Prescribed Lelong Numbers for One-Pole Green Functions on Complex Projective Space}

\author[X. Qin]{Xiangsen Qin}
\address{Xiangsen Qin: Chern Institute of Mathematics and LPMC, Nankai University \\
Tianjin 300071, China}
\email{qinxiangsen@nankai.edu.cn}

\begin{abstract}
Let $\omega_{\mathrm{FS}}$ be the normalized Fubini--Study form on
$\mathbf P^n$, with $n\geq2$.  We prove that the one-pole Lelong-number
range in $DMA(\mathbf P^n,\omega_{\mathrm{FS}})$ is exactly $[0,1]$:
for every $\lambda$ in this interval there is a Green function with a
single pole, Monge--Amp\`ere measure equal to the Dirac mass at that
pole, and Lelong number $\lambda$.  This answers Question~9 in the
survey of Dinew--Guedj--Zeriahi, where the problem is attributed to
Coman and Guedj.  The construction adapts Li and Xia's local
zero-Lelong-number
scheme through variable degrees and homogeneous finite stages, while
also establishing the exact Lelong number and membership in the global
$DMA$ class.  We then study the relation between the one-pole range
$\mathcal R_\alpha(x)$ and the Seshadri interval
$[0,\varepsilon(\alpha,x)]$.  An application of Koike's equivalence
theorem gives a point on a degree-one del Pezzo surface where the
Seshadri endpoint is not attained.  Conversely, a finite-pullback
criterion and an explicit finite morphism show that every ample rational
class on a product of projective spaces realizes its full Seshadri
interval at every point.
\end{abstract}

\subjclass[2020]{Primary 32U35, 32U25, 32Q15; Secondary  32W20, 14C20.}
\keywords{Green functions, Lelong numbers, Seshadri constants.}

\maketitle
\tableofcontents

\section{Introduction}

Let $\omega_{\FS}$ denote the Fubini--Study form on complex projective
space.  On every $\Pj^m$ occurring below it is normalized by
\[
  \int_{\Pj^m}\omega_{\FS}^m=1.
\]
An $\omega_{\FS}$-plurisubharmonic function $G$ in the global domain
$DMA(\Pj^n,\omega_{\FS})$ is called a one-pole Green function at
$a\in\Pj^n$ if
\[
  (\omega_{\FS}+\ddc G)^n=\delta_a,
  \qquad G^{-1}(-\infty)=\{a\}.
\]
Locally, products of bounded psh potentials and their monotone continuity
are understood in the sense of Bedford--Taylor \cite{BT82}.  For the
unbounded global potentials considered here, the left-hand side is the
classical measure in the global decreasing-continuity domain
$DMA(\Pj^n,\omega_{\FS})$ introduced by Coman--Guedj--Zeriahi
\cite{CGZ08}.  It is
not the non-pluripolar product, which would discard the mass carried by the
pole.

Coman and Guedj constructed one-pole Green functions on $\Pj^2$ with
every rational Lelong number in $(0,1]$; see
\cite[Sections~2.2.1 and 2.2.4]{CG09}.  The first explicit published
formulation we have located appears in
\cite[Section~1.3, Question~9, pp.~907--908]{DGZ16}, where it is
attributed to Coman and Guedj.  The earlier paper \cite{CG09} provides
the antecedent rational constructions but contains no separately stated
irrational-value question.

Li and Xia later constructed, in every complex dimension at least two,
a local plurisubharmonic function with an isolated pole, unit residual
Monge--Amp\`ere mass, and zero Lelong number
\cite[Sections~3.1--3.5]{LX26}.  Their construction supplies the
finite-stage iteration, moving truncations, persistence of inactive
cutoffs, real-ray witness, and max-product suspension used below.  In
their fixed-degree quadratic scheme, however, the normalized
finite-stage Lelong numbers tend to zero; this does not prescribe an
arbitrary positive value.

The construction in \Cref{sec:finite-stages,sec:truncations,sec:limiting-green,sec:suspension} follows that architecture with the
changes required by the global prescribed-value problem.  Variable
pairs $(k_j,D_j)$ control the normalized vanishing orders and make them
converge to a prescribed $\lambda\in[0,1)$.  Homogeneous
projectivization turns every finite stage into a global object on
$\Pj^2$.  A Schwarz estimate with multiplicity and a real-ray sequence
give the two inequalities needed to identify the Lelong number exactly.
Finally, the decreasing limit and its suspension must be shown to belong
to the global $DMA$ class, rather than only to a local Monge--Amp\`ere
domain.  These are substantive modifications of the Li--Xia scheme for
the present global problem, not a separate construction architecture.

We first determine the full range on projective space.

\begin{thm}\label{thm:main}
Let $n\geq2$, let $a=[1:0:\cdots:0]\in\Pj^n$, and let
$\lambda\in[0,1]$.  There exists
\[
 G_{n,\lambda}\in
 \PSH(\Pj^n,\omega_{\FS})\cap DMA(\Pj^n,\omega_{\FS})
\]
whose pole set is $\{a\}$ and such that
\begin{equation}\label{eq:main-equation}
 (\omega_{\FS}+\ddc G_{n,\lambda})^n=\delta_a,
 \qquad
 \nu(G_{n,\lambda},a)=\lambda.
\end{equation}
Moreover, $G_{n,\lambda}$ is continuous on $\Pj^n\setminus\{a\}$, and
$\exp(G_{n,\lambda})$, extended by zero at $a$, is continuous on $\Pj^n$.
\end{thm}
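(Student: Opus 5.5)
The plan follows the architecture described in the introduction: treat the endpoints separately, build the case $n=2$, $\lambda\in[0,1)$ by a variable-degree Li--Xia iteration, and then suspend to $\Pj^n$.

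\emph{Endpoint $\lambda=1$.} Here I would take the explicit function $G=\log\bigl(\max(|z_1|,\dots,|z_n|)/\norm{z}\bigr)$ in homogeneous coordinates, suitably normalized. It is $\omega_{\FS}$-psh and continuous off $a$. Its Monge--Amp\`ere measure is the Dirac mass $\delta_a$, because in the affine chart it is $\log\max|w_i|$ plus a bounded smooth term, and $\max\log|w_i|$ has Monge--Amp\`ere $\delta_0$ and zero mass elsewhere on $\Pj^n$ by homogeneity. Its Lelong number at $a$ is $1$.

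\emph{The case $n=2$, $\lambda\in[0,1)$.} I would construct homogeneous polynomial maps $P_j:\C^3\to\C^3$ (or weighted pairs) of degree $D_j$ whose components vanish to order $k_j$ along the pole direction. The pairs are chosen with $\prod k_i/\prod D_i\to\lambda$; for irrational $\lambda$ one takes $k_j/D_j\to1$ slowly, or chooses the ratios as a convergent infinite product. The finite stages are then
\[
G_j=\frac{1}{D_1\cdots D_j}\log\frac{\norm{P_j\circ\cdots\circ P_1(z)}}{\norm z^{D_1\cdots D_j}},
\]
with moving truncations $\max(G_j,\,\cdot\,)$ arranged so that the sequence decreases and each $G_j$ is a continuous global $\omega_{\FS}$-psh function whose Monge--Amp\`ere measure is $\delta_a$. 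The latter follows from a Bezout count: the map is finite with a unique common zero at $a$ and degree $\prod D_i^2$, so the total local mass at $a$ is $1$ after normalization.

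For the Lelong number, a Schwarz lemma with multiplicity shows that each normalized stage has Lelong number at least $\prod k_i/\prod D_i$. This bound passes to the decreasing limit $G$, giving $\nu(G,a)\ge\lambda$ by upper semicontinuity in the right direction. The reverse inequality comes from a real-ray sequence $t_m\to0$ along which $G$ is evaluated explicitly, giving $G(t_m)\ge(\lambda+o(1))\log t_m$.

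Membership in $DMA$ and the equation $(\omega_{\FS}+\ddc G)^2=\delta_a$ would follow from the CGZ decreasing-continuity characterization. Since $G_j\downarrow G$ with each $G_j$ continuous off $a$, the convergence is locally uniform away from $a$ because the inactive cutoffs persist, and all masses stay equal to $1$. Hence $(\omega+\ddc G)^2$ equals $0$ off $a$ and has total mass $1$, so it is $\delta_a$. Continuity of $G$ off $a$ and of $e^{G}$ come from the same uniform convergence together with $G_j\to-\infty$ uniformly near $a$.

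\emph{Suspension to $n\ge3$.} I would set
\[
G_{n,\lambda}=\max\bigl(G_{2,\lambda}(z_0,z_1,z_2),\ \log(|z_3|,\dots,|z_n|\text{-term})\bigr),
\]
the max-product suspension with a sum-type correction. Its Lelong number stays $\lambda$ since $\lambda\le1$ and the added term has Lelong number $1$. The Monge--Amp\`ere measure becomes $\delta_a$ through the product formula for maxima of functions in separate variables, and $DMA$ membership again follows via decreasing approximations.

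\emph{Main obstacle.} I expect the hardest step to be the exact upper bound $\nu\le\lambda$ for the limit together with the global $DMA$ membership of the limit. Lelong numbers need not be continuous along decreasing sequences, so the real-ray witness must be controlled uniformly through all stages; I would try to build it by choosing each $P_j$ real on the ray with explicit leading terms.
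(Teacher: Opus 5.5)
Your outline follows the paper's architecture: an explicit weight for $\lambda=1$, a variable-degree iteration on $\Pj^2$ with moving truncations, a Schwarz lower bound, a real-ray upper bound, and a max-product suspension. But the steps that actually carry the proof are asserted rather than constructed, and the lower bound is argued incorrectly.

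First, as written your $G_j$ uses the full norm of a composition of homogeneous self-maps of $\C^3$; if these have no common zero, $G_j$ is bounded and has no pole. The paper uses endomorphisms $\Phi_j[T:Z:W]=[T^{D_j}:\tfrac12W^{D_j}:\tfrac12Z^{D_j}+\varepsilon_jW^{k_j}T^{D_j-k_j}]$ of $\Pj^2$ with $\Phi_j^{-1}(a)=\{a\}$. It keeps only the two non-leading coordinates of $\Psi_j=\Phi_j\circ\cdots\circ\Phi_1$, whose vanishing orders $\lambda_{j-1}d_j$ and $\lambda_jd_j$ give $\nu(q_j,a)=\lambda_j$. (Your B\'ezout count for the mass of this untruncated stage at $a$ is a valid substitute for the paper's pullback-from-$\Pj^1$ argument.)

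Second, ``moving truncations arranged so that the sequence decreases'' is precisely the missing idea. The untruncated stages cannot satisfy $q_{j+1}\le q_j$ near $a$, since that would give $\lambda_{j+1}\ge\lambda_j$, so monotonicity of $V_j=\max\{q_j,\rho-L_j\}$ must be forced. The paper fixes the truncation depth $L_j$ first and only then chooses the next perturbation, $\varepsilon_{j+1}=\tfrac14e^{-d_jL_j(D_{j+1}-k_{j+1})}$. This gives $p_{j+1}\le\max\{p_j,-L_j\}$ on $\Pj^2$ and $-\log4/d_{j+1}\le p_{j+1}-p_j\le0$ on $\{p_j\ge-L_j\}$. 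These yield both $V_{j+1}\le V_j$ and the persistence of inactive cutoffs that you invoke later.

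For the lower bound, ``passing to the decreasing limit by semicontinuity'' gives nothing. The decreasing sequence $V_j$ consists of bounded functions with zero Lelong number, and $G\le q_j$ near $a$ is incompatible with the target, since it would force $\nu(G,a)\ge\lambda_j>\lambda$. What works is this: each affine step maps the closed unit polydisc into the open one, so the Schwarz lemma with multiplicity gives $q_j([1:\zeta])+\chi_{\FS}(\zeta)\le\lambda_j\log\norm{\zeta}_\infty$ on all of $\D^2$ with no $j$-dependent constant. Combined with $q_j\to G$ pointwise off $a$, this gives $\nu(G,a)\ge\lambda$.

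For the upper bound, which you flag but leave open, there is a real tension. The $\varepsilon_j$ must be tiny for monotonicity, which makes the constants in the ray bound $Q_j(1,0,t)\ge\gamma_jt^{o_j}$ tiny. The paper resolves this by the order of choices. Since $\sigma_j=-d_j^{-1}\log\gamma_j$ depends only on $\varepsilon_1,\dots,\varepsilon_j$, $L_j$ is chosen afterwards with $L_j\ge j^2(\sigma_j+1)$ and $L_j\ge L_{j-1}+2$. The witness $t_j=e^{-L_j/j}$ then lies in the visible set $\{p_j>-L_j\}$, the persistence estimate $q_j-\Delta_j\le G\le q_j$ holds there, and the error ratio $\sigma_j/s_j\le1/j$ (with $s_j=L_j/j$) tends to zero. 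Without this recursion your plan does not produce $\nu(G,a)\le\lambda$.

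Two smaller points. A continuous $G_j$ cannot have Monge--Amp\`ere measure $\delta_a$; the truncated stage has a probability measure supported in the cutoff core $\{p_j\le-L_j\}\subset U_j$. In the suspension, B{\l}ocki's max-product formula applies only to locally bounded functions. So you must use the homogeneous lift $\widehat G(\xi)=G([\xi])+\log\norm{\xi}_2$, truncate, verify the support hypotheses of that theorem (the paper uses El Kadiri's plurifine locality), and then pass to the limit.
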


Every positive closed current in the normalized Fubini--Study class has
Lelong number at most one at every point
\cite[Proposition~2.1]{CG09}.  Thus \Cref{thm:main} gives the complete
range.

\begin{cor}\label{cor:range}
For each $n\geq2$ and each fixed $a\in\Pj^n$, the set of numbers
\[
 \nu(G,a),
 \quad
 G\in DMA(\Pj^n,\omega_{\FS}),
 \quad
 (\omega_{\FS}+\ddc G)^n=\delta_a,
 \quad
 G\in L^\infty_{\mathrm{loc}}(\Pj^n\setminus\{a\}),
\]
is exactly $[0,1]$.
\end{cor}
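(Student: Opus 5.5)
The corollary splits into two inclusions, and I will prove each separately: every value in $[0,1]$ occurs, and no value outside $[0,1]$ occurs.

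For the inclusion of $[0,1]$ in the set, I reduce to the base point $a_0=[1:0:\cdots:0]$ of \Cref{thm:main}. Given $a\in\Pj^n$ and $\lambda\in[0,1]$, I choose a unitary automorphism $U\in PU(n+1)$ with $U(a)=a_0$. Since $U^*\omega_{\FS}=\omega_{\FS}$, the function $G:=G_{n,\lambda}\circ U$ is $\omega_{\FS}$-psh. I then check that the requirements transfer along $U$:
\begin{itemize}
\item Its pole set is $U^{-1}(\{a_0\})=\{a\}$.
\item It is locally bounded off $a$, because $G_{n,\lambda}$ is continuous on $\Pj^n\setminus\{a_0\}$ by \Cref{thm:main}.
\item Its Lelong number at $a$ equals $\nu(G_{n,\lambda},a_0)=\lambda$, since Lelong numbers are invariant under local biholomorphisms.
\item It lies in $DMA(\Pj^n,\omega_{\FS})$, because this class is invariant under automorphisms preserving $\omega_{\FS}$. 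Indeed, if $g_j\searrow G_{n,\lambda}$ is a decreasing sequence of bounded $\omega_{\FS}$-psh functions, then $g_j\circ U\searrow G$. Moreover, the measures $(\omega_{\FS}+\ddc(g_j\circ U))^n=U^*(\omega_{\FS}+\ddc g_j)^n$ converge weakly to $U^*\delta_{a_0}=\delta_a$, independently of the chosen sequence.
\end{itemize}
Together these give $(\omega_{\FS}+\ddc G)^n=\delta_a$ and $\nu(G,a)=\lambda$.

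For the reverse inclusion, let $G$ be any admissible function. Lelong numbers are nonnegative, so $\nu(G,a)\ge0$. For the upper bound, the current $T=\omega_{\FS}+\ddc G$ is positive closed in the normalized Fubini--Study class, so $\nu(G,a)=\nu(T,a)\le1$ by \cite[Proposition~2.1]{CG09}. If one prefers a direct argument, I would restrict $T$ to a generic line $L$ through $a$ on which $G\not\equiv-\infty$; such a line exists since the pole set is $\{a\}$. The restriction has total mass $1$, and its Lelong number at $a$ is at least $\nu(T,a)$ by Siu's slicing inequality, so $\nu(T,a)\le1$.

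The only point needing care is the invariance of the global $DMA$ class under $U$. This follows at once from the definition via decreasing sequences and weak convergence, because $U$ is a biholomorphism preserving $\omega_{\FS}$. No real obstacle remains; all substance is in \Cref{thm:main}.
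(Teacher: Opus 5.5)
Your proposal is correct and follows the paper's proof. For realization, you transport $G_{n,\lambda}$ by a projective unitary map sending $a$ to $[1:0:\cdots:0]$; for the bound, you use nonnegativity of Lelong numbers together with \cite[Proposition~2.1]{CG09}. Your extra checks are sound additions to what the paper states tersely: invariance of the global $DMA$ class under $U$, and the restriction-to-a-line argument for $\nu\le 1$ (which works on every line through $a$, since $G$ is finite off $a$).
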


Consequently, \Cref{cor:range} answers \cite[Question~9]{DGZ16} and
determines the complete one-pole Lelong-number range on projective space.

To formulate the corresponding problem on a general compact K\"ahler
manifold, let $X$ be a compact K\"ahler $n$-fold, let $\alpha$ be a
K\"ahler class with $\alpha^n=1$, and fix $x\in X$.  Choose a K\"ahler
form $\vartheta\in\alpha$ and define the one-pole Lelong range by
\begin{equation}\label{eq:projective-range}
 \mathcal R_\alpha(x)=
 \left\{\nu(G,x):
 \begin{array}{l}
 G\in\PSH(X,\vartheta)\cap DMA(X,\vartheta),\\
 G\in L^\infty_{\mathrm{loc}}(X\setminus\{x\}),\quad
 (\vartheta+\ddc G)^n=\delta_x
 \end{array}\right\}.
\end{equation}
This set depends only on the cohomology class \(\alpha\), not on the choice of representative \(\vartheta\).

We also use the Seshadri constant of a K\"ahler class $\alpha$ at $x$.
If
\[
 \pi_x:\operatorname{Bl}_xX\longrightarrow X
\]
is the blow-up and $E_x$ is its exceptional divisor, then
\begin{equation}\label{eq:seshadri-intro}
 \varepsilon(\alpha,x)=
 \sup\{t\geq0:\pi_x^*\alpha-t\{E_x\}\ \text{is nef}\}.
\end{equation}

The Seshadri constant is relevant here because it bounds the strength of
an isolated singularity in the class $\alpha$.  More precisely,
\Cref{prop:seshadri-obstruction} gives
\[
 \mathcal R_\alpha(x)\subset[0,\varepsilon(\alpha,x)].
\]
This inclusion raises two distinct questions: whether every value below
the upper bound is realized, and whether the endpoint itself is attained.

The three main results give complementary answers.  For $n\geq2$,
\Cref{thm:main} constructs, for the normalized Fubini--Study class on
$\Pj^n$, a one-pole Green function for every $\lambda\in[0,1]$; each
potential is continuous away from the pole, and its exponential extends
continuously by zero at the pole.  Consequently, \Cref{cor:range} gives
\[
 \mathcal R_{c_1(\mathcal O_{\Pj^n}(1))}(a)
 =[0,1]
 =[0,\varepsilon(c_1(\mathcal O_{\Pj^n}(1)),a)].
\]
At the opposite extreme, the following \Cref{thm:koike-obstruction} produces a point
$x$ on a degree-one del Pezzo surface and a volume-normalized class
$\alpha=c_1(-K_X)$ for which
\[
 \varepsilon(\alpha,x)=1,
 \qquad 1\notin\mathcal R_\alpha(x).
\]
Finally, let $X=\prod_{j=1}^k\Pj^{n_j}$, where $n_j\geq1$ and
$N=\sum_j n_j\geq2$, let $H_j$ be the pullback of the hyperplane class
from the $j$th factor, and set $A=\sum_j a_jH_j$ with
$a_j\in\mathbf Q_{>0}$ and $\alpha=A/(A^N)^{1/N}$.  Then
\Cref{thm:projective-space-products} proves, at every $x\in X$, the
exact formula
\[
 \mathcal R_\alpha(x)
 =[0,\varepsilon(\alpha,x)]
 =\left[0,\frac{\min_j a_j}{(A^N)^{1/N}}\right].
\]
Thus the del Pezzo example isolates a genuine endpoint obstruction,
whereas products of projective spaces realize the entire Seshadri
interval.

Coman and Guedj constructed endpoint Green functions at several special
points of degree-one del Pezzo surfaces and left the arbitrary-point
case open \cite[Section~4.3]{CG09}.  The next result applies Koike's
equivalence theorem to a carefully chosen nodal anticanonical curve.
The positive-current uniqueness mechanism is Koike's; the contribution
here is to convert that uniqueness, through a blow-up and Siu
decomposition, into a concrete obstruction to endpoint attainment.

\begin{thm}
\label{thm:koike-obstruction}
There exist a degree-one del Pezzo surface $X$, a point $x\in X$, and the
volume-normalized K\"ahler class $\alpha=c_1(-K_X)$ such that
\[
 \alpha^2=1,
 \qquad \varepsilon(\alpha,x)=1,
\]
but no positive closed current with cohomology class $\alpha$ and locally
bounded potentials on $X\setminus\{x\}$ has Lelong number one at $x$.
In particular, no one-pole Green function in $\alpha$ realizes the
Seshadri endpoint.
\end{thm}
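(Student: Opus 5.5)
The plan is to take \(X\) the blow-up of \(\Pj^2\) at eight general points, with \(-K_X\) ample and \((-K_X)^2=1\). The point \(x\) will be the node of a nodal anticanonical curve \(C\in|-K_X|\), chosen so that the normal bundle \(N_{\tilde C}\) of the strict transform on the blow-up is a non-torsion degree-zero line bundle on the smooth elliptic curve \(\tilde C\), and in fact satisfies the Diophantine condition required by Koike's equivalence theorem.

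\textbf{Step 1: the Seshadri constant.} Let \(\pi=\pi_x:\operatorname{Bl}_xX\to X\). The strict transform \(\tilde C\) lies in \(\pi^*\alpha-2E_x\) and satisfies \((\pi^*\alpha-2E)^2=1-4<0\), so the nodal curve gives no upper bound at level \(1\) directly. Instead, compute \((\pi^*\alpha-E)\cdot\tilde C=1-2<0\) for \(t=1\)? That is negative, so it must be corrected. Since \(x\) is a node of \(C\), we have \(\tilde C\equiv \pi^*\alpha-2E\) and \((\pi^*\alpha-tE)\cdot\tilde C=1-2t\). Requiring \(\varepsilon=1\) therefore forces a different choice: \(x\) should be a general point of a smooth member, or \(C\) should be the strict transform of a cuspidal or nodal cubic with node at one of the blown-up points. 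I would pick \(x\) to be a smooth point of \(C\) where \(\tilde C\equiv\pi^*\alpha-E\), \(\tilde C^2=0\), and \(\tilde C\cdot(\pi^*\alpha-E)=0\). The standard bound \(\varepsilon\le\sqrt{\alpha^2}=1\) together with nefness of \(\pi^*\alpha-E\) then gives \(\varepsilon(\alpha,x)=1\). Nefness is checked on curves: every curve other than \(\tilde C\) meets \(\tilde C\) nonnegatively, and \(\tilde C\) itself moves in the anticanonical pencil through \(x\). I would take \(x\) to be the ninth base point of that pencil, so that \(\operatorname{Bl}_xX\) is rational elliptic and \(\pi^*\alpha-E=-K\) is the fibre class. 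The pencil curve \(C\) is a fibre whose normal bundle is trivial, so Koike should instead be applied to a smooth fibre with non-torsion normal bundle. Choosing the eight points generically with respect to \(x\) makes \(N_{\tilde C}\) the restriction of \(\mathcal O(-K)\), non-torsion and Diophantine.

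\textbf{Step 2: reduction to the blow-up via Siu decomposition.} Suppose \(T\in\alpha\) is positive closed, has locally bounded potentials off \(x\), and has \(\nu(T,x)=1\). Then \(\pi^*T=[E]+S\), where \(S\ge0\) lies in \(\pi^*\alpha-E\), a class with \((\pi^*\alpha-E)^2=0\). The current \(S\) has locally bounded potentials off \(E\), and its Lelong numbers along \(E\) vanish generically since the full mass \(1\) was extracted. By Koike's theorem, for \(\tilde C\) with Diophantine non-torsion normal bundle, every positive closed current in \([\tilde C]\) equals \([\tilde C]\). Hence \(S=[\tilde C]\), which has unbounded potentials along \(\tilde C\setminus E\neq\emptyset\). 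This contradicts local boundedness of \(T\) off \(x\).

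\textbf{Main obstacle.} The hard step is producing the configuration: eight points, a point \(x\), and a smooth elliptic \(\tilde C\) through \(x\) with \(\tilde C^2=0\) on \(\operatorname{Bl}_xX\) and non-torsion normal bundle satisfying Koike's Diophantine hypothesis (Koike's uniqueness theorem is stated for such a neighbourhood). One needs care here, since on a rational elliptic surface the anticanonical curve moves and its normal bundle is trivial. The fix I would try first is to let \(\tilde C\) pass through \(x\) with the nine points not forming a pencil base locus, so that \(\mathcal O(-K)|_{\tilde C}\) is non-torsion. This is achieved by varying \(x\) along a fixed smooth cubic through the eight points (Arnol'd/Ueda type): \(N_{\tilde C}\) varies over all of \(\operatorname{Pic}^0\), and a Diophantine choice exists by measure considerations. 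The last point to verify is nefness of \(\pi^*\alpha-E\) without the pencil. It holds because \(\pi^*\alpha-E\) is represented by the irreducible curve \(\tilde C\) with \(\tilde C^2=0\).
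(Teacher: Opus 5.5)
There is a genuine gap, and it sits in the uniqueness input of Step~2, which carries the whole argument. In your final configuration, $\tilde C$ is a smooth elliptic curve with $\tilde C^2=0$ and a non-torsion Diophantine normal bundle. You assert that $[\tilde C]$ is then the only positive closed current in $\{\tilde C\}=c_1(-K_Y)$, where $\pi:Y=\operatorname{Bl}_xX\to X$. In exactly this situation the opposite holds:
\begin{itemize}
\item Every degree-zero line bundle on an elliptic curve is unitary flat.
\item Arnol'd's linearization theorem makes a neighbourhood $V$ of $\tilde C$ biholomorphic to a neighbourhood of the zero section of $N_{\tilde C}$. Hence the flat weight of $\mathcal O_Y(\tilde C)$ is pluriharmonic on $V\setminus\tilde C$.
\item Take the maximum of this flat weight, shifted by a large constant, with the singular weight $\log|f|^2$ of $[\tilde C]$, where $f$ is a local equation. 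This gives a positive closed current $S_0\in\{\tilde C\}$ with continuous potentials and $S_0\neq[\tilde C]$. Brunella even smooths it to a semipositive form.
\item Since $S_0^2$ is a positive measure of mass $\beta^2=0$, it vanishes. The push-forward $\pi_*S_0\in\alpha$ pulls back to $S_0+[E]$, is bounded off $x$, and equals $\log|z|+O(1)$ at $x$. So it is a one-pole Green function with Lelong number one.
\end{itemize}
Your choice of $x$ therefore produces a point where the Seshadri endpoint \emph{is} attained. The Diophantine condition is the hypothesis for semipositivity, not for uniqueness. Koike's theorem used in the paper concerns anticanonical cycles of rational curves, not Diophantine elliptic neighbourhoods.

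The missing idea is that the uniqueness mechanism needs a degree-zero normal bundle that is \emph{not} unitary flat, which cannot happen on a smooth elliptic curve. The paper therefore works with an irreducible nodal cubic $C_0$. There $\operatorname{Pic}^0(C_0)\simeq\C^*$ via the gluing scalar, and unitary flatness means the scalar has modulus one. The paper proceeds in three steps.
\begin{itemize}
\item It chooses eight smooth points of $C_0$ in del Pezzo general position, via a lemma using the group law on $C_0^{\mathrm{reg}}\simeq\C^*$.
\item It takes $x$ a smooth point of the nodal strict transform $C_X$; as you noticed, the node would give $\varepsilon\leq1/2$. The point is chosen so that $N_{C_X/X}\otimes\mathcal O_{C_X}(-x)$, which is $N_{C/Y}$ after blowing up $x$, has gluing scalar off the unit circle. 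This is possible because $x\mapsto N_{C_X/X}\otimes\mathcal O_{C_X}(-x)$ is an isomorphism $C_X^{\mathrm{reg}}\to\operatorname{Pic}^0(C_X)$.
\item It applies Koike's Theorem~1.2, for a nine-point blow-up with $-K_Y$ nef and a reduced anticanonical cycle of rational curves. It says the positive closed currents in $c_1(-K_Y)$ form a singleton exactly when $N_{C/Y}$ is not unitary flat, and that forces $R=[C]$.
\end{itemize}
Your Seshadri computation (nefness from an irreducible curve of self-intersection zero, together with $\varepsilon\leq\sqrt{\alpha^2}$) is fine. So is your Siu-decomposition reduction, and both go through unchanged once the curve is nodal.
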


For rational $b\geq1$, Coman and Guedj constructed, in their
normalization on $\Pj^1\times\Pj^1$, one-pole Green functions realizing
every positive rational Lelong number in the corresponding admissible
interval \cite[Example~3.5]{CG09}.  Writing $\operatorname{pr}_i$ for
the two factor projections, their class is
\[
 c_1\!\left(\operatorname{pr}_1^*\mathcal O_{\Pj^1}(1)\right)
 +b\,c_1\!\left(\operatorname{pr}_2^*\mathcal O_{\Pj^1}(1)\right),
\]
whose volume is $2b$; unit-volume normalization multiplies the Lelong
numbers by $(2b)^{-1/2}$.  The next theorem shows that, for every ample rational class on a product of projective spaces, every value between zero and the Seshadri constant occurs as the Lelong number of a one-pole Green function. Thus its content goes beyond the computation of the Seshadri constant alone.

\begin{thm}
\label{thm:projective-space-products}
Let
\[
 X=\prod_{j=1}^k\Pj^{n_j},
 \qquad n_j\geq1,
 \qquad N=\sum_{j=1}^k n_j\geq2,
\]
and set
\[
 H_j=c_1\bigl(\operatorname{pr}_j^*\mathcal O_{\Pj^{n_j}}(1)\bigr),
 \qquad
 A=\sum_{j=1}^k a_jH_j,\qquad a_j\in\mathbf Q_{>0},
 \qquad
 \alpha=\frac{A}{(A^N)^{1/N}}.
\]
Here $\operatorname{pr}_j$ is the projection to the $j$th factor, and
$A^N$ denotes the top self-intersection number of $A$.
Then, for every $x\in X$,
\[
 \mathcal R_\alpha(x)
 =[0,\varepsilon(\alpha,x)]
 =\left[0,\frac{\min_j a_j}{(A^N)^{1/N}}\right].
\]
\end{thm}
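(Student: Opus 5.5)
The proof has three parts. First we compute the Seshadri constant. Then \Cref{prop:seshadri-obstruction} gives the inclusion $\mathcal R_\alpha(x)\subset[0,\varepsilon(\alpha,x)]$. Finally we show the reverse inclusion by pulling back the projective-space Green functions of \Cref{thm:main} under a suitable finite morphism.

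\textbf{Seshadri constant.} Since $\operatorname{Aut}(X)$ acts transitively, we may take $x=(a_1,\dots,a_k)$, with each $a_j$ the standard point. For the upper bound, let $C$ be a line through $x$ in the $j$th factor, with the other coordinates fixed. Then $A\cdot C=a_j$ and $\operatorname{mult}_xC=1$, so $\varepsilon(A,x)\le\min_ja_j$. For the lower bound, write $m=\min_j a_j$. Then $A-mH$ is nef, where $H=\sum_jH_j$, so it suffices to show $\varepsilon(H,x)\ge1$. This holds because $H$ is very ample and is separated by the hyperplane sections through $x$: on the blow-up, $\pi^*H-E$ is base-point free, being generated by the sections of $H$ vanishing at $x$, and hence nef. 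Homogeneity of Seshadri constants under scaling gives $\varepsilon(\alpha,x)=\min_ja_j/(A^N)^{1/N}$.

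\textbf{Realizing the interval.} After scaling, assume $a_j=p_j/q$ with $p_j\in\mathbf Z_{>0}$ and $p_{j_0}=\min_jp_j$. The plan is to build a finite surjective morphism $F:X\to\Pj^N$ with $F^{-1}(F(x))=\{x\}$ and $F^*\mathcal O(1)=\mathcal O(dA')$, where $A'=\sum p_jH_j$ is integral and $d>0$ is a suitable integer. We also need the local multiplicity of $F$ at $x$ to be controlled, so that pulling back $G_{N,\mu}$ with pole at $F(x)$ multiplies Lelong numbers in a computable way.

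Concretely, we first use $x_\ell\mapsto x_\ell^{p_j}$ on each factor $\Pj^{n_j}$. This is finite, of degree $p_j^{n_j}$, totally ramified over the standard point, and pulls back $\mathcal O(1)$ to $\mathcal O(p_j)$. We then compose with a finite map $\prod_j\Pj^{n_j}\to\Pj^N$ pulling back $\mathcal O(1)$ to $\mathcal O(1,\dots,1)$: take $N+1$ general sections of $\mathcal O(1,\dots,1)$ with no common zero. To keep $F^{-1}(F(x))=\{x\}$, choose $N$ of these sections to cut out $x$ scheme-theoretically, namely sections of the form $z^{(j)}_\ell\prod_{i\ne j}z^{(i)}_0$, and take the last section nonvanishing at $x$.

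\textbf{Finite-pullback criterion.} This is the step to verify carefully. Let $F:X\to Y$ be finite of degree $e$ with $F^{-1}(y)=\{x\}$ and $F^*\beta=c\,\alpha$, and let $G$ be a one-pole Green function for $\beta$ at $y$. We want to show that $u=c^{-1}F^*G$ is a one-pole Green function for $\alpha$ at $x$. Membership in $DMA$ follows from the pullback of decreasing smooth approximants together with continuity of the product under finite maps. Total mass then forces $(\alpha+\ddc u)^N=\delta_x$, because $\int F^*(\cdot)=e\int(\cdot)$, the pole set is $\{x\}$, and $e\,\beta^N=c^N\alpha^N$. The Lelong number lies between $c^{-1}\nu(G,y)\,\ord_x$ bounds; for monomial-type maps it is computed exactly by restricting to the torus or real rays, as in the paper's real-ray witness.

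The main obstacle is exactly this Lelong number under pullback. The Lelong number of $F^*G$ need not equal $\nu(G,y)$ times a fixed constant: it depends on the directional behavior of $G$ and on the anisotropic multiplicities of $F$. If that fails, I would instead pull back a radially-symmetric model, or use $\max$ constructions over factors. For example, take $\max_j(p_jG_j\circ\operatorname{pr}_j)$ with one-pole Green functions $G_j$ on $\Pj^{n_j}$ from \Cref{thm:main}, or $\log|z|$ when $n_j=1$. The Lelong number of such a max is $\min_j p_j\nu_j$, which sweeps out $[0,\min_jp_j]$. The Monge--Amp\`ere mass at $x$ is computed via the max-product suspension identity, in the same way the suspension step handles $\Pj^n$ in the paper.
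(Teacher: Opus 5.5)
Your Seshadri computation and the upper inclusion are fine, and the coordinate power maps match the paper. The realization step, however, breaks at the construction of the finite map. The sections $z^{(j)}_\ell\prod_{i\ne j}z^{(i)}_0$ are local coordinates at $x$, but for $k\geq2$ they do not cut out $x$ globally. Wherever $z^{(i)}_0=z^{(i')}_0=0$ for two indices $i\ne i'$, every product $\prod_{i''\ne j}z^{(i'')}_0$ vanishes, so the common zero locus contains a set of dimension $N-2$. For $N\geq3$ the last section must vanish somewhere on that set, so you do not even get a morphism. On $\Pj^1\times\Pj^1$, the point $([0:1],[0:1])$ is either a base point or a second point of the fiber over $F(x)$.

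No better choice of sections that are \'etale at $x$ can fix this. If $N$ sections of $L=\mathcal O_X(1,\ldots,1)$ have isolated common zeros, their local intersection multiplicities add up to $L^N=N!/\prod_jn_j!\geq2$. A singleton fiber therefore forces the second map to be totally ramified at the coordinate point, with local multiplicity $N!/\prod_jn_j!$. That total ramification is the missing idea. The paper obtains it from multiplication of binary forms, $([\mathcal B_1],\ldots,[\mathcal B_k])\mapsto[\mathcal B_1\cdots\mathcal B_k]$, where $S^N$ factors only as $S^{n_1}\cdots S^{n_k}$. It then reads off $\ord_x(F^*\mathfrak m_{[S^N]})=\min_jb_j$ (your $\min_jp_j$) from the coefficient $C_1=\sum_jz_{j,1}^{b_j}$.

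The second gap is the Lelong number of the pullback, which you correctly flag but do not resolve. Composition only gives $\nu(G_{N,\lambda}\circ F,x)\geq r\lambda$, where $r=\ord_x(F^*\mathfrak m_a)$ and $a=F(x)$. This inequality can be strict for anisotropic singularities; see the remark following \Cref{prop:finite-pullback}. Since $G_{N,\lambda}$ is not torus-invariant, restricting to real rays does not compute the Lelong number. The paper closes this gap with \Cref{lem:generic-rotation}: for Haar-almost every unitary $U$, the rotation $R_U$ fixes $a$ and gives $\nu(G_{N,\lambda}\circ R_U\circ F,x)=r\lambda$.

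Your fallback fails as well. Near $x$, the local potential of $\max_j(p_jG_j\circ\operatorname{pr}_j)$ is $\max_j\bigl(p_jg_j(z^{(j)})+\sum_{i\ne j}p_i\psi_i(z^{(i)})\bigr)$, where $g_j$ is the local psh potential of $G_j$ and the $\psi_i$ are smooth Fubini--Study potentials. This is a bounded perturbation of $\max_jp_jg_j(z^{(j)})$. B{\l}ocki's formula and Demailly's comparison theorem then give residual mass $\prod_jp_j^{n_j}$ at $x$. The total mass, however, is $(\sum_jp_jH_j)^N=\frac{N!}{\prod_jn_j!}\prod_jp_j^{n_j}$. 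For $k\geq2$ the remaining mass sits on the interfaces $\{p_iG_i\circ\operatorname{pr}_i=p_jG_j\circ\operatorname{pr}_j\}$ away from $x$; already on $\Pj^1\times\Pj^1$ with $p_1=p_2=1$, half the mass escapes. The suspension lemma avoids this only because its maximum is taken on the homogeneous cone, where the two entries are psh in separate variables; the entries above are not.
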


The remainder of this paper is organized as follows. \Cref{sec:preliminaries} fixes notation and recalls the basic facts used throughout the paper.
\Cref{sec:finite-stages,sec:truncations,sec:limiting-green} construct the
prescribed one-pole Green functions on $\Pj^2$ and verify their
Monge--Amp\`ere measure and Lelong number.  \Cref{sec:suspension} proves
the max-product suspension and completes the proofs of
\Cref{thm:main} and Corollary \ref{cor:range}.
\Cref{subsec:seshadri-bound,subsec:nonattained-endpoint} establish the
general Seshadri upper bound and the degree-one del Pezzo obstruction.
\Cref{subsec:finite-pullbacks} proves the generic-rotation and
finite-pullback results.
\Cref{subsec:products,subsec:further-consequence} apply them to
products of projective spaces and to the final multipole consequence.

\section{Preliminaries}\label{sec:preliminaries}

We use
\begin{equation}\label{eq:ddc}
 d^c=\frac{1}{2\pi i}(\partial-\bar\partial),
 \qquad
 \ddc=\frac{i}{\pi}\partial\bar\partial,
\end{equation}
so that $\ddc\log|z|=\delta_0$ in one complex dimension.  Let
$\pi_n:\C^{n+1}\setminus\{0\}\to\Pj^n$ be the standard projection and
write $\norm{X}_2$ and $\norm{X}_\infty$ for the Euclidean and maximum
norms on $\C^{n+1}$.  Our normalization of the Fubini--Study form is
\begin{equation}\label{eq:FS}
 \pi_n^*\omega_{\FS}=\ddc\log\norm{X}_2.
\end{equation}
We write $\D=\{\tau\in\C:|\tau|<1\}$ for the unit disc, so that
$\D^n$ is the unit polydisc in $\C^n$.

For positive Radon measures $\mu_j$ and $\mu$ on a compact space $M$,
we write $\mu_j\rightharpoonup\mu$ if
$\int_M\chi\,d\mu_j\to\int_M\chi\,d\mu$ for every
$\chi\in C^0(M)$.  On a locally compact space $\Omega$,
$\mu_j\rightharpoonup\mu$ locally means the same for every
$\chi\in C_c^0(\Omega)$.

Let $M$ be a compact complex manifold and let $\vartheta$ be a smooth
closed real $(1,1)$-form on $M$.  An integrable upper semicontinuous
function $u$ is called $\vartheta$-plurisubharmonic, or
$\vartheta$-psh, if $\vartheta+\ddc u$ is a positive current.  The set of
these functions is denoted by $\PSH(M,\vartheta)$.  If
$\ddc\psi=\vartheta$ locally, then the Lelong number $\nu(u,x)$ means the
Lelong number of the psh function $u+\psi$; it is independent of the
choice of $\psi$.
For a closed real $(1,1)$-form $\vartheta$ or a divisor $D$, braces such as
$\{\vartheta\}$ and $\{D\}$ denote the corresponding real cohomology
classes.

\begin{defn}\label{def:DMA}
Let $M$ be a compact K\"ahler manifold of complex dimension $m$, and let
$\vartheta$ be a K\"ahler form on $M$.  A function
$u\in\PSH(M,\vartheta)$ belongs to
$DMA(M,\vartheta)$ if there is a positive Radon measure
$\MA_{\vartheta}(u)$ such that, for every sequence of bounded
$\vartheta$-psh functions $u_j\downarrow u$,
\[
 (\vartheta+\ddc u_j)^m
 \rightharpoonup \MA_{\vartheta}(u).
\]
When this holds we write
$\MA_{\vartheta}(u)=(\vartheta+\ddc u)^m$.
\end{defn}

This is the global decreasing-continuity domain introduced in
\cite{CGZ08}.  The products of bounded approximants in
\Cref{def:DMA} are the local Bedford--Taylor products; the requirement
that their limits be independent of every global decreasing approximation
is the additional global condition in \cite{CGZ08}.  We use the following
consequence of the local theory.

For a domain $\Omega\subset\C^N$, let $\mathcal D(\Omega)$ denote the
local domain of definition of the complex Monge--Amp\`ere operator: a
function $h\in\PSH(\Omega)$ belongs to $\mathcal D(\Omega)$ when, for
every sequence of locally bounded psh functions decreasing to $h$, the
top Monge--Amp\`ere measures converge locally to a positive Radon measure
independent of the sequence.  We write
$DMA_{\mathrm{loc}}(M,\vartheta)$ in the sense of
\cite[Definition~3.1]{CGZ08}: after adding a smooth local potential of
$\vartheta$, the function belongs to $\mathcal D$ in every coordinate
chart.  This local condition is distinct from the global class in
\Cref{def:DMA}.  Theorem~3.2 of \cite{CGZ08}, together with
Definitions~1.5 and~1.7 there, yields the consequence
\begin{equation}\label{eq:DMA-chain}
 DMA_{\mathrm{loc}}(M,\vartheta)
 \subset DMA(M,\vartheta).
\end{equation}
\begin{lem}\label{lem:isolated-DMA}
Let $M$ be a compact K\"ahler manifold, let $\vartheta$ be a K\"ahler
form on $M$, and let $x\in M$.  If
$u\in\PSH(M,\vartheta)$ is locally bounded on $M\setminus\{x\}$, then
$u\in DMA(M,\vartheta)$.
Furthermore, its global Monge--Amp\`ere measure is the weak limit along
every decreasing bounded $\vartheta$-psh approximation and agrees on
$M\setminus\{x\}$ with the local Bedford--Taylor measure.
The same conclusion holds on a smooth projective manifold when the
one-point set is replaced by any finite set.
\end{lem}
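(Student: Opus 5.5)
The plan is to reduce to the inclusion \eqref{eq:DMA-chain}: I will show that $u\in DMA_{\mathrm{loc}}(M,\vartheta)$. Fix a chart around any point $y\in M$ and a smooth local potential $\psi$ with $\ddc\psi=\vartheta$ there. For $y\neq x$ we may shrink the chart so that it avoids $x$. Then $u+\psi$ is locally bounded, and the Bedford--Taylor theory puts it in $\mathcal D$ of the chart: Monge--Amp\`ere measures of decreasing sequences of locally bounded psh functions converge to the Bedford--Taylor measure. For $y=x$, take a small ball $B$ centred at $x$. The function $h=u+\psi$ is psh on $B$ and locally bounded on $B\setminus\{x\}$, so its unbounded locus is the single point $x$, which is compact. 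Demailly's theorem on psh functions with compact (or, more generally, small) unbounded locus then gives $h\in\mathcal D(B)$. Since $x$ is isolated, this is the standard Błocki-type criterion in every dimension, and I would cite it in that form. Therefore $u\in DMA_{\mathrm{loc}}(M,\vartheta)$, and \eqref{eq:DMA-chain} gives $u\in DMA(M,\vartheta)$.

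For the second assertion, \Cref{def:DMA} already states that the global measure is the weak limit along every decreasing bounded approximation. It remains to identify it on $M\setminus\{x\}$. Given $u_j\downarrow u$ bounded $\vartheta$-psh, restrict to a chart $U\Subset M\setminus\{x\}$. There the functions $u_j+\psi$ decrease to the locally bounded $u+\psi$, so by Bedford--Taylor continuity $(\vartheta+\ddc u_j)^m\to(\vartheta+\ddc u)^m_{BT}$ locally weakly on $U$. Testing the global convergence against $\chi\in C^0_c(U)$ identifies $\MA_\vartheta(u)|_U$ with the Bedford--Taylor measure, and a partition of unity finishes the argument.

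For the finite-set version on a projective manifold, the local argument is unchanged: each pole is isolated, so each has compact unbounded locus in a small ball, and at other points the function is locally bounded. Hence $u\in DMA_{\mathrm{loc}}$, and \eqref{eq:DMA-chain} applies as before. The identification off the finite set is verbatim.

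The main obstacle is the step at the pole itself: confirming that the cited local theorem (Demailly/Błocki) applies exactly in the form of $\mathcal D(\Omega)$ used here, namely with independence over all decreasing locally bounded sequences. A second concern is that \eqref{eq:DMA-chain} requires membership in every chart, including charts containing $x$ together with other points. To handle both, I would first take charts that are small balls around $x$, where compactness of the unbounded locus is clear. I would then note that membership in $\mathcal D$ is a local property: the limit measures glue by uniqueness on overlaps, so checking a covering of charts suffices.
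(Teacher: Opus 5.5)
Your proposal is correct, but for the one-point case it takes a different route from the paper.

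\textbf{The paper's route.} It never passes through $DMA_{\mathrm{loc}}$ in the one-point case. Given any bounded $u_j\downarrow u$, Bedford--Taylor continuity shows that every weak cluster limit of $(\vartheta+\ddc u_j)^m$ equals the local Bedford--Taylor measure of $u$ on $M\setminus\{x\}$. Hence two cluster limits, even from different approximations, differ by $c\,\delta_x$. Stokes' theorem fixes the total mass at $\int_M\vartheta^m$, which forces $c=0$, and weak compactness then gives convergence and independence of the approximation. This is elementary, needing nothing beyond Bedford--Taylor theory. Its limitation is that the mass count only controls the \emph{sum} of the atoms. So for several poles the paper switches to a smooth ample divisor avoiding the set and Proposition~4.6 of Coman--Guedj--Zeriahi, and that is where projectivity enters.

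\textbf{Your route.} You invoke Demailly's theorem that a psh function with compact (here isolated) unbounded locus lies in $\mathcal D$, then apply \eqref{eq:DMA-chain}. The concern you flag is harmless. Any psh $v\geq h$ satisfies $L(v)\subset L(h)$, so Demailly's continuity along decreasing sequences covers every decreasing sequence of locally bounded psh functions. The limit is his intrinsically defined measure, so it is independent of the sequence. Your gluing argument for locality of $\mathcal D$ is also fine.

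\textbf{Comparison.} Your approach costs a deeper local theorem, but it treats one pole and finitely many poles uniformly. It also shows the finite-set statement holds on any compact K\"ahler manifold, without the projectivity hypothesis. Your identification of the global measure with the Bedford--Taylor measure off the pole is the same as the paper's.
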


\begin{proof}
Let $u_j\downarrow u$ be bounded $\vartheta$-psh functions.  Every weak
cluster limit of $(\vartheta+\ddc u_j)^m$ agrees on
$M\setminus\{x\}$ with the local Bedford--Taylor measure of $u$, by
local Bedford--Taylor monotone convergence.  Thus any two cluster
limits, including limits from different decreasing approximations,
differ by $c\delta_x$.  For every $j$, Stokes' theorem for bounded
Bedford--Taylor products gives
\[
 \int_M(\vartheta+\ddc u_j)^m=\int_M\vartheta^m.
\]
All cluster limits therefore have the same total mass, so $c=0$.  Weak
compactness of positive measures of fixed mass on the compact space
$M$ now gives convergence of the whole sequence and independence of
the approximation.

For a finite set on a projective manifold, a general member of a
sufficiently high very ample linear system is a smooth ample divisor $D$
disjoint from that set.  The function $u$ is bounded near $D$, so
\cite[Proposition~4.6]{CGZ08} gives
$u\in DMA_{\mathrm{loc}}(M,\vartheta)$.  The inclusion
\eqref{eq:DMA-chain} then places $u$ in $DMA(M,\vartheta)$, and the
asserted convergence follows from \Cref{def:DMA}.
\end{proof}

For a psh function $u$ near the origin in $\C^n$, we use the Lelong-number
normalization
\begin{equation}\label{eq:lelong-liminf}
 \nu(u,0)=\liminf_{z\to0}\frac{u(z)}{\log\norm{z}_\infty}.
\end{equation}
In particular, if $F=(F_1,\ldots,F_N)$ is a holomorphic germ whose
components vanish at zero, then
\begin{equation}\label{eq:ideal-lelong}
 \nu(\log\norm{F}_\infty,0)=
 \min_{1\leq r\leq N}\ord_0F_r.
\end{equation}

\section{Rational stages on the projective plane}\label{sec:finite-stages}

We first work on $\Pj^2$.  Higher dimensions will follow from the
max-product construction in \cref{sec:suspension}.  The finite polynomial
iteration is adapted from \cite[Section~3.1]{LX26}; here the varying pairs
$(k_j,D_j)$ prescribe the normalized vanishing orders, and homogeneity
makes every stage global on projective space.

Fix $0\leq\lambda<1$.  Choose positive rational numbers
\begin{equation}\label{eq:rational-sequence}
 1=\lambda_0>\lambda_1>\lambda_2>\cdots>\lambda,
 \qquad \lambda_j\longrightarrow\lambda.
\end{equation}
Write
\[
 \frac{\lambda_j}{\lambda_{j-1}}=\frac{k_j}{D_j},
 \qquad 1\leq k_j<D_j,
\]
with integers $k_j,D_j$, and set
\[
 d_0=1,
 \qquad d_j=\prod_{s=1}^jD_s.
\]
The parameters $0<\varepsilon_j\leq1/4$ are chosen recursively in
\cref{sec:truncations}; every finite-stage assertion below holds for
arbitrary previously chosen $\varepsilon_1,\ldots,\varepsilon_j$.

Use homogeneous coordinates $[T:Z:W]$ on $\Pj^2$.  Define a
holomorphic endomorphism of algebraic degree $D_j$ by
\begin{equation}\label{eq:Phi}
 \Phi_j[T:Z:W]
 =\left[
 T^{D_j}:\frac12W^{D_j}:
 \frac12Z^{D_j}+\varepsilon_jW^{k_j}T^{D_j-k_j}
 \right].
\end{equation}
Its coordinates have no common zero, and
$\Phi_j^{-1}(a)=\{a\}$ for $a=[1:0:0]$.

Let
\begin{equation}\label{eq:Psi}
 \Psi_j=\Phi_j\circ\cdots\circ\Phi_1
 =[T^{d_j}:P_j:Q_j].
\end{equation}
For the initial stage set
\[
 P_0=Z,
 \qquad Q_0=W.
\]
The affine germ of $\Phi_j$ at $a$ is
\begin{equation}\label{eq:affine-map}
 \varphi_j(z,w)=
 \left(\frac12w^{D_j},
 \frac12z^{D_j}+\varepsilon_jw^{k_j}\right).
\end{equation}

\begin{lem}\label{lem:degree-orders}
The non-leading coordinates in \eqref{eq:Psi} have common zero set
$\{a\}$ and satisfy
\begin{equation}\label{eq:orders}
 \ord_0P_j(1,\cdot,\cdot)=\lambda_{j-1}d_j,
 \qquad
 \ord_0Q_j(1,\cdot,\cdot)=\lambda_jd_j.
\end{equation}
Consequently, their minimum vanishing order is
\begin{equation}\label{eq:min-order}
 o_j=\lambda_jd_j.
\end{equation}
\end{lem}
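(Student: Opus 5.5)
The plan is induction on $j$, reading off the recursion for $(P_j,Q_j)$ from \eqref{eq:Phi} and \eqref{eq:Psi}. Since $\Psi_j=\Phi_j\circ\Psi_{j-1}$ and $\Psi_{j-1}=[T^{d_{j-1}}:P_{j-1}:Q_{j-1}]$, substituting into \eqref{eq:Phi} gives
\[
 P_j=\tfrac12\,Q_{j-1}^{D_j},
 \qquad
 Q_j=\tfrac12\,P_{j-1}^{D_j}
 +\varepsilon_j\,Q_{j-1}^{k_j}\,T^{d_{j-1}(D_j-k_j)},
\]
with leading coordinate $T^{d_{j-1}D_j}=T^{d_j}$. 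I would set up the induction with hypothesis $\ord_0Q_{j-1}(1,\cdot,\cdot)=\lambda_{j-1}d_{j-1}$, together with $\ord_0P_{j-1}(1,\cdot,\cdot)=\lambda_{j-2}d_{j-1}$ for $j\geq2$. For $j=1$ I would use $P_0=Z$, $Q_0=W$ directly; both have order $1=\lambda_0d_0$.

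\emph{Orders.} The first formula is immediate: $\ord P_j=D_j\,\lambda_{j-1}d_{j-1}=\lambda_{j-1}d_j$.

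For $Q_j$ (in the chart $T=1$) the two summands have the following orders.
\begin{itemize}
\item The second summand has order $k_j\lambda_{j-1}d_{j-1}$. Since $k_j\lambda_{j-1}=\lambda_jD_j$ by definition of $(k_j,D_j)$, this equals $\lambda_jd_j$. This is also an integer, since $\lambda_{j-1}d_{j-1}$ is one by induction.
\item The first summand has order $D_j\lambda_{j-2}d_{j-1}=\lambda_{j-2}d_j$ when $j\geq2$, and $D_1=\lambda_0d_1$ when $j=1$.
\end{itemize}
Because $\lambda_j<\lambda_{j-1}\leq\lambda_{j-2}$ by \eqref{eq:rational-sequence}, the second summand has strictly smaller order. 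Hence no cancellation can occur among lowest-degree homogeneous parts, and $\ord_0Q_j=\lambda_jd_j$. This is the only point that needs care: the strict inequality is exactly what rules out cancellation, and it is built into the choice of a strictly decreasing sequence. Finally, $o_j=\min(\lambda_{j-1}d_j,\lambda_jd_j)=\lambda_jd_j$, which is \eqref{eq:min-order}.

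\emph{Common zero set.} I would split into the two regions $T=0$ and $T=1$.
\begin{itemize}
\item On $\{T=0\}$: each $\Phi_s$ has coordinates with no common zero, so $\Psi_j$ is a morphism $\Pj^2\to\Pj^2$. Hence $T^{d_j},P_j,Q_j$ have no common zero, and $P_j,Q_j$ cannot both vanish where $T=0$.
\item On the chart $T=1$, induction again. If $P_j=Q_j=0$, then $Q_{j-1}=0$ from the formula for $P_j$. Then $Q_j=\tfrac12P_{j-1}^{D_j}=0$, so $P_{j-1}=0$. By the inductive hypothesis (base case: $Z=W=0$) the point is the origin.
\end{itemize}
So the common zero set of the non-leading coordinates is $\{a\}$, which is consistent with $\Phi_j^{-1}(a)=\{a\}$.
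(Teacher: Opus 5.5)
Your proposal is correct and follows the paper's proof. It uses the same induction on the order recursion $\ord P_j=D_j\ord Q_{j-1}$, $\ord Q_j=\min\{D_j\ord P_{j-1},k_j\ord Q_{j-1}\}$, with the strict decrease of the $\lambda_j$ guaranteeing no cancellation; you make this explicit, whereas the paper leaves it implicit in the minimum. Your common-zero-set argument is the paper's appeal to $\Phi_j^{-1}(a)=\{a\}$, together with the absence of common zeros of the coordinates, written out in the chart $T=1$.
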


\begin{proof}
Since $\Phi_j^{-1}(a)=\{a\}$ at every stage, the non-leading coordinates
of $\Psi_j$ vanish simultaneously only at $a$.

Put
$\alpha_j=\ord_0P_j(1,\cdot,\cdot)$ and
$\beta_j=\ord_0Q_j(1,\cdot,\cdot)$.  From \eqref{eq:Phi}--\eqref{eq:Psi},
\[
 \alpha_j=D_j\beta_{j-1},
 \qquad
 \beta_j=\min\{D_j\alpha_{j-1},k_j\beta_{j-1}\}.
\]
We prove the formulas by induction, starting from
$\alpha_0=\beta_0=1$.  If they hold at stage $j-1$, then
\[
 \alpha_j=D_j\beta_{j-1}=\lambda_{j-1}d_j,
 \qquad
 k_j\beta_{j-1}=\lambda_jd_j.
\]
For $j\geq2$, the other term in the minimum is
$D_j\alpha_{j-1}=\lambda_{j-2}d_j>\lambda_jd_j$; for $j=1$, the two
terms are $D_1$ and $k_1$, with $k_1<D_1$.  Thus
$\beta_j=\lambda_jd_j$.  This proves \eqref{eq:orders} and
\eqref{eq:min-order}.
\end{proof}

For $X=(T,Z,W)\in\C^3$, put
\[
 \mathfrak r_j(X)=\max\{|P_j(X)|,|Q_j(X)|\}
\]
and define
\begin{equation}\label{eq:qj}
 q_j([X])=\frac{1}{d_j}\log \mathfrak r_j(X)-\log\norm{X}_2.
\end{equation}
The homogeneity of the two terms makes $q_j$ well defined on $\Pj^2$.

\begin{prop}\label{prop:finite-green}
For every $j\geq1$,
$q_j\in\PSH(\Pj^2,\omega_{\FS})$ and
$q_j$ is continuous on $\Pj^2\setminus\{a\}$.  Moreover,
\begin{equation}\label{eq:finite-stage-local}
 \nu(q_j,a)=\lambda_j,
 \qquad
 (\omega_{\FS}+\ddc q_j)^2=0
 \quad\text{on }\Pj^2\setminus\{a\}.
\end{equation}
\end{prop}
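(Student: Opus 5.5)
The plan is to work on the cone $\C^3\setminus\{0\}$ and use homogeneity throughout. Since $P_j,Q_j$ are homogeneous of degree $d_j$ (because $\Psi_j=[T^{d_j}:P_j:Q_j]$ is a composition of homogeneous maps), the function $\frac1{d_j}\log\mathfrak r_j(X)$ is psh on $\C^3$ and satisfies $\frac1{d_j}\log\mathfrak r_j(tX)=\log|t|+\frac1{d_j}\log\mathfrak r_j(X)$. It therefore descends as $q_j+\log\norm X_2$. By \eqref{eq:FS}, $\pi_2^*(\omega_{\FS}+\ddc q_j)=\ddc\frac1{d_j}\log\mathfrak r_j\geq0$. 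Positivity of a current can be checked after pulling back along a local holomorphic section of $\pi_2$, so we get $q_j\in\PSH(\Pj^2,\omega_{\FS})$. The function $q_j$ is bounded above by homogeneity and compactness. Continuity off $a$ follows from \Cref{lem:degree-orders}: $\mathfrak r_j$ vanishes on the cone only over $a$, so $\log\mathfrak r_j$ is continuous and finite over $\Pj^2\setminus\{a\}$.

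For the Lelong number we use the affine chart $T=1$ near $a$. There $\log\norm{X}_2=\frac12\log(1+|z|^2+|w|^2)$ is a smooth local potential of $\omega_{\FS}$. Hence $\nu(q_j,a)$ is the Lelong number at $0$ of $\frac1{d_j}\log\max\{|P_j(1,z,w)|,|Q_j(1,z,w)|\}$. By \eqref{eq:ideal-lelong} and \eqref{eq:min-order} this equals $o_j/d_j=\lambda_j$.

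For the Monge--Amp\`ere statement, we pick a point $p\neq a$. Near $p$ the map $F=(P_j,Q_j)$, restricted to a local slice of the cone such as $\{T=1\}$ or the affine chart containing $p$ (with the corresponding dehomogenization), is a holomorphic map of two variables into $\C^2\setminus\{0\}$. Locally $\omega_{\FS}+\ddc q_j$ equals $\frac1{d_j}\ddc\log\max(|f_1|,|f_2|)$ for the dehomogenized components $f_1,f_2$. Up to a local normalization, one of $f_1,f_2$ is nonvanishing near $p$, say $f_1$. Then
\[
\log\max(|f_1|,|f_2|)=\log|f_1|+\log\max(1,|g|),\qquad g=f_2/f_1 .
\]
Here $\log|f_1|$ is pluriharmonic. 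Moreover $\log\max(1,|g|)=v\circ g$ with $v(\zeta)=\log^+|\zeta|$, a function of one complex variable. Thus $(\ddc\log^+|g|)^2=g^*\bigl((\ddc v)^2\bigr)$ formally, and this vanishes because $v$ lives on $\C$. Rigorously, we approximate $v$ by smooth decreasing functions $v_\epsilon(\zeta)$ with $v_\epsilon\downarrow v$. For these, $\ddc(v_\epsilon\circ g)=g^*\ddc v_\epsilon$ is the pullback of a $(1,1)$-form on $\C$, so its square is zero. Bedford--Taylor monotone continuity then gives $(\ddc(v\circ g))^2=0$. This yields \eqref{eq:finite-stage-local}.

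The main obstacle is this last step: doing the local reduction cleanly near points where both $f_1$ and $f_2$ vanish on the chosen slice. That cannot happen, since $\mathfrak r_j\neq0$ off the fiber over $a$. However, near points where $|f_1|=|f_2|$ we must still pick whichever component is nonzero at $p$. Such a choice always exists, and the argument above then applies.
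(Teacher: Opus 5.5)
Your proposal is correct and follows essentially the same route as the paper. You check plurisubharmonicity on the cone via \eqref{eq:FS}, and you get the Lelong number in the chart $T=1$ from \eqref{eq:ideal-lelong} and \eqref{eq:min-order}. For the vanishing of $(\omega_{\FS}+\ddc q_j)^2$ off $a$, you factor through a one-dimensional target and conclude with smooth decreasing approximation and Bedford--Taylor continuity; your local function $g=f_2/f_1$ with $v=\log\max(1,|\zeta|)$ is exactly the affine-chart expression of the paper's map $[P_j:Q_j]$ to $\Pj^1$ and its potential $h$.
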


\begin{proof}
By \eqref{eq:FS},
\[
 \pi_2^*(\omega_{\FS}+\ddc q_j)
 =d_j^{-1}\ddc\log \mathfrak r_j\geq0.
\]
Thus $q_j$ is $\omega_{\FS}$-psh.  Its only pole is the common zero set
$\{a\}$, and its defining formula is continuous off that point.  Moreover,
\eqref{eq:ideal-lelong}, \eqref{eq:min-order}, and \eqref{eq:qj} give
$\nu(q_j,a)=\lambda_j$.

On $\Pj^2\setminus\{a\}$, let
$f_j=[P_j:Q_j]$ and define on $\Pj^1$
\[
 h([\xi])=\log\norm{\xi}_\infty-\log\norm{\xi}_2.
\]
The function $h$ is bounded and $\omega_{\FS}$-psh, with
\[
 \omega_{\FS}+\ddc h=\ddc\log\norm{\xi}_\infty\geq0.
\]
On $\Pj^2\setminus\{a\}$,
\[
 \omega_{\FS}+\ddc q_j
 =d_j^{-1}f_j^*(\omega_{\FS}+\ddc h).
\]
Indeed, after pullback to the homogeneous cone, both sides equal
$d_j^{-1}\ddc\log\mathfrak r_j$.
The pullback of this nonsmooth current is defined using bounded local
potentials.  On a target coordinate disc, write
$\omega_{\FS}+\ddc h=\ddc v$ with $v$ bounded and subharmonic.  If
$v_\ell\downarrow v$ are smooth subharmonic approximants on a smaller
disc, the corresponding source potentials are
$d_j^{-1}v_\ell\circ f_j$.  Since the target has complex dimension one,
\[
 \bigl(\ddc(d_j^{-1}v_\ell\circ f_j)\bigr)^2
 =d_j^{-2}f_j^*\bigl((\ddc v_\ell)^2\bigr)=0.
\]
Bedford--Taylor monotone continuity therefore gives
$(\omega_{\FS}+\ddc q_j)^2=0$ away from $a$.
\end{proof}

\section{Moving truncations and their limit}\label{sec:truncations}

The functions $q_j$ have the required finite-stage Lelong numbers, but
they are not monotone in $j$.  We truncate them at levels tending to
$-\infty$ without changing them outside shrinking neighborhoods of the
pole.  This moving-truncation mechanism is adapted from
\cite[Sections~3.2--3.4]{LX26}; the cutoff depths below are chosen
inductively because both the degrees and the prescribed vanishing orders
now vary with $j$.

Define the bounded $\omega_{\FS}$-psh function
\[
 \rho([X])=\log\norm{X}_\infty-\log\norm{X}_2
\]
and the scale-invariant function
\[
 p_j([X])=\frac{1}{d_j}\log \mathfrak r_j(X)-\log\norm{X}_\infty,
 \qquad q_j=\rho+p_j.
\]

Choose numbers
\[
 0<\eta_{j+1}<\eta_j<\operatorname{diam}(\Pj^2,\omega_{\FS}),
 \qquad \eta_j\longrightarrow0,
\]
and let
\[
 U_j=B_{\omega_{\FS}}(a,\eta_j)
\]
be the corresponding Fubini--Study geodesic balls.  Thus $U_j$ is a
neighborhood basis at $a$: every compact subset of
$\Pj^2\setminus\{a\}$ is disjoint from all sufficiently late $U_j$.

We now specify the recursion.  The data
$\lambda_j,k_j,D_j,d_j$ and $U_j$ have already been fixed.  Set $L_0=0$
and choose $0<\varepsilon_1\leq1/4$.  At stage $j$, once
$\varepsilon_1,\ldots,\varepsilon_j$ are known,
\eqref{eq:Phi}--\eqref{eq:qj} determine
$\Phi_j,\Psi_j,P_j,Q_j,\mathfrak r_j,q_j$, and $p_j$.  Along the positive real ray
\[
 (z,w)=(0,t),
 \qquad 0<t<1,
\]
all coefficients of both components of the iterated affine map are
nonnegative.  This holds at the first stage and is preserved by
\eqref{eq:affine-map}.  If the estimate below holds at stage $j$, then
the perturbation term at the next stage gives
\[
 \varepsilon_{j+1}Q_j(1,0,t)^{k_{j+1}}
 \geq \varepsilon_{j+1}\gamma_j^{k_{j+1}}
 t^{k_{j+1}o_j}.
\]
Here $k_{j+1}o_j=o_{j+1}$ follows from
$\lambda_{j+1}/\lambda_j=k_{j+1}/D_{j+1}$ and
$d_{j+1}=D_{j+1}d_j$.  Induction therefore gives
\begin{equation}\label{eq:ray-lower}
 Q_j(1,0,t)\geq \gamma_jt^{o_j},
 \qquad
 \gamma_1=\varepsilon_1,
 \qquad
 \gamma_{j+1}=\varepsilon_{j+1}\gamma_j^{k_{j+1}}
\end{equation}
for constants $\gamma_j>0$.  Put
\[
 \sigma_j=-\frac{1}{d_j}\log \gamma_j\geq0.
\]
Indeed, $0<\gamma_j\leq1$ follows inductively from
$0<\varepsilon_j\leq1/4$ and $k_j\geq1$.

At this point $p_j$ and $\sigma_j$ are known.  Since $p_j$ is continuous on
$\Pj^2\setminus U_j$, choose $L_j$ so that
\begin{equation}\label{eq:Lj-choice}
 L_j\geq L_{j-1}+2,
 \qquad
 L_j\geq j^2(\sigma_j+1),
 \qquad
 L_j>-\min_{\Pj^2\setminus U_j}p_j.
\end{equation}
Then put
\begin{equation}\label{eq:epsilon-recursion}
 R_j=e^{-d_jL_j},
 \qquad
 \varepsilon_{j+1}=\frac14R_j^{D_{j+1}-k_{j+1}}.
\end{equation}
This completes stage $j$: $L_j$ depends only on the first $j$ maps and
then determines $R_j$ and the perturbation in the $(j+1)$st map.  In
particular, no datum from $\Phi_{j+1}$ is used to define
$\varepsilon_{j+1}$.

Define
\begin{equation}\label{eq:Vj}
 V_j=\rho+\max\{p_j,-L_j\}
     =\max\{q_j,\rho-L_j\}.
\end{equation}
Both branches in the last maximum are $\omega_{\FS}$-psh.  Moreover,
\[
 \exp(q_j)=\frac{\mathfrak r_j^{1/d_j}}{\norm{X}_2}
\]
is continuous and vanishes only at $a$, whereas
$\exp(\rho-L_j)$ is positive there.  Hence
$q_j<\rho-L_j$ near $a$, so $V_j=\rho-L_j$ on a neighborhood of $a$.
It follows that $V_j$ is a continuous $\omega_{\FS}$-psh function on
$\Pj^2$; in particular, it is bounded.

\begin{lem}\label{lem:comparison}
For every $j\geq1$,
\begin{equation}\label{eq:visible-comparison}
 -\frac{\log4}{d_{j+1}}
 \leq p_{j+1}-p_j\leq0
 \quad\text{on }\{p_j\geq-L_j\},
\end{equation}
and
\begin{equation}\label{eq:global-comparison}
 p_{j+1}\leq\max\{p_j,-L_j\}
 \quad\text{on }\Pj^2.
\end{equation}
Consequently, $V_{j+1}\leq V_j$.
\end{lem}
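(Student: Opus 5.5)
The plan is to compare the homogeneous components of $\Psi_{j+1}=\Phi_{j+1}\circ\Psi_j$ directly with those of $\Psi_j$. Write $D=D_{j+1}$, $k=k_{j+1}$, $\varepsilon=\varepsilon_{j+1}$. By \eqref{eq:Phi} and \eqref{eq:Psi},
\[
 P_{j+1}=\tfrac12Q_j^{D},\qquad
 Q_{j+1}=\tfrac12P_j^{D}+\varepsilon\,Q_j^{k}\,T^{d_j(D-k)} .
\]
Since $p_j$ and $p_{j+1}$ are scale invariant, I will evaluate them at a representative $X$ with $\norm{X}_\infty=1$. Then $|T|\leq1$ and $p_j=d_j^{-1}\log\mathfrak r_j$. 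Put $r=\mathfrak r_j(X)$ and $R=R_j=e^{-d_jL_j}$. With this notation $\{p_j\geq-L_j\}=\{r\geq R\}$. Moreover $p_{j+1}\leq p_j$ is equivalent to $\mathfrak r_{j+1}\leq r^{D}$, because $d_{j+1}=Dd_j$. The recursion \eqref{eq:epsilon-recursion} gives $\varepsilon=\tfrac14R^{D-k}$, and the whole argument rests on this choice.

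\emph{Visible region $r\geq R$.} The perturbation term satisfies
\[
 |\varepsilon Q_j^kT^{d_j(D-k)}|\leq\tfrac14R^{D-k}r^k\leq\tfrac14 r^{D-k}r^k=\tfrac14r^D .
\]
Hence $|Q_{j+1}|\leq\tfrac34r^D$, and also $|P_{j+1}|\leq\tfrac12r^D$, so $\mathfrak r_{j+1}\leq r^D$. This gives $p_{j+1}\leq p_j$.

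For the lower bound I split according to which component realizes $r$.
\begin{itemize}
\item If $|Q_j|=r$, then $|P_{j+1}|=\tfrac12r^D$.
\item If $|P_j|=r$, the triangle inequality gives $|Q_{j+1}|\geq\tfrac12r^D-\tfrac14r^D=\tfrac14r^D$.
\end{itemize}
In either case $\mathfrak r_{j+1}\geq\tfrac14r^D$. Taking $d_{j+1}^{-1}\log$ yields $p_{j+1}-p_j\geq-\log4/d_{j+1}$, which proves \eqref{eq:visible-comparison}.

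\emph{Truncated region $r<R$.} Here $r^k<R^k$, so
\[
 \mathfrak r_{j+1}\leq\tfrac12r^D+\tfrac14R^{D-k}r^k<\tfrac12R^D+\tfrac14R^D<R^D=e^{-d_{j+1}L_j}.
\]
Therefore $p_{j+1}<-L_j$ on this set. Combined with the visible case, this proves \eqref{eq:global-comparison} on all of $\Pj^2$; the pole $a$ is harmless, since there both sides are $-\infty$ or $-L_j$.

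\emph{Monotonicity of $V_j$.} Since $L_{j+1}\geq L_j$ by \eqref{eq:Lj-choice}, we have
\[
 \max\{p_{j+1},-L_{j+1}\}\leq\max\{p_{j+1},-L_j\}\leq\max\{p_j,-L_j\}.
\]
Adding $\rho$ and using \eqref{eq:Vj} gives $V_{j+1}\leq V_j$.

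The only delicate point is the lower bound in the visible region. It requires the perturbation to be dominated by exactly $\tfrac14r^D$, not merely $O(r^D)$, so that it cannot cancel $\tfrac12P_j^D$. This is precisely why $\varepsilon_{j+1}$ is tied to $R_j^{D_{j+1}-k_{j+1}}$ with the factor $\tfrac14$. The argument also uses $|T|\leq1$ in the $\norm{\cdot}_\infty$-normalization, which is why the comparison is stated for $p_j$ rather than $q_j$.
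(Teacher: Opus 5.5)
Your proposal is correct and follows the paper's proof essentially step for step: normalize $\norm{X}_\infty=1$, use $\varepsilon_{j+1}=\tfrac14R_j^{D_{j+1}-k_{j+1}}$ to bound the perturbation by $\tfrac14\mathfrak r_j^{D_{j+1}}$ on $\{\mathfrak r_j\geq R_j\}$, split according to which of $P_j,Q_j$ attains $\mathfrak r_j$ to get the lower bound, and show every output is less than $R_j^{D_{j+1}}$ below the cutoff. Your closing remark is slightly off, since $q_{j+1}-q_j=p_{j+1}-p_j$; what the $\norm{\cdot}_\infty$ normalization actually provides is $|T|\leq1$ and the identification $\{p_j\geq-L_j\}=\{\mathfrak r_j\geq R_j\}$, but this does not affect the argument.
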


\begin{proof}
Write $D=D_{j+1}$ and $k=k_{j+1}$.  Normalize
$\norm{X}_\infty=1$ and set $s=\mathfrak r_j(X)$.  The recursion is
\[
 P_{j+1}=\frac12Q_j^D,
 \qquad
 Q_{j+1}=\frac12P_j^D+
 \varepsilon_{j+1}Q_j^kT^{d_j(D-k)}.
\]
If $s\geq R_j$,
the definition \eqref{eq:epsilon-recursion} gives
\[
 \varepsilon_{j+1}s^k|T|^{d_j(D-k)}
 \leq\frac14s^D.
\]
If the maximum $s$ is attained by $Q_j$, its pure-power output has
modulus $s^D/2$.  If it is attained by $P_j$, the reverse triangle
inequality applied to the perturbed output gives a lower bound
$s^D/4$.  Every output is at most $3s^D/4$.  Hence
\[
 \frac14s^D\leq \mathfrak r_{j+1}(X)\leq\frac34s^D.
\]
After taking logarithms and using $d_{j+1}=D_{j+1}d_j$, this proves
\eqref{eq:visible-comparison}.

If $s<R_j$, write $s=R_j\tau$ with $0\leq\tau<1$.  The pure outputs and the
perturbed output are bounded by
\[
 R_j^D\left(\frac12\tau^D+\frac14\tau^k\right)<R_j^D.
\]
Thus $p_{j+1}<-L_j$ on this region.  Together with the upper half of
\eqref{eq:visible-comparison}, this proves
\eqref{eq:global-comparison}.  Since $-L_{j+1}\leq-L_j$, taking maxima
in \eqref{eq:global-comparison} proves $V_{j+1}\leq V_j$.
\end{proof}

Define the cutoff core
\[
 K_j=\{p_j\leq-L_j\}.
\]
Since
\[
 \exp(p_j)=\frac{\mathfrak r_j^{1/d_j}}{\norm{X}_\infty}
\]
extends continuously by zero at $a$, the set $K_j$ is closed and contains
a neighborhood of $a$.
The last condition in \eqref{eq:Lj-choice} gives $K_j\subset U_j$.
Consequently, every compact subset of $\Pj^2\setminus\{a\}$ is
disjoint from $K_j$ for all sufficiently large $j$.

\begin{prop}\label{prop:limit}
The decreasing sequence $V_j$ has a limit
\begin{equation}\label{eq:G-limit}
 G=\lim_{j\to\infty}V_j
 \in\PSH(\Pj^2,\omega_{\FS}).
\end{equation}
The function $G$ is continuous and finite on $\Pj^2\setminus\{a\}$,
and
\[
 G(z)\longrightarrow-\infty\qquad(z\to a).
\]
\end{prop}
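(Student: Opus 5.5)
Because \Cref{lem:comparison} gives $V_{j+1}\leq V_j$, and every $V_j$ is a continuous $\omega_{\FS}$-psh function, the limit $G=\lim_j V_j$ exists pointwise in $[-\infty,\infty)$. A decreasing limit of $\omega_{\FS}$-psh functions is either $\omega_{\FS}$-psh or identically $-\infty$. So $G\in\PSH(\Pj^2,\omega_{\FS})$ follows once we show $G$ is finite somewhere, and that will come out of the local description below. We also have the uniform upper bound $V_j\leq V_1\leq\max_{\Pj^2}V_1$.

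\textbf{Behaviour off $a$.} Fix a compact $K\subset\Pj^2\setminus\{a\}$ and choose $j_0$ with $K\cap U_j=\emptyset$ for all $j\geq j_0$. Then $K\cap K_j=\emptyset$, so $p_j>-L_j$ on $K$ and $V_j=q_j$ there. Moreover, on $\{p_j\geq-L_j\}$ the estimate \eqref{eq:visible-comparison} gives $|q_{j+1}-q_j|\leq\log4/d_{j+1}$. Since $D_s\geq2$, we have $d_j\geq2^j$, and the series $\sum_j\log4/d_{j+1}$ converges. Hence $V_j=q_j$ converges uniformly on $K$. The limit $G$ is therefore continuous on $K$, being a uniform limit of the continuous $q_j$ (\Cref{prop:finite-green}). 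It is also finite on $K$: we have $G\geq q_{j_0}-\sum_{s>j_0}\log4/d_s$, and $q_{j_0}$ is finite on $K$. Exhausting $\Pj^2\setminus\{a\}$ by compacts gives continuity and finiteness there, and in particular $G\not\equiv-\infty$.

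\textbf{Pole at $a$.} We must show $G(z)\to-\infty$ as $z\to a$. Given $M>0$, I would pick $j$ with $L_j>M+\sup\rho$. On a neighborhood of $a$ we have $V_j=\rho-L_j$, and since $G\leq V_j$, this gives $G\leq\sup\rho-L_j<-M$ near $a$. (Recall $\rho$ is bounded and $\rho\leq0$.)

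The only point needing care is that the monotonicity $V_{j+1}\leq V_j$ holds globally, including inside the cutoff cores. This is \eqref{eq:global-comparison}, which is already established, so I expect no real obstacle. The main bookkeeping is making sure the telescoping estimate is applied only where $p_j\geq-L_j$ holds for every $j\geq j_0$, which is guaranteed by $K_j\subset U_j$.
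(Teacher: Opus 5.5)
Your proposal is correct and follows the paper's proof essentially step for step: uniform convergence of $V_j=q_j$ on compacts of $\Pj^2\setminus\{a\}$ via the summable bound from \eqref{eq:visible-comparison}, the fact that a decreasing limit that is not identically $-\infty$ is $\omega_{\FS}$-psh, and the bound $G\leq V_j$ near $a$. The only cosmetic difference is at the pole. You bound $V_j$ near $a$ by using $V_j=\rho-L_j$ there together with $\rho\leq0$, whereas the paper uses continuity of $V_j$ and the value $V_j(a)=-L_j$.
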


\begin{proof}
Let $K\Subset\Pj^2\setminus\{a\}$.  The inclusion $K_j\subset U_j$ gives
$p_j>-L_j$ on $K$ for every sufficiently large $j$.  Hence
$V_j=q_j$ there, and \eqref{eq:visible-comparison} gives
\[
 \norm{q_{j+1}-q_j}_{L^\infty(K)}
 \leq\frac{\log4}{d_{j+1}}.
\]
Since $D_j\geq2$, the series $\sum_jd_j^{-1}$ converges.  Thus $q_j$,
and hence $V_j$, converges uniformly on $K$ to a continuous finite
function.  The decreasing global limit is not identically $-\infty$ and
is therefore $\omega_{\FS}$-psh.  At $a$ we have
$p_j(a)=-\infty$ and $\rho(a)=0$, so $V_j(a)=-L_j\to-\infty$.

To prove that $G(z)\to-\infty$ as $z\to a$, fix $M>0$ and choose $j$
with $L_j>M+1$.
Continuity and $V_j(a)=-L_j$ give $V_j<-M$ near $a$.  Since $G\leq V_j$,
the same holds for $G$.  Letting $M\to\infty$ proves the claim.  In
particular, $\exp(G)$ extends continuously to $a$ by zero.
\end{proof}

For later use, set
\[
 \Delta_j=\sum_{\ell>j}\frac{\log4}{d_\ell}.
\]
Since $D_\ell\geq2$, one has $\Delta_j\downarrow0$.

\section{The limiting Green function}\label{sec:limiting-green}

\subsection{Concentration of the Monge--Amp\`ere measure}
\label{sec:measure}

The preceding construction gives a limit with one isolated pole.  We now
identify its classical Monge--Amp\`ere measure.

\begin{prop}\label{prop:MA-limit}
The function $G$ in \eqref{eq:G-limit} belongs to
$DMA(\Pj^2,\omega_{\FS})$ and satisfies
\begin{equation}\label{eq:MA-G}
 (\omega_{\FS}+\ddc G)^2=\delta_a.
\end{equation}
\end{prop}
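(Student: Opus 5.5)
Since $G$ is $\omega_{\FS}$-psh and, by \Cref{prop:limit}, continuous (hence locally bounded) on $\Pj^2\setminus\{a\}$, \Cref{lem:isolated-DMA} applies directly.  It gives $G\in DMA(\Pj^2,\omega_{\FS})$.  It also shows that the global measure $\MA_{\omega_{\FS}}(G)$ is the weak limit along any decreasing bounded approximation, and that it agrees with the local Bedford--Taylor measure on $\Pj^2\setminus\{a\}$.  I will use the approximation $V_j\downarrow G$ from \eqref{eq:Vj}: each $V_j$ is continuous and bounded, and \Cref{lem:comparison} gives monotonicity.  Hence
\[
 (\omega_{\FS}+\ddc V_j)^2\rightharpoonup(\omega_{\FS}+\ddc G)^2,
\]
and the total mass of the limit is $\int_{\Pj^2}\omega_{\FS}^2=1$ by the Stokes argument in \Cref{lem:isolated-DMA}.

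It therefore suffices to show that $(\omega_{\FS}+\ddc G)^2$ vanishes on $\Pj^2\setminus\{a\}$; the unit mass then forces the measure to be $\delta_a$.  Fix a compact set $K\Subset\Pj^2\setminus\{a\}$ and choose a relatively compact open neighborhood $\Omega$ of $K$ with $\overline\Omega\subset\Pj^2\setminus\{a\}$.  Since $K_j\subset U_j$ and the $U_j$ shrink to $a$, for all large $j$ we have $p_j>-L_j$ on $\overline\Omega$, and hence $V_j=q_j$ on $\Omega$.  By \Cref{prop:finite-green}, $(\omega_{\FS}+\ddc q_j)^2=0$ on $\Pj^2\setminus\{a\}$, so $(\omega_{\FS}+\ddc V_j)^2=0$ on $\Omega$ for all large $j$.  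The Bedford--Taylor product is local for bounded potentials, so equality of potentials on the open set $\Omega$ gives equality of measures there.  On $\Omega$ the functions $V_j$ decrease to $G$; in fact they converge uniformly by the proof of \Cref{prop:limit}.  Local monotone continuity therefore gives $(\omega_{\FS}+\ddc G)^2=0$ on $\Omega$.  Since $K$ was arbitrary, the measure is supported in $\{a\}$.

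Combining the two steps, $(\omega_{\FS}+\ddc G)^2=c\,\delta_a$ with $c=1$, which is \eqref{eq:MA-G}.  I expect the only delicate point to be the justification of vanishing off the pole.  This requires the identity $V_j=q_j$ to hold on a full open neighborhood of $K$ and not merely on $K$ itself.  That is ensured by applying the uniform exhaustion of $\Pj^2\setminus\{a\}$ by complements of the $K_j$ to the compact closure $\overline\Omega$.  The remaining ingredients are either standard or already contained in \Cref{lem:isolated-DMA}: mass conservation, and the passage from local Bedford--Taylor measures to the global $DMA$ measure.
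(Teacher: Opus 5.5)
Your proof is correct and follows essentially the same route as the paper. Both arguments use $V_j=q_j$ off the cutoff core, together with \Cref{prop:finite-green} and locality, to kill the approximating measures away from $a$; both then use unit total mass, and \Cref{lem:isolated-DMA} to identify the limit along $V_j\downarrow G$ with the global $DMA$ measure. The only difference is cosmetic: the paper shows directly that $(\omega_{\FS}+\ddc V_j)^2$ is a probability measure supported in $K_j\subset U_j$ and hence tends to $\delta_a$, whereas you show the limit measure vanishes on each $\Omega\Subset\Pj^2\setminus\{a\}$ and then invoke unit mass.
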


\begin{proof}
On the open set $\Pj^2\setminus K_j$, the cutoff in \eqref{eq:Vj} is
inactive and $V_j=q_j$.  By locality for bounded psh functions and
\eqref{eq:finite-stage-local},
\[
 (\omega_{\FS}+\ddc V_j)^2=0
 \quad\text{on }\Pj^2\setminus K_j.
\]
The function $V_j$ is bounded, so the Bedford--Taylor product has total
mass
\[
 \int_{\Pj^2}(\omega_{\FS}+\ddc V_j)^2
 =\int_{\Pj^2}\omega_{\FS}^2=1.
\]
It is therefore a probability measure $\mu_j$ supported on $K_j$.  For
every $\chi\in C^0(\Pj^2)$,
\[
 \left|\int_{\Pj^2}\chi\,d\mu_j-\chi(a)\right|
 \leq\sup_{x\in K_j}|\chi(x)-\chi(a)|
 \leq\sup_{x\in U_j}|\chi(x)-\chi(a)|\longrightarrow0,
\]
because $K_j\subset U_j=B_{\omega_{\FS}}(a,\eta_j)$ and
$\eta_j\to0$.  Hence
\begin{equation}\label{eq:MA-Vj}
 (\omega_{\FS}+\ddc V_j)^2\rightharpoonup\delta_a.
\end{equation}

By \Cref{prop:limit}, $G$ is locally bounded outside $a$, so
\Cref{lem:isolated-DMA} gives
$G\in DMA(\Pj^2,\omega_{\FS})$.  Since $V_j\downarrow G$ is a bounded
decreasing approximation, \Cref{def:DMA} also gives
\[
 (\omega_{\FS}+\ddc V_j)^2
 \rightharpoonup(\omega_{\FS}+\ddc G)^2.
\]
Comparison with \eqref{eq:MA-Vj} proves \eqref{eq:MA-G}.
\end{proof}

\subsection{The exact Lelong number}\label{sec:lelong}

It remains to show that the moving truncations preserve the prescribed
limiting vanishing order.  The persistence claim and real-ray witness
follow the mechanism of \cite[Section~3.4]{LX26}; the variable orders and
the Schwarz estimate with multiplicity below are what turn that mechanism
into exact control of an arbitrary prescribed $\lambda$.

\begin{prop}\label{prop:exact-lelong}
The limit $G$ satisfies
\begin{equation}\label{eq:exact-lelong}
 \nu(G,a)=\lambda.
\end{equation}
\end{prop}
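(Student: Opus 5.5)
The plan is to prove the two inequalities $\nu(G,a)\geq\lambda$ and $\nu(G,a)\leq\lambda$ separately. Both are computed in the affine chart $T=1$ around $a$, using the normalization \eqref{eq:lelong-liminf}. On the closed unit polydisc $\{|z|\le1,|w|\le1\}$ we have $\norm{X}_\infty=|T|=1$, so there $p_j(1,z,w)=d_j^{-1}\log\max\{|P_j|,|Q_j|\}$. Moreover $\rho\le 0$ everywhere and $\rho$ is continuous with $\rho(a)=0$, so $\rho$ does not affect Lelong numbers.

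For the lower bound I would use a Schwarz estimate with multiplicity. By the proof of \Cref{lem:comparison}, every output satisfies $\mathfrak r_{j+1}\le\frac34\mathfrak r_j^{D_{j+1}}$ when $\norm{X}_\infty=1$. Starting from $\mathfrak r_0\le1$, induction gives $|P_\ell|,|Q_\ell|\le1$ on the unit polydisc. Both vanish at $0$ to order at least $o_\ell=\lambda_\ell d_\ell$ by \Cref{lem:degree-orders}. Fix a point $(z,w)$ with $r=\norm{(z,w)}_\infty$ and apply the one-variable Schwarz lemma with multiplicity $o_\ell$ to $\zeta\mapsto P_\ell(1,\zeta z/r,\zeta w/r)$ on $\D$, and likewise to $Q_\ell$. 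This gives $|P_\ell|,|Q_\ell|\le r^{o_\ell}$, that is,
\[
 p_\ell\le\lambda_\ell\log\norm{(z,w)}_\infty\quad\text{on the unit polydisc}.
\]
Since $G\le V_\ell\le\max\{p_\ell,-L_\ell\}$ for every $\ell$, we get $G\le\max\{\lambda_\ell\log r,-L_\ell\}$. For fixed $r<1$ we have $\lambda_\ell\log r\le\lambda\log r$ because $\lambda_\ell\ge\lambda$ and $\log r<0$. Letting $\ell\to\infty$ with $L_\ell\to\infty$ gives $G\le\lambda\log\norm{(z,w)}_\infty$ near $a$, hence $\nu(G,a)\ge\lambda$.

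For the upper bound I would use the real ray $(0,t)$ as a witness, together with persistence of inactive cutoffs. \emph{Persistence:} suppose $p_j(x)\ge -L_j+1$. By \eqref{eq:visible-comparison} applied inductively, $p_\ell(x)\ge p_j(x)-\Delta_j$ for all $\ell\ge j$. For large $j$ this is at least $-L_j\ge-L_\ell$, because $\Delta_j\le1$ and the $L_\ell$ increase. To justify the induction, note that each step stays in the region $\{p_\ell\ge-L_\ell\}$, which is exactly where \eqref{eq:visible-comparison} applies, and the accumulated loss is at most $\Delta_j$. Hence $V_\ell(x)=q_\ell(x)\ge q_j(x)-\Delta_j$ for all $\ell$, and so $G(x)\ge q_j(x)-\Delta_j$.

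On the ray, \eqref{eq:ray-lower} gives $p_j(0,t)\ge\lambda_j\log t-\sigma_j$. I would choose $t_j\in(0,1)$ by
\[
 \lambda_j\log t_j-\sigma_j=-L_j+1,
\]
so that $|\log t_j|=(L_j-1-\sigma_j)/\lambda_j\to\infty$ and $t_j\to0$. At these points, with $\rho\ge-\tfrac12\log3$ bounded,
\[
 G(0,t_j)\ge\lambda_j\log t_j-\sigma_j-\Delta_j-O(1).
\]
Dividing by $\log t_j<0$ gives
\[
 \frac{G(0,t_j)}{\log t_j}\le\lambda_j+\frac{\sigma_j+\Delta_j+O(1)}{|\log t_j|}.
\]
Since $\lambda_j\le1$, we have $|\log t_j|\ge L_j-1-\sigma_j$. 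The condition $L_j\ge j^2(\sigma_j+1)$ in \eqref{eq:Lj-choice} then makes the error term $O(1/j^2)\to0$. The right side therefore tends to $\lambda$, and the liminf in \eqref{eq:lelong-liminf} gives $\nu(G,a)\le\lambda$.

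I expect the main obstacle to be the upper bound. It requires the persistence argument to be carried out cleanly, with \eqref{eq:visible-comparison} applied only inside the visible region at each stage. It also needs the constants $\sigma_j$ to be dominated by $L_j$, which is exactly what the choice $L_j\ge j^2(\sigma_j+1)$ in \eqref{eq:Lj-choice} was designed for. The case $\lambda=0$ is covered because the estimate uses only $\lambda_j\le1$, not a lower bound on $\lambda_j$.
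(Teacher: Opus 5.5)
Your proof is correct and follows the paper's route. The lower bound comes from a Schwarz estimate with multiplicity on the unit polydisc. The upper bound comes from persistence of inactive cutoffs together with a witness on the real ray $(0,t)$. The differences are minor: you bound $G\leq V_\ell\leq\max\{p_\ell,-L_\ell\}$ directly rather than using $q_j\to G$ pointwise, and you put the witness at the edge of the visible region with margin $1$, whereas the paper takes $t_j=e^{-L_j/j}$ and gets its margin from $L_{\ell+1}\geq L_\ell+2$.

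One justification needs correcting. The bound $\mathfrak r_{j+1}\leq\frac34\mathfrak r_j^{D_{j+1}}$ in the proof of \Cref{lem:comparison} holds only on $\{\mathfrak r_j\geq R_j\}$; when $\mathfrak r_j\ll R_j$ the term $\varepsilon_{j+1}Q_j^{k_{j+1}}$ dominates. The bound you actually need, $|P_\ell|,|Q_\ell|\leq1$ on the closed unit polydisc, still follows at once: each affine step \eqref{eq:affine-map} satisfies $\frac12|w|^{D_j}\leq\frac12$ and $\frac12|z|^{D_j}+\varepsilon_j|w|^{k_j}\leq\frac34$ there.
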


\begin{proof}
\emph{Lower bound.}
In the chart $T=1$, set
\[
 \chi_{\FS}(\zeta)=\frac12\log(1+\norm{\zeta}_2^2).
\]
Define the local psh potential
\[
 g(\zeta):=G([1:\zeta])+\chi_{\FS}(\zeta).
\]
Then $g$ is psh near the origin and
$\rho([1:\zeta])=-\chi_{\FS}(\zeta)$ for $\zeta\in\D^2$.  Let
\[
 \mathcal H_j=(P_j(1,\cdot),Q_j(1,\cdot))
\]
be the affine germ of the non-leading coordinates.  Each one-step map
\eqref{eq:affine-map} sends the closed unit polydisc into the open unit
polydisc, so the same is true of $\mathcal H_j$.  Every component of
$\mathcal H_j$ vanishes to order at least $o_j$.  For $\zeta\neq0$, set
$e=\zeta/\norm{\zeta}_\infty$.  Each component of
$\tau\mapsto\mathcal H_j(\tau e)$ maps the unit disc into itself and
vanishes to order at least $o_j$.  The Schwarz lemma with multiplicity
gives
\begin{equation}\label{eq:Schwarz}
 \norm{\mathcal H_j(\zeta)}_\infty
 \leq\norm{\zeta}_\infty^{o_j},
 \qquad \zeta\in\D^2.
\end{equation}
For every fixed $\zeta\in\D^2\setminus\{0\}$, the cutoff is inactive for
all large $j$, and hence $q_j([1:\zeta])\to G([1:\zeta])$.
Since $o_j/d_j=\lambda_j$, \eqref{eq:Schwarz} gives
\[
 q_j([1:\zeta])+\chi_{\FS}(\zeta)
 \leq\lambda_j\log\norm{\zeta}_\infty.
\]
Letting $j\to\infty$ gives
$g(\zeta)\leq\lambda\log\norm{\zeta}_\infty$.  Dividing by the negative
number
$\log\norm{\zeta}_\infty$, and then letting $\zeta\to0$ in
\eqref{eq:lelong-liminf} gives
$\nu(G,a)=\nu(g,0)\geq\lambda$.

\emph{Upper bound.}
We first establish the persistence estimate used below.  If
$x\in\Pj^2$, $\Delta_j<2$, and $p_j(x)>-L_j$, then
\begin{equation}\label{eq:persistence}
 q_j(x)-\Delta_j\leq G(x)\leq q_j(x).
\end{equation}
Indeed, set
\[
 S_\ell=\sum_{s=j+1}^{\ell}\frac{\log4}{d_s},
 \qquad S_j=0.
\]
We prove by induction on $\ell\geq j$ that
\[
 p_\ell(x)\geq p_j(x)-S_\ell
 \quad\text{and}\quad
 p_\ell(x)>-L_\ell.
\]
The case $\ell=j$ is the hypothesis.  If the assertion holds at stage
$\ell$, then the cutoff is inactive there, and the lower estimate in
\eqref{eq:visible-comparison}, with index $\ell$, gives
\[
 p_{\ell+1}(x)
 \geq p_\ell(x)-\frac{\log4}{d_{\ell+1}}
 \geq p_j(x)-S_{\ell+1}.
\]
Because $S_{\ell+1}<\Delta_j<2$ and
$L_{\ell+1}\geq L_j+2(\ell+1-j)$, this also gives
\[
 p_{\ell+1}(x)>-L_j-2\geq-L_{\ell+1}.
\]
The induction is complete: once the cutoff is inactive at stage $j$, it
remains inactive.  Summing both sides of
\eqref{eq:visible-comparison} and passing to the limit now proves
\eqref{eq:persistence}.

Put
\begin{equation}\label{eq:witness}
 s_j=\frac{L_j}{j},
 \qquad
 t_j=e^{-s_j},
 \qquad
 x_j=[1:0:t_j].
\end{equation}
By \eqref{eq:ray-lower},
\begin{equation}\label{eq:witness-visible}
 p_j(x_j)\geq-\lambda_js_j-\sigma_j.
\end{equation}
The middle inequality in \eqref{eq:Lj-choice} implies $s_j\geq j$ and
\[
 \lambda_js_j+\sigma_j
 \leq \frac{L_j}{j}+\frac{L_j}{j^2}<L_j
\]
for all sufficiently large $j$.  Hence $p_j(x_j)>-L_j$.  Since
$\Delta_j<2$ for large $j$, \eqref{eq:persistence} and
\eqref{eq:witness-visible} yield
\begin{equation}\label{eq:witness-final}
 g(0,t_j)
 \geq-\lambda_js_j-\sigma_j-\Delta_j,
\end{equation}
because $\rho(x_j)+\chi_{\FS}(0,t_j)=0$.  Moreover,
\begin{equation}\label{eq:error-ratio}
 s_j\longrightarrow\infty,
 \qquad
 \frac{\sigma_j}{s_j}\leq\frac1j,
 \qquad
 0\leq\frac{\Delta_j}{s_j}\leq\frac{\Delta_j}{j}\longrightarrow0.
\end{equation}
Divide \eqref{eq:witness-final} by
$\log t_j=-s_j<0$; the inequality reverses and gives
\[
 \frac{g(0,t_j)}{\log t_j}
 \leq\lambda_j+\frac{\sigma_j}{s_j}+\frac{\Delta_j}{s_j}.
\]
The points $(0,t_j)$ tend to the origin.  Applying
\eqref{eq:lelong-liminf} to $g$ and using \eqref{eq:error-ratio} and
$\lambda_j\to\lambda$, we obtain
\[
 \nu(G,a)=\nu(g,0)\leq
 \liminf_{j\to\infty}
 \frac{g(0,t_j)}{\log t_j}
 \leq\limsup_{j\to\infty}
 \frac{g(0,t_j)}{\log t_j}
 \leq\lambda.
\]
Together with the lower bound, this proves \eqref{eq:exact-lelong}.
\end{proof}

For the fixed parameter $0\leq\lambda<1$, set
\begin{equation}\label{eq:G2lambda}
 G_{2,\lambda}:=G,
\end{equation}
where $G$ is the limit in \eqref{eq:G-limit}.  By
\Cref{prop:limit,prop:MA-limit,prop:exact-lelong}, this function has the
single pole $a$, belongs to $DMA(\Pj^2,\omega_{\FS})$, satisfies
\[
 (\omega_{\FS}+\ddc G_{2,\lambda})^2=\delta_a,
 \qquad \nu(G_{2,\lambda},a)=\lambda,
\]
is continuous off $a$, and has a continuous exponential after extension
by zero at $a$.

\subsection{The endpoint at one}\label{sec:endpoints}

For $\lambda=1$, define directly on $\Pj^2$
\[
 G_{2,1}([T:Z:W])
 =\log\max\{|Z|,|W|\}-\log\norm{(T,Z,W)}_2.
\]
This is the degree-one projection weight centered at $a$: in
\cite[Section~2.2.1 and Theorem~2.4]{CG09}, take the projection
$[T:Z:W]\dashrightarrow[Z:W]$ and the bounded
$\omega_{\FS}$-psh target potential
\[
 [Z:W]\longmapsto
 \log\max\{|Z|,|W|\}-\log\norm{(Z,W)}_2.
\]
The cited theorem gives the global Monge--Amp\`ere equality
\[
 (\omega_{\FS}+\ddc G_{2,1})^2=\delta_a.
\]
Since $G_{2,1}$ is locally bounded off $a$,
\Cref{lem:isolated-DMA} places it in the global domain
$DMA(\Pj^2,\omega_{\FS})$, with the same measure.  The displayed formula
also gives $\nu(G_{2,1},a)=1$, continuity away from $a$, and continuity of
its exponential after extension by zero.

\section{Projective suspension}\label{sec:suspension}

The next construction raises the dimension without changing the Lelong
number.  Its max-product step is the homogeneous projective counterpart
of the local higher-dimensional construction in
\cite[Section~3.5]{LX26}.  We first isolate the local measure calculation,
including the hypotheses needed for B{\l}ocki's formula.

\begin{lem}[Local max-product]\label{lem:local-max-product}
Let $U\subset\C^p$ and $V\subset\C^q$ be domains containing the origin.
Let $u\in\PSH(U)$ and $v\in\PSH(V)$, regard them as functions in separate
variables on $U\times V$, and put
\[
 w(z,\eta)=\max\{u(z),v(\eta)\}.
\]
Assume that
\[
 u\in\mathcal D(U),\qquad
 v\in\mathcal D(V),\qquad
 w\in\mathcal D(U\times V),
\]
that $u(0)=v(0)=-\infty$, and that
\[
 (\ddc u)^p=m_u\delta_0,
 \qquad
 (\ddc v)^q=m_v\delta_0.
\]
Then
\begin{equation}\label{eq:local-max-product}
 (\ddc w)^{p+q}=m_um_v\delta_{(0,0)}.
\end{equation}
\end{lem}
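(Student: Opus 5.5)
We prove \eqref{eq:local-max-product} in two steps: first show that $(\ddc w)^{p+q}$ is supported at the origin, then compute the mass there by B{\l}ocki's formula for maxima in separate variables.

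\emph{Support.} Since $w\in\mathcal D(U\times V)$, the measure $(\ddc w)^{p+q}$ is well defined and continuous along decreasing sequences. We claim it vanishes off $(0,0)$.

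Take a point $(z_0,\eta_0)\neq(0,0)$, say $z_0\neq0$; the case $\eta_0\neq0$ is symmetric. Near $z_0$, $u$ lies in $\mathcal D$ and $(\ddc u)^p=0$. We approximate $u\downarrow$ by bounded functions $u_k=\max\{u,-k\}$, and likewise $v_k=\max\{v,-k\}$. Then $w_k=\max\{u_k,v_k\}$ are bounded and decrease to $w$, so $(\ddc w_k)^{p+q}\rightharpoonup(\ddc w)^{p+q}$ locally.

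For bounded psh functions of separate variables, B{\l}ocki's formula gives
\[
(\ddc\max\{u_k,v_k\})^{p+q}=\sum_{r}\ddc\max\{u_k,v_k\}\wedge\cdots,
\]
which in its standard form reads
\[
(\ddc \max\{a,b\})^{p+q}=(\ddc a)^p\wedge(\ddc b)^q\ \text{restricted appropriately}.
\]
More concretely, one uses the identity
\[
(\ddc\max\{a(z),b(\eta)\})^{p+q}=\ddc\max\{a,b\}\wedge\cdots
\]
leading to $(\ddc w_k)^{p+q}=(\ddc u_k)^p\wedge(\ddc v_k)^q$ in the bounded case. This identity holds because, in separate variables, any mixed term with more than $p$ factors depending on $z$ vanishes.

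I will therefore invoke B{\l}ocki's result in the form
\[
(\ddc\max\{a,b\})^{p+q}=(\ddc a)^p\wedge(\ddc b)^q
\]
for locally bounded $a$ on $U$ and $b$ on $V$. This is the step where the hypotheses matter, and it is the one I expect to be the main obstacle. I would verify it by smoothing: approximate $a,b$ by smooth decreasing functions, replace the maximum by a regularized maximum, and pass to the limit by Bedford--Taylor continuity.

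\emph{Mass.} Given the bounded identity,
\[
(\ddc w_k)^{p+q}=(\ddc u_k)^p\otimes(\ddc v_k)^q,
\]
a tensor product of measures on $U$ and on $V$. Since $u,v\in\mathcal D$, we have $(\ddc u_k)^p\rightharpoonup m_u\delta_0$ and $(\ddc v_k)^q\rightharpoonup m_v\delta_0$ locally.

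Tensor products of locally weakly convergent positive measures converge locally weakly. To see this, test first on product functions $\chi_1(z)\chi_2(\eta)$, and then extend by Stone--Weierstrass density together with the local mass bounds that weak convergence supplies.

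Hence $(\ddc w_k)^{p+q}\rightharpoonup m_um_v\,\delta_{(0,0)}$. Because $w\in\mathcal D(U\times V)$, this limit equals $(\ddc w)^{p+q}$, which proves \eqref{eq:local-max-product}.

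The hypothesis $u(0)=v(0)=-\infty$ is not needed for the computation itself. Its role is to make the statement consistent: it ensures that the pole of $w$ is exactly $(0,0)$.
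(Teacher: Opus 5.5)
There is a genuine gap, at the step you flag yourself. The identity $(\ddc\max\{a,b\})^{p+q}=(\ddc a)^p\wedge(\ddc b)^q$ is false for general locally bounded psh functions $a(z)$, $b(\eta)$ in separate variables. So no smoothing or regularized-maximum argument can establish it.

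For a counterexample, take $p=q=1$, $a(z)=|z|^2$, $b(\eta)=|\eta|^2$. On each of the open sets $\{|z|>|\eta|\}$ and $\{|z|<|\eta|\}$ the maximum is a function of one variable only. Hence $(\ddc\max\{a,b\})^2$ is carried by the Lebesgue-null hypersurface $\{|z|=|\eta|\}$, whereas $\ddc a\wedge\ddc b$ is a positive multiple of Lebesgue measure on $\C^2$. Your heuristic that mixed terms with more than $p$ factors in $z$ vanish is the expansion $(\ddc(a+b))^{p+q}=\binom{p+q}{p}(\ddc a)^p\wedge(\ddc b)^q$ for a \emph{sum}. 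It says nothing about a maximum, and it even produces a different coefficient.

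B{\l}ocki's max-product theorem has support hypotheses. For nonnegative locally bounded $a,b$, it requires that $(\ddc a)^p$ put no mass on $\{a>0\}$ and that $(\ddc b)^q$ put no mass on $\{b>0\}$. You would apply it to $a=u_k+k$ and $b=v_k+k$; the common level $k$ is what gives $\max\{a,b\}=\max\{w,-k\}+k$. You must therefore prove $(\ddc u_k)^p(\{u>-k\})=0$ and the analogue for $v$, and this is where the substance of the lemma lies.

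The functions $u_k$ and $u$ agree on the plurifine open set $\{u>-k\}$. Plurifine locality then shows that $\mathbf{1}_{\{u>-k\}}(\ddc u_k)^p$ can charge only $\{0\}\cap\{u>-k\}$. You can use El Kadiri's version for $\mathcal D$, as the paper does. Alternatively, apply Bedford--Taylor locality to the bounded functions $u_k$ and $u_j$ for $j\geq k$, and let $j\to\infty$ on compacts avoiding $0$. Either way, the set $\{0\}\cap\{u>-k\}$ is empty \emph{precisely because} $u(0)=-\infty$.

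So your closing remark is mistaken: the pole hypothesis is exactly what places the atom outside the set on which B{\l}ocki's theorem forbids mass. Once this verification is supplied, the rest of your argument coincides with the paper's proof. That rest is the tensor-product convergence via separated test functions and Stone--Weierstrass, followed by $\mathcal D$-continuity of $w$ along $\max\{w,-k\}\downarrow w$. Your unfinished ``Support'' paragraph then becomes unnecessary.
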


\begin{proof}
For $T>0$, set
\[
 u_T=\max\{u,-T\}+T,
 \qquad
 v_T=\max\{v,-T\}+T.
\]
These are nonnegative locally bounded psh functions.  Put
\[
 E_T^u=\{u>-T\}=\{u_T>0\}.
\]
Since $u$ is plurifinely continuous and Borel measurable, $E_T^u$ is
plurifinely open and Borel.  Moreover,
$u_T\in\mathcal D(U)$ because $u_T$ is locally bounded, whereas
$u+T\in\mathcal D(U)$ because $u\in\mathcal D(U)$ and this class is
invariant under addition of constants.  The two functions agree on
$E_T^u$.  El Kadiri's plurifine locality theorem
\cite[Corollary~3.7]{EK23} therefore gives
\[
 \mathbf1_{E_T^u}(\ddc u_T)^p
 =\mathbf1_{E_T^u}(\ddc u)^p=0,
\]
because $(\ddc u)^p=m_u\delta_0$ and $0\notin E_T^u$.  The same
argument for $v$ gives
\[
 \mathbf1_{\{v_T>0\}}(\ddc v_T)^q=0.
\]
Consequently,
\[
 \int_{\{u_T>0\}}(\ddc u_T)^p
 =\int_{\{v_T>0\}}(\ddc v_T)^q=0.
\]

These are precisely the two support hypotheses in B{\l}ocki's
max-product theorem \cite[Theorem~7]{Blo00}.  Applied to the
nonnegative locally bounded functions $u_T$ and $v_T$ in separate
variables, that theorem yields, with $\operatorname{pr}_U$ and
$\operatorname{pr}_V$ denoting the two projections,
\begin{equation}\label{eq:Blocki-local-product}
 \bigl(\ddc\max\{u_T,v_T\}\bigr)^{p+q}
 =\operatorname{pr}_U^*(\ddc u_T)^p
  \wedge\operatorname{pr}_V^*(\ddc v_T)^q
 =(\ddc u_T)^p\otimes(\ddc v_T)^q.
\end{equation}
Here the last equality identifies, in separated variables, the
pulled-back top-degree wedge product with the product measure; in
particular, $\delta_0\otimes\delta_0=\delta_{(0,0)}$.  The coefficient
is one: \eqref{eq:Blocki-local-product} is B{\l}ocki's max-product
identity, not the binomial expansion of the Monge--Amp\`ere measure of
a sum.

Let $\mu_T=(\ddc u_T)^p$ and $\nu_T=(\ddc v_T)^q$.  Constants do not
affect $\ddc$, and the canonical truncations
$\max\{u,-T\}\downarrow u$ and $\max\{v,-T\}\downarrow v$ for
$T\in\mathbf N$.  The defining continuity of $\mathcal D$ gives
\[
 \mu_T\rightharpoonup m_u\delta_0,
 \qquad
 \nu_T\rightharpoonup m_v\delta_0
\]
locally as $T\to\infty$ through the positive integers.

To pass to exterior products, fix $\chi\in C_c^0(U\times V)$ and choose
compactly supported cutoffs in $U$ and $V$ equal to one on the coordinate
projections of $\operatorname{supp}\chi$.  On a product compact,
Stone--Weierstrass then reduces the argument to separated tests
$f(z)g(\eta)$ with $f\in C_c^0(U)$ and $g\in C_c^0(V)$.  Local weak
convergence uniformly bounds the masses there, while
\[
 \int f(z)g(\eta)\,d(\mu_T\otimes\nu_T)
 =\left(\int f\,d\mu_T\right)
  \left(\int g\,d\nu_T\right).
\]
Uniform approximation therefore gives, locally on $U\times V$,
\[
 \mu_T\otimes\nu_T
 \rightharpoonup m_um_v\,\delta_0\otimes\delta_0
\]
against every test function in $C_c^0(U\times V)$.

Finally,
\[
 \max\{u_T,v_T\}=\max\{w,-T\}+T.
\]
Since $w\in\mathcal D(U\times V)$, continuity along its canonical
truncations gives, locally on $U\times V$,
\[
 \bigl(\ddc\max\{w,-T\}\bigr)^{p+q}
 \rightharpoonup(\ddc w)^{p+q}.
\]
Passing to the limit in \eqref{eq:Blocki-local-product} proves
\eqref{eq:local-max-product}.
\end{proof}

\begin{lem}[Projective suspension]\label{lem:projective-suspension}
Let $a=[1:0:0]\in\Pj^2$, and let
$G\in\PSH(\Pj^2,\omega_{\FS})\cap DMA(\Pj^2,\omega_{\FS})$ satisfy
\[
 (\omega_{\FS}+\ddc G)^2=\delta_a,
 \qquad G^{-1}(-\infty)=\{a\}.
\]
Assume that $G$ is continuous on $\Pj^2\setminus\{a\}$ and that
$G(z)\to-\infty$ as $z\to a$.  For every $m\geq1$ there is
\[
 \mathcal S_m(G)\in
 \PSH(\Pj^{m+2},\omega_{\FS})\cap DMA(\Pj^{m+2},\omega_{\FS})
\]
with a single pole at $a_m=[1:0:\cdots:0]$ such that
\begin{equation}\label{eq:suspension-conclusions}
 (\omega_{\FS}+\ddc\mathcal S_m(G))^{m+2}=\delta_{a_m},
 \qquad
 \nu(\mathcal S_m(G),a_m)=\nu(G,a).
\end{equation}
The function is continuous off $a_m$, and its exponential extends
continuously by zero at $a_m$.
\end{lem}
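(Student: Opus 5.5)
We define the suspension by a homogeneous max-product.  Write points of $\Pj^{m+2}$ as $[T:Z:W:Y_1:\cdots:Y_m]$ and set $Y=(Y_1,\ldots,Y_m)$.  Let $\tilde G(T,Z,W)=G([T:Z:W])+\log\norm{(T,Z,W)}_2$ be the homogeneous logarithmic lift of $G$ on $\C^3\setminus\{0\}$, extended by $-\infty$ on the line $\{Z=W=0\}$.  It is psh, satisfies $\tilde G(cX)=\log|c|+\tilde G(X)$, and $\tilde G=-\infty$ exactly on $\C\cdot(1,0,0)$.  Then put
\[
 \mathcal S_m(G)([T:Z:W:Y])=\max\bigl\{\tilde G(T,Z,W),\ \log\norm{Y}_\infty+\log c\bigr\}-\log\norm{(T,Z,W,Y)}_2,
\]
with a harmless constant $c>0$.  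Homogeneity of degree one in $\log|\cdot|$ makes this well defined on $\Pj^{m+2}$, and it is $\omega_{\FS}$-psh because its pullback plus $\log\norm{\cdot}_2$ is a maximum of psh functions.

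The function equals $-\infty$ only where $Y=0$ and $(Z,W)=0$, that is, at $a_m$.  At $[T:Z:W:0]$ with $(Z,W)\neq 0$ and at points with $Y\neq0$ one branch is finite and continuous, so $\mathcal S_m(G)$ is continuous off $a_m$.  Near $a_m$ both branches tend to $-\infty$, using $G\to-\infty$ at $a$, so the exponential extends continuously by zero.  \Cref{lem:isolated-DMA} then gives membership in $DMA(\Pj^{m+2},\omega_{\FS})$, together with agreement off $a_m$ with the local Bedford--Taylor measure.

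\emph{Lelong number.}  In the chart $T=1$ with local potential $g$ of $G$, the local potential of $\mathcal S_m(G)$ is $\max\{g(\zeta)+O(1),\log\norm{y}_\infty+O(1)\}$.  On the slice $y=0$ it restricts to $g$, so restriction to the linear slice gives $\nu\le\nu(G,a)$.  Conversely $\nu(\max\{u,v\})\ge\min\{\nu(u),\nu(v)\}=\min\{\nu(G,a),1\}=\nu(G,a)$ by \cite[Proposition~2.1]{CG09}.

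\emph{Measure.}  Total mass is $1$ by \Cref{def:DMA} and Stokes, so it suffices to show that the measure vanishes on $\Pj^{m+2}\setminus\{a_m\}$; the whole unit mass then sits at $a_m$.  This is the main obstacle.  Off $a_m$ there are three kinds of points.

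\begin{itemize}
\item Where $\log\norm{Y}_\infty$ strictly dominates, $\mathcal S_m(G)$ is locally $\log\norm{Y}_\infty$ minus a potential, a function of $m+1$ homogeneous variables.  Viewed in the $T=1$ chart, or in charts where some $Y_i=1$, its top power vanishes by the rank argument of \Cref{prop:finite-green}.
\item Where $\tilde G$ strictly dominates, the function depends only on $[T:Z:W]$ through a $3$-variable cone, so the $(m+2)$-th power again vanishes for rank reasons.  We use bounded approximants $\max\{V_j,\dots\}$, or simply locality and pullback of bounded potentials from a lower-dimensional target.
\item On the set where the two branches are comparable away from $a_m$, we use B\l{}ocki's theorem in the form of \Cref{lem:local-max-product}, but at a point $(\zeta_0,y_0)\neq 0$.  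In local coordinates $u$ and $v$ are locally bounded psh functions in separate variables whose top measures vanish near the point; for $v=\log\norm{y}$ this holds when $y_0\neq0$.  B\l{}ocki's formula then gives $(\ddc\max\{u,v\})^{m+2}=(\ddc u)^2\otimes(\ddc v)^m=0$ there.  If $y_0=0$, then $\zeta_0\neq0$ and $u$ strictly dominates near the point, which is the previous case.  In the chart $T=0$ the same analysis applies after dehomogenizing in another coordinate.
\end{itemize}

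We expect this case analysis in the $T=0$ hyperplane to be the delicate bookkeeping.  Alternatively, one can apply \Cref{lem:local-max-product} directly at $a_m$ with $m_u=1$ and $m_v=1$, using that the local pieces lie in $\mathcal D$ by isolated singularities.  This identifies the atom as $\delta_{a_m}$, and combined with total mass $1$ it finishes the proof.
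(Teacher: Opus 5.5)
Your proposal is correct. The Lelong-number part matches the paper in substance, but the Monge--Amp\`ere measure is obtained by a genuinely different route.

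\textbf{How the paper does it.} The paper computes the atom at $a_m$ directly. It applies \Cref{lem:local-max-product} at the pole to the unbounded local potentials $u(z)=G([1:z])+\tfrac12\log(1+\norm{z}_2^2)$ and $v(\eta)=\log\norm{\eta}_\infty$. That requires:
\begin{itemize}
\item $\mathcal D$-membership of $u$, $v$ and $w$, via Coman--Guedj--Zeriahi's Proposition~4.6;
\item El Kadiri's plurifine locality, to verify B\l{}ocki's support hypotheses for the truncations $u_T$, $v_T$;
\item the identity $(\ddc v)^m=\delta_0$, via Coman--Guedj's Theorem~2.4.
\end{itemize}
Total mass is then used only to exclude mass away from $a_m$.

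\textbf{How you do it.} You reverse the roles: total mass produces the unit atom, and the work goes into maximality off $a_m$. This is more elementary. In each standard affine chart the local potential is a maximum of functions of complementary coordinate blocks. Away from $a_m$, either one branch strictly dominates, and a rank argument applies, or both branches are locally bounded with vanishing top measures. In the latter case B\l{}ocki's support hypotheses hold trivially after adding a constant, and his formula gives $0$. Your route needs only that $G$ is maximal and locally bounded off $a$. It never needs $(\ddc u)^2=\delta_0$ or $(\ddc v)^m=\delta_0$ at the origin, nor any $\mathcal D$-class or plurifine input. The paper's route, in exchange, gives the local product rule $m_um_v$ at the pole. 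That computation is intrinsically local, independent of global mass bookkeeping, and matches the Li--Xia local suspension.

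\textbf{Lelong number.} Both arguments reduce to $\nu(\max\{u,v\},0)=\min\{\nu(u,0),1\}$. Your restriction to the slice $\{y=0\}$ for the upper bound is an equivalent substitute for the paper's $M_f(r)$ computation. The ``alternative'' you sketch at the end is exactly the paper's proof.

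\textbf{Points to tighten.}
\begin{itemize}
\item Continuity at points $[T:0:0:Y]$ and $[0:0:0:Y]$ with $Y\neq0$ needs more than one branch being finite and continuous. It needs the lift $\tilde G$ to be continuous as a map into $[-\infty,\infty)$, which follows from $G\to-\infty$ at $a$ and $\tilde G\leq\log\norm{\xi}_2+\sup G$. The same continuity of $G$ off $a$ is what makes your strict-dominance sets open.
\item The $T=0$ case is not delicate. In the charts $Z=1$ and $W=1$ the potential is again a maximum in separated variables, and near $[0:0:0:Y]$ the $Y$-branch strictly dominates.
\item In Case~1 the relevant function involves the $m$ coordinates $Y$, not $m+1$. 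This is harmless.
\end{itemize}
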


\begin{proof}
\emph{Plurisubharmonicity and the pole set.}
For $\xi\in\C^3\setminus\{0\}$, set
\[
 \widehat G(\xi)=G([\xi])+\log\norm{\xi}_2,
 \qquad \widehat G(0)=-\infty.
\]
On $\C^3\setminus\{0\}$,
\[
 \ddc\widehat G=\pi_2^*(\omega_{\FS}+\ddc G)\geq0.
\]
Thus $\widehat G$ is psh away from the origin and is bounded above there by
$\log\norm{\xi}_2+\sup_{\Pj^2}G$.  The removable-singularity theorem
therefore makes it psh on $\C^3$.

For $\eta\in\C^m$, with $\log\norm{0}_\infty=-\infty$, put
\[
 W(\xi,\eta)=
 \max\{\widehat G(\xi),\log\norm{\eta}_\infty\}.
\]
Since $W(\tau\xi,\tau\eta)=W(\xi,\eta)+\log|\tau|$, it defines
\begin{equation}\label{eq:projective-suspension}
 \mathcal S_m(G)([\xi:\eta])
 =W(\xi,\eta)-\log\norm{(\xi,\eta)}_2.
\end{equation}
The identity
\[
 \pi_{m+2}^*(\omega_{\FS}+\ddc\mathcal S_m(G))=\ddc W\geq0
\]
proves that $\mathcal S_m(G)$ is $\omega_{\FS}$-psh.  The function
$\widehat G$ is $-\infty$ precisely on the cone over $a$, whereas
$\log\norm{\eta}_\infty=-\infty$ precisely when $\eta=0$.  Hence
$W^{-1}(-\infty)$ projects to the single point $a_m$.

\emph{Continuity.}
Set $\mathcal E_G(\xi)=\norm{\xi}_2\exp G([\xi])$ for $\xi\neq0$ and
$\mathcal E_G(0)=0$.
The estimate
$\mathcal E_G(\xi)\leq\norm{\xi}_2\exp(\sup G)$ controls
$\mathcal E_G$ at the origin.  Now
\[
 \exp\mathcal S_m(G)([\xi:\eta])
 =\frac{\max\{\mathcal E_G(\xi),\norm{\eta}_\infty\}}
 {\norm{(\xi,\eta)}_2}.
\]
Away from the cone over $a$, continuity of $\mathcal E_G$ follows from
that of $G$.
As a nonzero point of that cone is approached, the assumption
$G(z)\to-\infty$ gives $\mathcal E_G(\xi)\to0$, while continuity at the origin follows
from the preceding estimate.  Hence the displayed formula shows that
$\exp\mathcal S_m(G)$ is continuous everywhere after taking the value zero
at $a_m$.  It is positive off $a_m$, so its logarithm
$\mathcal S_m(G)$ is continuous there and hence locally bounded on the
complement of $a_m$.

\emph{The Monge--Amp\`ere measure.}
In the affine chart centered at $a_m$, write
\[
 u(z)=G([1:z])+\frac12\log(1+\norm{z}_2^2),
 \qquad v(\eta)=\log\norm{\eta}_\infty.
\]
The local psh potential in \eqref{eq:projective-suspension} is
$w(z,\eta)=\max\{u(z),v(\eta)\}$.  Each of $u$, $v$, and $w$ is
locally bounded away from the origin and is the affine potential of a
projective psh function.  For $v$, the corresponding projective function is
\[
 [s:\eta]\longmapsto
 \log\norm{\eta}_\infty-\log\norm{(s,\eta)}_2
 \quad\text{on }\Pj^m.
\]
The projective functions corresponding to $u$, $v$, and $w$ are,
respectively, $G$, the function in the preceding display, and
$\mathcal S_m(G)$.  Each is locally bounded off its unique pole and
therefore bounded near a hyperplane avoiding that pole.  Hence
\cite[Proposition~4.6]{CGZ08} places all three in
$DMA_{\mathrm{loc}}$.  Choose sufficiently small polydiscs
\[
 U_0\Subset\C^2,
 \qquad V_0\Subset\C^m
\]
centered at the origin.  After shrinking them if necessary,
\cite[Definition~3.1]{CGZ08} gives
\[
 u\in\mathcal D(U_0),\qquad
 v\in\mathcal D(V_0),\qquad
 w\in\mathcal D(U_0\times V_0).
\]
The inclusion \eqref{eq:DMA-chain} also places the three projective
functions in global $DMA$.

For later use, we record local--global compatibility.  For any one of
these projective functions $\psi$, take the canonical bounded global
approximants
\[
 \psi_\ell=\max\{\psi,-\ell\}\downarrow\psi.
\]
Both $\psi$ and the constant $-\ell$ are $\omega_{\FS}$-psh, so their
maximum is again $\omega_{\FS}$-psh.
If $\rho_{\FS}$ is a local potential on an affine chart $A$, then
\[
 \ddc(\psi_\ell+\rho_{\FS})=\omega_{\FS}+\ddc\psi_\ell.
\]
Thus local $\mathcal D$ convergence and global $DMA$ convergence, tested
against $\chi\in C_c^0(A)$, identify the local Monge--Amp\`ere measure
with the restriction of the global one.

The assumed global equation for $G$ now gives
\[
 (\ddc u)^2=\delta_0.
\]
For $m=1$, the normalization \eqref{eq:ddc} gives directly
$\ddc v=\delta_0$.  For $m\geq2$, the projective function
\[
 [s:\eta]\longmapsto
 \log\norm{\eta}_\infty-\log\norm{(s,\eta)}_2
\]
is the degree-one projection weight of
\cite[Section~2.2.1 and Theorem~2.4]{CG09}, with the bounded
$\omega_{\FS}$-psh target potential
\[
 [\eta]\longmapsto
 \log\norm{\eta}_\infty-\log\norm{\eta}_2
 \quad\text{on }\Pj^{m-1}.
\]
This target potential is bounded and nonpositive, since
$\norm{\eta}_\infty\leq\norm{\eta}_2$.  Hence the hypotheses of
\cite[Theorem~2.4]{CG09} are satisfied.  The degree-one projection has
the single indeterminacy point $[1:0:\cdots:0]$; since the masses in that
theorem sum to one, its global Monge--Amp\`ere measure is
$\delta_{[1:0:\cdots:0]}$.  Local--global compatibility gives in every
case
\[
 (\ddc v)^m=\delta_0.
\]
Applying \Cref{lem:local-max-product} on $U_0\times V_0$, with $p=2$
and $q=m$, gives
\[
 (\ddc w)^{m+2}=\delta_{(0,0)}.
\]

Local--global compatibility shows that the global measure of
$\mathcal S_m(G)$ has an atom of mass one at $a_m$.  This positive
measure has total mass $\int_{\Pj^{m+2}}\omega_{\FS}^{m+2}=1$; hence it
is $\delta_{a_m}$.  This proves the first identity in
\eqref{eq:suspension-conclusions}.

\emph{The Lelong number.}
Let
\[
 M_f(r)=\sup_{\norm{\zeta}_\infty\leq r}f(\zeta),
\]
using the product maximum norm for $w$.  By the Hadamard three-circles
theorem, $t\mapsto M_f(e^t)$ is convex for $t\ll0$.  Its slope at
$-\infty$ therefore exists; the maximum principle together with
\eqref{eq:lelong-liminf} identifies this slope with the Lelong number:
\[
 \nu(f,0)=\lim_{r\downarrow0}\frac{M_f(r)}{\log r}.
\]
Since $M_v(r)=\log r$ and
$M_w(r)=\max\{M_u(r),\log r\}$, division by $\log r<0$ gives
\[
 \nu(w,0)=\min\{\nu(u,0),1\}.
\]
Because $w$ and $u$ are the affine psh potentials of
$\mathcal S_m(G)$ and $G$, respectively, smooth Fubini--Study
potentials do not change their Lelong numbers.  Hence
\[
 \nu(\mathcal S_m(G),a_m)
 =\min\{\nu(G,a),1\}
 =\nu(G,a),
\]
where the last equality follows from
\cite[Proposition~2.1]{CG09}.
\end{proof}

\begin{proof}[Proof of \Cref{thm:main} and \Cref{cor:range}]
For $n=2$, the functions in \eqref{eq:G2lambda} and the endpoint
$G_{2,1}$ have all the properties asserted in \Cref{thm:main}.  If $n>2$
and $0\leq\lambda\leq1$, define
\[
 G_{n,\lambda}=\mathcal S_{n-2}(G_{2,\lambda}).
\]
The continuous extension of $\exp(G_{2,\lambda})$ takes the value zero
at the pole, so $G_{2,\lambda}$ tends to $-\infty$ there, as required in
the suspension lemma.
The projective suspension lemma gives the required equation, pole set,
Lelong number, and continuity properties.  This proves
\Cref{thm:main}.

For \Cref{cor:range}, Lelong numbers are nonnegative, while every positive
closed current in the normalized Fubini--Study class has Lelong number at
most one \cite[Proposition~2.1]{CG09}.  Conversely, let $b\in\Pj^n$ be
prescribed.  Choose a projective unitary transformation $U$ with
$U(b)=[1:0:\cdots:0]$.  Since $U^*\omega_{\FS}=\omega_{\FS}$, the function
$G_{n,\lambda}\circ U$ has pole $b$, Monge--Amp\`ere measure $\delta_b$,
and Lelong number $\lambda$ there.  Thus every value in $[0,1]$ occurs at
$b$, and no other value can occur.
\end{proof}

\section{Seshadri obstructions and finite pullbacks}
\label{sec:finite-pullbacks}

We now turn from projective space to compact K\"ahler manifolds and then
to projective applications.  The Seshadri constant gives an upper bound
for the one-pole Lelong range, but its endpoint need not be attained.

\subsection{The K\"ahler Seshadri bound}\label{subsec:seshadri-bound}

We begin with the general upper bound, using the Seshadri constant and
one-pole range defined in the Introduction.

\begin{prop}[K\"ahler Seshadri bound]\label{prop:seshadri-obstruction}
Let $X$ be a compact K\"ahler manifold, let $\alpha$ be a
volume-normalized K\"ahler class, and let $x\in X$.  Then
\begin{equation}\label{eq:range-upper-bound}
 \mathcal R_\alpha(x)\subset[0,\varepsilon(\alpha,x)].
\end{equation}
\end{prop}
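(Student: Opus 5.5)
Let $G$ be a function in the defining set of $\mathcal R_\alpha(x)$, put $T=\vartheta+\ddc G$ and $c=\nu(G,x)\geq0$. We must show $c\leq\varepsilon(\alpha,x)$, that is, that $\pi_x^*\alpha-c\{E_x\}$ is nef on $\operatorname{Bl}_xX$. The plan is to pass to the blow-up and use Siu's decomposition. First, $\pi_x^*T$ is a positive closed current in $\pi_x^*\alpha$ with generic Lelong number $c$ along $E_x$; this is the standard fact that the Lelong number of a pulled-back psh function along the exceptional divisor equals its Lelong number at the point. Siu's decomposition then gives
\[
 \pi_x^*T=c[E_x]+R,
\]
where $R$ is a positive closed current in the class $\pi_x^*\alpha-c\{E_x\}$.

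Next we check that $R$ has no divisorial part and that its Lelong numbers vanish outside $E_x$. Off $E_x$, $\pi_x$ is a biholomorphism onto $X\setminus\{x\}$, where $G$ is locally bounded, so $\nu(R,y)=0$ for $y\notin E_x$. Along $E_x$ the generic Lelong number of $R$ is $0$. Hence for every $t>0$ the upper level set $E_t(R)$ is a proper analytic subset of $E_x$; in particular it has codimension at least two in the surface case and dimension at most $n-2$ in general.

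For nefness we would use Demailly's criterion: a class containing a positive closed current whose Lelong upper level sets $E_t$ all have dimension at most zero is nef. The remaining work is to handle positive-dimensional $E_t(R)\subset E_x$ when $n\geq3$. The first thing to try is a test-curve argument: for an irreducible curve $C\subset\operatorname{Bl}_xX$ not contained in $E_x$, $R|_C$ is well defined because the potential of $R$ is locally bounded on $C\setminus E_x$. So $(\pi_x^*\alpha-c\{E_x\})\cdot C\geq0$, equivalently $\alpha\cdot\pi_x(C)\geq c\,\operatorname{mult}_x\pi_x(C)$. For curves $C\subset E_x\cong\Pj^{n-1}$, the restriction of $-\{E_x\}$ is $\mathcal O(1)$, which is ample, and $\pi_x^*\alpha$ restricts trivially, so the intersection is $c\deg C\geq0$.

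The main obstacle is that in the Kähler (non-projective) setting, nonnegativity on curves does not characterize nefness. We would therefore instead regularize directly. Using Demailly's regularization, approximate $R$ by currents $R_\delta\geq-\delta\omega$ in the same class, with analytic singularities contained in $E_\delta(R)$. The restriction of $-\{E_x\}$ to $E_x$ is positive, which supplies a smooth form in $-\{E_x\}$ that is positive near $E_x$. Combining this with a local gluing (a regularized max construction near $E_x$) should absorb the singularities and give smooth representatives of $\pi_x^*\alpha-(c-\delta')\{E_x\}$ bounded below by $-\delta\omega$. Letting $\delta\to0$ gives nefness of $\pi_x^*\alpha-c'\{E_x\}$ for all $c'<c$, and closedness of the nef cone then gives the class at $c$ itself.
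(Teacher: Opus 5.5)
\textbf{There is a genuine gap for $n\geq3$.} Your setup is correct. The Siu decomposition $\pi_x^*T=c[E_x]+R$ holds, the Lelong numbers of $R$ vanish off $E_x$ and generically along $E_x$, and hence $E_t(R)\subsetneq E_x$ for $t>0$. For $n=2$ these level sets are finite, so Demailly's criterion already finishes the proof.

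For $n\geq3$ the level sets can have positive-dimensional components inside $E_x$. Already the local model $\max\{2\log|z_1|,2\log|z_2|,\log|z_3|\}$ gives a line in $E_x\cong\Pj^2$. Your last paragraph does not handle this. You rightly discard the test-curve argument, but the replacement is only asserted (``should absorb the singularities''). It is also not explained how a positive form in $-\{E_x\}$ would enter, since the target class $\pi_x^*\alpha-(c-\delta')\{E_x\}=\beta+\delta'\{E_x\}$ differs from $\beta$ by a positive multiple of $\{E_x\}$, which is negative on $E_x$. As written, the proposal proves the bound only for surfaces.

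\textbf{The missing idea} is to remove these singularities before blowing up, by a plain maximum with the isotropic model. Work in coordinates $z$ centered at $x$, and let $\psi_0$ be a local potential of $\vartheta$.
\begin{enumerate}
\item Convexity of $\log r\mapsto\sup_{|z|\leq r}(G+\psi_0)$ gives $G\leq c\log|z|+O(1)$ near $x$.
\item $G$ is bounded below on the compact annulus $\{r_0/2\leq|z|\leq r_0\}$, and $c\log|z|$ is $\vartheta$-psh on the ball because $\vartheta\geq0$. Hence the function $\tilde G=\max\{G,c\log|z|-C\}$ on $\{|z|<r_0\}$, extended by $G$ outside, is $\vartheta$-psh for $C\gg1$, and $\tilde G=c\log|z|+O(1)$ near $x$.
\item On the blow-up, $\log|z\circ\pi_x|$ differs from the logarithm of a local equation of $E_x$ by a bounded term. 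Therefore $\pi_x^*(\vartheta+\ddc\tilde G)=c[E_x]+R'$, where $R'$ has locally bounded potentials everywhere.
\item All Lelong numbers of $R'$ vanish, so Demailly's regularization shows that $\{R'\}=\pi_x^*\alpha-c\{E_x\}$ is nef. No limit $c'\uparrow c$ is needed.
\end{enumerate}
Alternatively, you can keep your $R$ and invoke Paun's theorem that nefness may be tested on the irreducible components of the Lelong level sets. These components lie in $E_x\cong\Pj^{n-1}$, where $\beta$ restricts to $c$ times the hyperplane class, which is nef.

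\textbf{Comparison with the paper.} The paper does none of this. It simply cites formula~(3) of \cite{CG09}, which describes $\varepsilon(\alpha,x)$ as a supremum of Lelong numbers of isolated singularities. The max-trick argument above is a self-contained proof of the inequality that this citation supplies.
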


\begin{proof}
Choose a K\"ahler representative $\vartheta\in\alpha$ as in
\eqref{eq:projective-range}.
Formula~(3) of \cite{CG09} expresses $\varepsilon(\alpha,x)$ as the
supremum of the Lelong numbers of $\alpha$-psh potentials that are
locally bounded on a punctured neighborhood of $x$.  Every function in
$\mathcal R_\alpha(x)$ has this property, which proves
\eqref{eq:range-upper-bound}.
\end{proof}

\subsection{A nonattained endpoint}\label{subsec:nonattained-endpoint}

\begin{proof}[Proof of \Cref{thm:koike-obstruction}]
The proof has three steps.  We first choose a degree-one del Pezzo
surface and a point on a nodal anticanonical curve.  We then compute the
Seshadri constant at that point.  Finally, Koike's theorem identifies the
only positive current in the boundary class on the blow-up.

\emph{The surface and the point.}
Let $C_0$ be an irreducible nodal cubic in $\Pj^2$.  We first verify that
eight smooth points in del Pezzo general position can be chosen on this
fixed curve.

\medskip\noindent
\textbf{General-position lemma.}
\emph{Every irreducible nodal plane cubic contains eight distinct smooth
points in del Pezzo general position.}

To prove the lemma, fix $o\in C_0^{\mathrm{reg}}$, and let
$\tau:\Pj^1\to C_0$ be the normalization, with $p_+$ and $p_-$ the two
points over the node.  The normalization description of the Picard group
\cite[Exercise~II.6.9]{Har77} gives the gluing-scalar isomorphism
\[
 \Gamma:\operatorname{Pic}^0(C_0)\longrightarrow\C^*.
\]
The Abel map
\[
 \mathcal A_o:C_0^{\mathrm{reg}}\longrightarrow
 \operatorname{Pic}^0(C_0),
 \qquad
 \mathcal A_o(y)=\mathcal O_{C_0}(y-o),
\]
is also an isomorphism.  Indeed, choose a coordinate on $\Pj^1$ sending
$p_+$, $p_-$, and $\tau^{-1}(o)$ to $0$, $\infty$, and $1$.  The gluing
scalar of $\mathcal O_{C_0}(y-o)$ is then the coordinate of
$\tau^{-1}(y)$, up to inversion.  Write
\[
 \zeta(y)=\Gamma(\mathcal A_o(y))\in\C^*
\]
and, for $\ell=1,2,3$, set
\[
 \kappa_\ell=
 \Gamma\!\left(
  \mathcal O_{C_0}(\ell)\otimes
  \mathcal O_{C_0}(-3\ell o)
 \right)\in\C^*.
\]
If $D=\sum_\mu m_\mu y_\mu$ is supported on
$C_0^{\mathrm{reg}}$ and has degree $3\ell$, then
\begin{equation}\label{eq:nodal-cubic-group-law}
 \mathcal O_{C_0}(D)\simeq\mathcal O_{C_0}(\ell)
 \quad\Longleftrightarrow\quad
 \prod_\mu\zeta(y_\mu)^{m_\mu}=\kappa_\ell.
\end{equation}

We apply \eqref{eq:nodal-cubic-group-law} to the three incidence
conditions in the del Pezzo general-position criterion.  Three distinct
points $y_i,y_j,y_k$ are collinear exactly when
\[
 \mathcal O_{C_0}(y_i+y_j+y_k)\simeq\mathcal O_{C_0}(1),
 \quad\text{equivalently}\quad
 \zeta(y_i)\zeta(y_j)\zeta(y_k)=\kappa_1.
\]
Likewise, six points indexed by $I\subset\{1,\ldots,8\}$ lie on a conic
exactly when
\[
 \mathcal O_{C_0}\!\left(\sum_{i\in I}y_i\right)
 \simeq\mathcal O_{C_0}(2),
 \quad\text{equivalently}\quad
 \prod_{i\in I}\zeta(y_i)=\kappa_2.
\]
The geometric equivalences follow from the exact sequences
\[
 0\longrightarrow\mathcal O_{\Pj^2}(\ell-3)
 \longrightarrow\mathcal O_{\Pj^2}(\ell)
 \longrightarrow\mathcal O_{C_0}(\ell)
 \longrightarrow0,
 \qquad \ell=1,2,
\]
because the resulting maps on global sections are isomorphisms.

For each $i$, a cubic through all eight points and singular at $y_i$
exists exactly when
\begin{equation}\label{eq:singular-cubic-relation}
 \mathcal O_{C_0}\!\left(2y_i+\sum_{j\ne i}y_j\right)
 \simeq\mathcal O_{C_0}(3),
 \quad\text{equivalently}\quad
 \zeta(y_i)^2\prod_{j\ne i}\zeta(y_j)=\kappa_3.
\end{equation}
One implication follows from B\'ezout's theorem.  Such a cubic is not
$C_0$, because $C_0$ is smooth at $y_i$, and its intersection
multiplicity with $C_0$ is at least two at $y_i$ and at least one at
each of the other seven points.  The total intersection number is nine,
so the intersection divisor is the one in
\eqref{eq:singular-cubic-relation}.

Conversely, suppose the linear equivalence in
\eqref{eq:singular-cubic-relation} holds.  If $F_0$ defines $C_0$, the
exact sequence
\[
 0\longrightarrow\mathcal O_{\Pj^2}
 \xrightarrow{\cdot F_0}\mathcal O_{\Pj^2}(3)
 \longrightarrow\mathcal O_{C_0}(3)\longrightarrow0
\]
shows that a section with the prescribed divisor lifts to a cubic $F$.
Its restriction to $C_0$ vanishes to order at least two at $y_i$, so
$dF(y_i)$ vanishes on $T_{y_i}C_0$.  The $1$-dimensional annihilator of
$T_{y_i}C_0$ is spanned by $dF_0(y_i)$, so
$dF(y_i)=c\,dF_0(y_i)$ for some $c\in\C$.  The cubic
$F-cF_0$ has the same nonzero restriction to $C_0$, passes through all
eight points, and is singular at $y_i$.

After identifying $(C_0^{\mathrm{reg}})^8$ with $(\C^*)^8$, all bad
configurations are therefore contained in the finite union of loci
\[
 \zeta_i=\zeta_j,\qquad
 \prod_{i\in I}\zeta_i=\kappa_1\ (|I|=3),\qquad
 \prod_{i\in I}\zeta_i=\kappa_2\ (|I|=6),\qquad
 \zeta_i^2\prod_{j\ne i}\zeta_j=\kappa_3.
\]
Each is the zero locus of a nonconstant Laurent polynomial, hence a
proper closed subset of the irreducible variety $(\C^*)^8$.  Their finite
union cannot cover the parameter space.  We may therefore choose eight
distinct smooth points $y_1,\ldots,y_8$ outside it.  By the
general-position criterion
\cite[Proposition~8.1.25; see also the definition immediately preceding
it]{Dol12}, these points are in del Pezzo general position.  This proves
the general-position lemma.

Let $\pi:X\to\Pj^2$ be their blow-up.  Then $X$ is a degree-one del
Pezzo surface, and
\[
 \alpha=c_1(-K_X)=3H-\sum_{i=1}^8\{E_i\},
 \qquad \alpha^2=1,
\]
where $H=\pi^*c_1(\mathcal O_{\Pj^2}(1))$ and the $E_i$ are the
exceptional curves.  The strict transform $C_X$ of $C_0$ lies in
$|-K_X|$.

Blowing up smooth points of $C_0$ does not change its strict transform as
an abstract curve, so $\pi|_{C_X}:C_X\to C_0$ is an isomorphism.  We use
the preceding Abel identification also for $C_X$, and denote by $o$ the
corresponding base point.  Since
\[
 \deg N_{C_X/X}=C_X^2=1,
\]
the bundle
\[
 \mathscr N_x:=N_{C_X/X}\otimes\mathcal O_{C_X}(-x)
\]
belongs to $\operatorname{Pic}^0(C_X)$.  More precisely,
\[
 \mathscr N_x=
 \bigl(N_{C_X/X}\otimes\mathcal O_{C_X}(-o)\bigr)
 \otimes\mathcal A_o(x)^{-1}.
\]
Thus $x\mapsto\mathscr N_x$ is the Abel isomorphism followed by inversion
and translation, and is itself an isomorphism
\[
 C_X^{\mathrm{reg}}\longrightarrow\operatorname{Pic}^0(C_X).
\]
This is the degree-one normal-bundle identification used in
\cite[Section~4.2, especially p.~259]{Koi23}.

In the normalization-gluing coordinate
$\Gamma:\operatorname{Pic}^0(C_X)\simeq\C^*$, a degree-zero line bundle
is unitary flat exactly when its gluing scalar has modulus one
\cite[p.~233]{Koi23}.  Thus the unitary-flat locus is the proper real
subgroup $\mathrm U(1)\subset\C^*$.
Its inverse image under $x\mapsto\mathscr N_x$ is a proper real circle in
$C_X^{\mathrm{reg}}\simeq\C^*$.  Together with the finite set
$C_X\cap\bigcup_{i=1}^8E_i$, it cannot cover $C_X^{\mathrm{reg}}$.
We may therefore choose
\[
 x\in C_X^{\mathrm{reg}}\setminus\bigcup_{i=1}^8E_i
\]
so that $\mathscr N_x$ is not unitary flat.  Set $y_9=\pi(x)$.

Let $\sigma:Y\to X$ be the blow-up at $x$, let $E$ be its exceptional
curve, and let $C\subset Y$ be the strict transform of $C_X$.  We keep
$E_i$ for the strict transforms on $Y$ of the first eight exceptional
curves.  Identifying $C$ with $C_X$, the normal-bundle transformation
formula gives
\[
 N_{C/Y}=\mathcal O_Y(C)|_C
 \simeq N_{C_X/X}\otimes\mathcal O_{C_X}(-x)
 =\mathscr N_x,
\]
so $N_{C/Y}$ is not unitary flat.  Moreover,
\begin{equation}\label{eq:anticanonical-nine-blowup}
 \beta:=\sigma^*\alpha-\{E\}=c_1(-K_Y)=\{C\},
 \qquad \beta^2=0.
\end{equation}
\emph{The Seshadri constant.}
The class $\beta$ is nef.  Indeed, $\beta\cdot E=\beta\cdot E_i=1$ for
$1\leq i\leq8$, and $\beta\cdot C=0$.  Any other irreducible curve
$D\subset Y$ is the strict transform of a plane curve $D_0\neq C_0$ of
degree $d$ and
multiplicities $m_i$ at $y_i$, $1\leq i\leq9$.  B\'ezout's theorem gives
\[
 \beta\cdot D=3d-\sum_{i=1}^9m_i\geq0,
\]
because all nine points are smooth on $C_0$ and
$i_{y_i}(C_0,D_0)\geq m_i$.  Hence
$\varepsilon(\alpha,x)\geq1$.  Conversely, whenever
$\sigma^*\alpha-t\{E\}$ is nef,
\[
 0\leq(\sigma^*\alpha-t\{E\})\cdot C
 =\alpha\cdot C_X-t\,\operatorname{mult}_xC_X
 =1-t,
\]
because $C_X\in|-K_X|$ and $x$ is a smooth point of $C_X$.  Thus
$t\leq1$, and consequently $\varepsilon(\alpha,x)=1$.

\emph{Uniqueness in the boundary class.}
The composite $Y\to\Pj^2$ is the blow-up at the nine distinct smooth
points $y_1,\ldots,y_9$.  The divisor $C\in|-K_Y|$ is reduced and is an
irreducible nodal rational curve, hence a cycle of rational curves in
Koike's terminology.  We have proved that $-K_Y$ is nef, and
\[
 (-K_Y)|_C=\mathcal O_Y(C)|_C=N_{C/Y}
\]
is not unitary flat.  Thus every hypothesis of
\cite[Theorem~1.2]{Koi23} is satisfied.  Conditions~\textup{(ii)} and
\textup{(iv)} of that theorem are equivalent.  Koike's ``closed
semi-positive $(1,1)$-currents'' are precisely the positive closed
$(1,1)$-currents in our terminology, and condition~\textup{(iv)} says
that their set in $c_1(-K_Y)$ is not a singleton.  Since $N_{C/Y}$ is
not unitary flat, this set is a singleton.  It contains $[C]$, and hence
equals $\{[C]\}$.

Suppose that $T$ is a positive closed current with $\{T\}=\alpha$, whose
local potentials are locally bounded on $X\setminus\{x\}$, and that
$\nu(T,x)=1$.
We use the standard pullback of a positive closed $(1,1)$-current by the
modification $\sigma$: locally, if $T=\ddc u$ with $u$ psh, then
$\sigma^*T=\ddc(u\circ\sigma)$.  Under the present boundedness assumption,
$u\circ\sigma$ is not identically $-\infty$ on a coordinate
neighborhood, so this defines a positive closed current.
For the blow-up of a point, the proof of
\cite[Proposition~3.1, p.~256]{MX20}, stated for positive
$(1,1)$-currents, gives
\[
 \nu(\sigma^*T,E)=\nu(T,x)=1.
\]
The left-hand side is the generic Lelong number of
$\sigma^*T$ along $E$.
Siu's divisorial decomposition \cite{Siu74}, whose coefficient along a
prime divisor is its generic Lelong number, therefore gives
\[
 \sigma^*T=[E]+R,
 \qquad R\geq0,
 \qquad \{R\}=\sigma^*\alpha-\{E\}=\beta.
\]
The preceding paragraph forces $R=[C]$.  Hence $T=[C_X]$ on
$X\setminus\{x\}$, because $\sigma$ is biholomorphic away from $E$.
At any smooth point of $C_X\setminus\{x\}$, the current $T$ has Lelong
number one, contradicting the local boundedness of its potentials there.
This proves the theorem.
\end{proof}

Coman--Guedj constructed an endpoint Green function when the chosen point
is the ninth base point of the anticanonical pencil
\cite[Section~4.3.2]{CG09}.  The point above is chosen instead so that the
normal bundle is not unitary flat; Koike's equivalence then forces
nonattainment.

Thus the inclusion \eqref{eq:range-upper-bound} can be strict at its
right endpoint, already on a projective surface.  A single Seshadri
constant therefore does not determine the one-pole Lelong range.

The obstruction rules out a universal endpoint theorem.  We next isolate
a functorial mechanism that transfers the projective-space range to
varieties carrying a suitable finite map.

\subsection{Finite pullbacks}\label{subsec:finite-pullbacks}

Throughout this subsection, all complex manifolds are connected.
Let $F:X\to Y$ be a finite surjective holomorphic map between complex
$n$-folds, let $a\in Y$,
and let $x\in F^{-1}(a)$.  Write $e_x(F)$ for the local degree of $F$ at
$x$, defined by the local-algebra length
\[
 e_x(F)=\dim_\C
 \bigl(\mathcal O_{X,x}/\mathfrak m_a\mathcal O_{X,x}\bigr).
\]
This is the standard local multiplicity of a finite holomorphic map;
see \cite[Section~5.1]{AGV85}.
Thus $\sum_{z\in F^{-1}(a)}e_z(F)=\deg F$.
Let $\mathfrak m_a\subset\mathcal O_{Y,a}$ and
$\mathfrak m_x\subset\mathcal O_{X,x}$ denote the corresponding maximal
ideals.  We use
\begin{equation}\label{eq:pullback-ideal-order}
 \ord_x(F^*\mathfrak m_a)
 :=\max\bigl\{q\in\mathbf Z_{\geq0}:
 F^*\mathfrak m_a\cdot\mathcal O_{X,x}\subseteq\mathfrak m_x^q\bigr\}.
\end{equation}
For a finite holomorphic germ
$f=(f_1,\ldots,f_n):(\C^n,0)\to(\C^n,0)$, the corresponding definitions
are
\[
 e_0(f)=\dim_\C
 \mathcal O_{\C^n,0}/(f_1,\ldots,f_n),
 \qquad
 \ord_0(f^*\mathfrak m_0)=\min_i\ord_0f_i.
\]

\par\medskip
\begin{lem}[Generic target rotation]\label{lem:generic-rotation}
Let $f:(\C^n,0)\to(\C^n,0)$ be a finite holomorphic germ, and set
\[
 r=\ord_0(f^*\mathfrak m_0).
\]
For every psh germ $u$ that is locally bounded away from the origin, one
has
\[
 \nu(u\circ U\circ f,0)=r\nu(u,0)
\]
for Haar almost every $U\in\mathrm U(n)$.
\end{lem}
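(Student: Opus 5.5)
The plan is to prove the two inequalities separately. The lower bound $\nu(u\circ U\circ f,0)\geq r\nu(u,0)$ will hold for \emph{every} $U\in\mathrm U(n)$. The reverse inequality will come from an averaging argument over $\mathrm U(n)$ along a single fixed holomorphic disc, closed off with Fatou's lemma. Throughout I use the Euclidean norm in the Lelong-number formula \eqref{eq:lelong-liminf}; this is harmless, since $\norm{\cdot}_2$ and $\norm{\cdot}_\infty$ are comparable and the ratio of logarithms tends to one.

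\emph{Lower bound.} Put $\nu=\nu(u,0)$. The standard estimate $u(\zeta)\leq\nu\log\norm{\zeta}_2+C$ holds near $0$ (e.g.\ by convexity of $\log r\mapsto\sup_{\norm{\zeta}\leq r}u$, as in the proof of \Cref{lem:projective-suspension}). Since every component of $f$ vanishes to order at least $r$, we have $\norm{f(z)}_2\leq C'\norm{z}_2^{\,r}$. Unitary maps preserve $\norm{\cdot}_2$, so
\[
 u(Uf(z))\leq r\nu\log\norm{z}_2+C''
\]
with $C''$ independent of $U$. This gives $\nu(u\circ U\circ f,0)\geq r\nu$ for all $U$, and the constant is uniform in $U$, which matters below.

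\emph{Upper bound for almost every $U$.} Write $f=f_r+O(\norm{z}^{r+1})$, where $f_r\not\equiv0$ is the homogeneous part of degree $r$. Fix a unit vector $v$ with $f_r(v)\neq0$, and set $\gamma(t)=f(tv)$. Then $\norm{\gamma(t)}_2=|t|^r\bigl(\norm{f_r(v)}_2+O(|t|)\bigr)$, so $\log\norm{\gamma(t)}_2=r\log|t|+O(1)$. For $0<t<1$ small define
\[
 h_U(t)=\frac{u(U\gamma(t))}{\log t}.
\]
The lower-bound step gives $h_U(t)\geq r\nu-C/|\log t|$ uniformly in $U$.

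Haar measure on $\mathrm U(n)$ pushes forward to normalized surface measure on each sphere. Hence
\[
 \int_{\mathrm U(n)}u(U\gamma(t))\,dU=\lambda_u\bigl(\norm{\gamma(t)}_2\bigr),
\]
where $\lambda_u(\rho)$ is the spherical mean of $u$ at radius $\rho$. It is classical that $\lambda_u(\rho)/\log\rho\to\nu$ as $\rho\to0$ (Lelong number via spherical means, e.g.\ Demailly). Therefore $\int h_U(t)\,dU\to r\nu$.

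Now fix a sequence $t_k\downarrow0$ and apply Fatou's lemma to the nonnegative functions $h_U(t_k)-r\nu+C/|\log t_k|$:
\[
 \int_{\mathrm U(n)}\liminf_k\bigl(h_U(t_k)-r\nu\bigr)\,dU\leq\liminf_k\int\bigl(h_U(t_k)-r\nu\bigr)\,dU=0.
\]
The integrand is nonnegative, so $\liminf_k h_U(t_k)=r\nu$ for almost every $U$. The points $t_kv$ tend to $0$ and $\log\norm{t_kv}=\log t_k$, so by \eqref{eq:lelong-liminf}
\[
 \nu(u\circ U\circ f,0)\leq\liminf_k h_U(t_k)=r\nu
\]
for those $U$. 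Combined with the lower bound, this proves the lemma.

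\emph{Expected obstacles.} The main point to check carefully is that the averaging argument is legitimate. First, $U\mapsto u(U\gamma(t))$ must be measurable and integrable; it is upper semicontinuous and bounded above. Second, $u$ must be finite on the spheres in question, so that the means are finite; this is where the hypothesis that $u$ is locally bounded away from $0$ enters. Third, the spherical-mean characterization of the Lelong number must be quoted in the normalization \eqref{eq:lelong-liminf}. Uniformity of the lower-bound constant in $U$ is what makes Fatou applicable. Finiteness of the germ $f$ is used only to ensure that $u\circ U\circ f$ is a psh germ that is not identically $-\infty$, and that the chosen disc $t\mapsto tv$ is not mapped into the pole.
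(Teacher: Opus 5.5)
Your proof is correct and follows the same route as the paper. It gets the lower bound $\nu(u\circ U\circ f,0)\geq r\nu(u,0)$ for every $U$ from $\norm{f(z)}_2\leq C\norm{z}_2^{\,r}$ and unitary invariance, and gets the almost-everywhere upper bound by Haar averaging along $\gamma(t)=f(tv)$ with $f_r(v)\neq0$, using spherical means and Fatou's lemma. The only difference is minor: you use pointwise values at $t_kv$ together with the liminf formula \eqref{eq:lelong-liminf}, whereas the paper averages over circles $\tau e^{\mathrm i\theta}$ and passes through the Lelong number $\ell_U$ of the restriction to $\C v$.
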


\begin{proof}
Write $f=f_r+O(\norm{z}_2^{r+1})$, where $f_r$ is the vector of
degree-$r$ homogeneous terms and is not identically zero.  Choose a unit
vector $v$ such that $q=f_r(v)\ne0$.  Then
\[
 \gamma(t):=f(tv)=t^rq+O(t^{r+1}).
\]
After shrinking representatives, finiteness gives $f^{-1}(0)=\{0\}$.
Thus $t\mapsto u(Uf(tv))$ is not identically $-\infty$ for any unitary
$U$.
After adding a constant and shrinking the target ball, assume that
$u\leq0$.  For $U\in\mathrm U(n)$ and $0<\tau\ll1$, put
\[
 b_U(\tau)=\frac{1}{\log\tau}\frac{1}{2\pi}
 \int_0^{2\pi}u\bigl(U\gamma(\tau e^{\mathrm i\theta})\bigr)
 \,d\theta.
\]
These quantities are nonnegative.  Their limit as $\tau\downarrow0$ is
the Lelong number $\ell_U$ of the one-variable subharmonic germ
$t\mapsto u(Uf(tv))$.

Let $A_u(s)$ be the spherical mean of $u$ on the Euclidean sphere of
radius $s$.
With normalized Haar measure, invariance gives
\[
 \int_{\mathrm U(n)}b_U(\tau)\,dU
 =\frac{1}{2\pi\log\tau}\int_0^{2\pi}
 A_u\bigl(\norm{\gamma(\tau e^{\mathrm i\theta})}_2\bigr)\,d\theta.
\]
Uniformly in $\theta$, the radii
$s_{\tau,\theta}=\norm{\gamma(\tau e^{\mathrm i\theta})}_2$ satisfy,
for constants $0<C_-\leq C_+$ independent of $\tau$ and $\theta$,
\[
 C_-\tau^r\leq s_{\tau,\theta}\leq C_+\tau^r,
 \qquad
 \frac{\log s_{\tau,\theta}}{\log\tau}\longrightarrow r.
\]
For these radii,
\[
 \frac{A_u(s_{\tau,\theta})}{\log\tau}
 =\frac{A_u(s_{\tau,\theta})}{\log s_{\tau,\theta}}
  \frac{\log s_{\tau,\theta}}{\log\tau}.
\]
The spherical-mean characterization makes the first factor tend to
$\nu(u,0)$ uniformly once all radii are sufficiently small, while the
second tends to $r$ uniformly in $\theta$.  Hence the right-hand side
tends to $r\nu(u,0)$.
Fatou's lemma now yields
\[
 \int_{\mathrm U(n)}\ell_U\,dU\leq r\nu(u,0).
\]
If $\nu(u,0)>0$, fix $0<\eta<\nu(u,0)$.  The logarithmic growth
characterization and a bound
$\norm{f(z)}_2\leq C_f\norm{z}_2^r$ for some $C_f>0$ give, for every
$U$,
\[
 u(Uf(z))\leq r\eta\log\norm{z}_2+O(1).
\]
Division by $\log\norm{z}_2<0$ reverses the inequality.  Letting
$\eta\uparrow\nu(u,0)$ yields
$\nu(u\circ U\circ f,0)\geq r\nu(u,0)$; when $\nu(u,0)=0$, the same
bound is simply nonnegativity.  Since restriction to $\C v$ can only
increase the Lelong number,
\[
 r\nu(u,0)\leq\nu(u\circ U\circ f,0)\leq\ell_U
 \qquad\text{for every }U.
\]
Together with the integral upper bound, this forces
$\ell_U=r\nu(u,0)$ almost everywhere, and the middle term has the same
value.
\end{proof}

\par\medskip
Transformation laws for pluricomplex Green functions with weighted poles
under proper holomorphic maps between domains were established by
Edigarian and Zwonek \cite{EZ98}.  The next lemma is a Bedford--Taylor
functoriality statement tailored to the compact global $DMA$ setting used
below.

\begin{lem}[Finite pullback of Bedford--Taylor measures]
\label{lem:finite-BT-pullback}
Let $F:X\to Y$ be a finite surjective holomorphic map of degree $d$
between connected complex manifolds of the same dimension $n$.  For a
positive Radon measure $\mu$ on $Y$, define $F^*\mu$ by requiring, for every
$\chi\in C_c^0(X)$,
\begin{equation}\label{eq:measure-pullback}
 \int_X\chi\,d(F^*\mu)
 =\int_Y\operatorname{Tr}_F\chi\,d\mu,
 \qquad
 \operatorname{Tr}_F\chi(y)
 =\sum_{z\in F^{-1}(y)}e_z(F)\chi(z).
\end{equation}
Then $\operatorname{Tr}_F\chi$ is continuous for every
$\chi\in C_c^0(X)$.  If $T$ is a positive closed $(1,1)$-current on
$Y$ with bounded local potentials, write $T=\ddc\phi$ locally and set
\[
 F^*T:=\ddc(\phi\circ F).
\]
This is independent of the chosen local potential.  Then
\begin{equation}\label{eq:bounded-finite-functoriality}
 (F^*T)^n=F^*(T^n).
\end{equation}
Consequently, if $(\phi_j)$ is a sequence of bounded psh functions on a
coordinate ball $B\subset Y$ that decreases to a possibly unbounded psh
function $\phi$, and if
\[
 (\ddc\phi_j)^n\rightharpoonup\mu
 \qquad\text{locally on }B,
\]
then
\[
 \bigl(\ddc(\phi_j\circ F)\bigr)^n
 \rightharpoonup F^*\mu
\]
locally on $F^{-1}(B)$.
\end{lem}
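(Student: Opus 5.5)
The proof has three parts: continuity of the trace, the bounded functoriality identity, and then the decreasing-limit statement.

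\emph{Continuity of the trace.} This is local on $Y$. Fix $y_0\in Y$. Finiteness gives disjoint neighborhoods $W_z$ of the points $z\in F^{-1}(y_0)$ and a neighborhood $B$ of $y_0$ with $F^{-1}(B)\subset\bigcup_z W_z$. On each piece, $F|_{W_z\cap F^{-1}(B)}\to B$ is a finite branched covering of degree $e_z(F)$. This is the conservation of multiplicity, i.e.\ the sum formula already recorded, applied to the germ at $z$. Continuity of $\operatorname{Tr}_F\chi$ at $y_0$ then follows from uniform continuity of $\chi$. Indeed, for $y$ near $y_0$, all preimages in $W_z$, counted with multiplicity, lie close to $z$, and their multiplicities sum to $e_z(F)$. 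So the trace is within $\varepsilon\,d$ of $\operatorname{Tr}_F\chi(y_0)$.

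\emph{Bounded functoriality.} I would first check that $F^*T$ is independent of the potential. Two potentials differ by a pluriharmonic function $h$, and $h\circ F$ is pluriharmonic. Next, assume $\phi$ is smooth. Let $R\subset X$ be the critical locus and $F(R)$ the branch locus; both are proper analytic subsets. Off $R$, $F$ is a local biholomorphism, so $(\ddc(\phi\circ F))^n=F^*((\ddc\phi)^n)$ pointwise. Integrating $\chi$ and using that $e_z=1$ off $R$ gives \eqref{eq:bounded-finite-functoriality} away from $R$. Neither side charges $R$: the left side is smooth, and the right side gives $R$ mass at most $d\,\mu(F(R))=0$ since $\mu$ is smooth. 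For bounded $\phi$, take smooth psh $\phi_k\downarrow\phi$ locally. Then $\phi_k\circ F\downarrow\phi\circ F$ is a bounded decreasing sequence. Bedford--Taylor continuity on $X$ and on $Y$, together with continuity of $\operatorname{Tr}_F\chi$, lets both sides pass to the limit:
\[
\int\chi\,(\ddc\phi_k)^n\!\circ\text{pullback}=\int\operatorname{Tr}_F\chi\,(\ddc\phi_k)^n\to\int\operatorname{Tr}_F\chi\,(\ddc\phi)^n.
\]
Locality in $B$ is handled by testing with $\chi$ supported in $F^{-1}(B)$. Then $\operatorname{Tr}_F\chi\in C_c^0(B)$, because $F$ is proper.

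\emph{Decreasing limit.} For $\chi\in C_c^0(F^{-1}(B))$, the bounded identity applied to each $\phi_j$ gives
\[
\int\chi\,(\ddc(\phi_j\circ F))^n=\int_B\operatorname{Tr}_F\chi\,(\ddc\phi_j)^n .
\]
Since $\operatorname{Tr}_F\chi\in C_c^0(B)$, the right side tends to $\int\operatorname{Tr}_F\chi\,d\mu$, which is $\int\chi\,d(F^*\mu)$ by definition.

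\emph{Main obstacle.} I expect the main difficulty to be the first step: the continuity of the trace with multiplicities, specifically justifying that the local degree of $F$ on each small neighborhood $W_z$ equals the algebraic length $e_z(F)$. I would cite the standard fact that for a finite germ the algebraic multiplicity equals the generic number of preimages, as in \cite{AGV85}. Everything else is then routine. A secondary point is showing that the smooth-case identity does not charge the critical set $R$. This follows because $F(R)$ has Lebesgue measure zero and the measures involved are absolutely continuous.
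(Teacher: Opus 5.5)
Your proposal is correct and follows essentially the same route as the paper. You split $F^{-1}(B)$ into pieces over which the multiplicities sum to $e_z(F)$ to get continuity of $\operatorname{Tr}_F\chi$; you check the smooth identity off the ramification locus with neither side charging $R_F$; and you pass to bounded $\phi$ and then to the decreasing limit by convolution regularization, Bedford--Taylor continuity, and testing against $\operatorname{Tr}_F\chi\in C_c^0$. The only difference is that the paper obtains the constancy of the multiplicity sums from miracle flatness (each piece is finite locally free of rank $e_{z_i}(F)$), whereas you invoke conservation of multiplicity directly.
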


\begin{proof}
For $z\in X$, the finite local homomorphism
\[
 \mathcal O_{Y,F(z)}\longrightarrow\mathcal O_{X,z}
\]
has regular source and regular, hence Cohen--Macaulay, target.  Moreover,
\[
 \dim\mathcal O_{X,z}
 =\dim\mathcal O_{Y,F(z)}
  +\dim\bigl(\mathcal O_{X,z}/
      \mathfrak m_{F(z)}\mathcal O_{X,z}\bigr)
 =n+0.
\]
Miracle flatness \cite[Theorem~23.1]{Mat86} therefore makes $F$ flat.
Thus $F$ is finite locally free of rank $d$, every scheme-theoretic
fiber has length $d$, and its local lengths are the multiplicities
$e_z(F)$.

Fix $y_0\in Y$ and write $F^{-1}(y_0)=\{z_1,\ldots,z_s\}$.  Choose
pairwise disjoint relatively compact neighborhoods $W_i$ of $z_i$ on
which $\chi$ differs from $\chi(z_i)$ by less than a prescribed
$\epsilon>0$.  After shrinking to a connected neighborhood $B$ of
$y_0$, properness gives a disjoint decomposition
\[
 F^{-1}(B)=V_1\sqcup\cdots\sqcup V_s,
 \qquad z_i\in V_i\Subset W_i.
\]
Each $V_i\to B$ is finite locally free.  Its rank is constant and,
on evaluating the fiber over $y_0$, equals $e_{z_i}(F)$.  Hence
\[
 \sum_{z\in F^{-1}(y)\cap V_i}e_z(F)=e_{z_i}(F),
 \qquad y\in B.
\]
This constancy is the finite-type condition used in the trace theorem of
\cite[Section~5.16]{AGV85}; the elementary estimate below gives the
continuity needed here for arbitrary $\chi\in C_c^0(X)$.
It follows that
$|\operatorname{Tr}_F\chi(y)-\operatorname{Tr}_F\chi(y_0)|\leq d\epsilon$
for $y\in B$, proving continuity.  Moreover,
\[
 \operatorname{supp}(\operatorname{Tr}_F\chi)
 \subset F(\operatorname{supp}\chi),
\]
which is compact because $F$ is proper.
Also
\[
 |\operatorname{Tr}_F\chi|\leq d\norm{\chi}_\infty.
\]
Consequently,
$\chi\mapsto\int_Y\operatorname{Tr}_F\chi\,d\mu$ is a positive linear
functional on $C_c^0(X)$ that is bounded on functions supported in each
fixed compact set.  The Riesz--Markov representation theorem therefore
defines a unique positive Radon measure $F^*\mu$ satisfying
\eqref{eq:measure-pullback}.

The identity \eqref{eq:bounded-finite-functoriality} is local on $Y$.
Write $T=\ddc\phi$ on a coordinate ball $B$, with $\phi$ bounded and
psh.  On a smaller ball $B'\Subset B$, standard convolution
regularization \cite[Chapter~III, Proposition~3.2]{Dem12} gives smooth
psh functions $\phi^{(\nu)}\downarrow\phi$.  For
$T^{(\nu)}=\ddc\phi^{(\nu)}$, the
ramification locus and branch locus are
\[
 R_F=\{z\in X:\operatorname{rank}dF_z<n\},
 \qquad B_F=F(R_F).
\]
Because $F$ is a finite dominant map between smooth complex manifolds in
characteristic zero, it is generically \'{e}tale.  Thus $R_F$ is a proper
analytic subset.  Remmert's theorem makes $B_F=F(R_F)$ analytic; since a
finite map preserves the dimension of analytic subsets,
\[
 \dim B_F=\dim R_F\leq n-1.
\]
Hence $B_F$ is a proper closed analytic subset.

The identity
$(F^*T^{(\nu)})^n=F^*((T^{(\nu)})^n)$ holds on
$F^{-1}(B')\setminus R_F$ by the ordinary smooth pullback formula.  The
left-hand side has a smooth density and hence gives no mass to $R_F$.
The right-hand side can charge $R_F$ only over $B_F$, which has zero mass
for the smooth measure $(T^{(\nu)})^n$.  Thus the identity holds on all
of $F^{-1}(B')$.

Bedford--Taylor monotone continuity gives convergence on the left.  On
the right, for $\chi\in C_c^0(F^{-1}(B'))$, the trace formula gives
\[
 \int_{F^{-1}(B')}\chi\,dF^*((T^{(\nu)})^n)
 =\int_{B'}\operatorname{Tr}_F\chi\,d(T^{(\nu)})^n
 \longrightarrow
 \int_{B'}\operatorname{Tr}_F\chi\,dT^n.
\]
This proves \eqref{eq:bounded-finite-functoriality}.  Applying it to each
bounded $\phi_j$ in the final assertion and testing with
$\chi\in C_c^0(F^{-1}(B))$ gives
\[
 \int_{F^{-1}(B)}\chi\,\bigl(\ddc(\phi_j\circ F)\bigr)^n
 \longrightarrow
 \int_B\operatorname{Tr}_F\chi\,d\mu
 =\int_{F^{-1}(B)}\chi\,d(F^*\mu).
\]
This is the stated local convergence.  Any atom over a branch value is
already counted by the local multiplicities in $F^*\mu$; no additional
ramification term occurs.
\end{proof}

\par\medskip
\begin{prop}[Finite-pullback criterion]\label{prop:finite-pullback}
Let $n\geq2$, let $a=[1:0:\cdots:0]\in\Pj^n$, and let
$F:X\to\Pj^n$ be a finite surjective morphism of degree $d$ from a
smooth projective $n$-fold.  Put $c=d^{-1/n}$, let $\vartheta$ be a
K\"ahler form in the class $cF^*\{\omega_{\FS}\}$, and choose
$h\in C^\infty(X)$ such that
\[
 \vartheta=cF^*\omega_{\FS}+\ddc h.
\]
For $\lambda\in[0,1]$, set
\[
 U_\lambda=cG_{n,\lambda}\circ F-h.
\]
Then $U_\lambda\in\PSH(X,\vartheta)\cap DMA(X,\vartheta)$, its pole set
is $F^{-1}(a)$, and
\[
 (\vartheta+\ddc U_\lambda)^n
 =\frac1d\sum_{z\in F^{-1}(a)}e_z(F)\,\delta_z.
\]

Suppose in addition that $F^{-1}(a)=\{x\}$ set-theoretically, and put
\[
 r=\ord_x(F^*\mathfrak m_a).
\]
For every $\lambda\in[0,1]$, there is a projective unitary transformation
$R_\lambda$ fixing $a$ such that
\[
 W_\lambda=cG_{n,\lambda}\circ R_\lambda\circ F-h
\]
is a one-pole Green function at $x$ with
\[
 \nu(W_\lambda,x)=cr\lambda.
\]
Consequently,
\[
 [0,cr]\subseteq\mathcal R_{cF^*\{\omega_{\FS}\}}(x),
 \qquad
 \varepsilon(cF^*\{\omega_{\FS}\},x)\geq cr.
\]
If an irreducible curve $C\ni x$ satisfies
\[
 F^*\mathcal O_{\Pj^n}(1)\cdot C
 =r\,\operatorname{mult}_xC,
\]
then
\[
 \mathcal R_{cF^*\{\omega_{\FS}\}}(x)
 =[0,cr]
 =[0,\varepsilon(cF^*\{\omega_{\FS}\},x)].
\]
\end{prop}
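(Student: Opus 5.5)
We prove the three parts in turn: the measure identity for $U_\lambda$, the one-pole Lelong-number statement, and the equality with the Seshadri interval.

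\emph{Step 1: membership and measure.} We first check $\vartheta+\ddc U_\lambda=cF^*(\omega_{\FS}+\ddc G_{n,\lambda})\geq0$. Here the pullback is the local one, $\ddc(\phi\circ F)$, where $\phi$ is a local potential. Hence $U_\lambda$ is $\vartheta$-psh. Its pole set is $F^{-1}(G_{n,\lambda}^{-1}(-\infty))=F^{-1}(a)$, which is finite since $F$ is finite. Away from this set, $U_\lambda$ is continuous, by \Cref{thm:main}, and hence locally bounded. The finite-set clause of \Cref{lem:isolated-DMA} on the projective manifold $X$ then gives $U_\lambda\in DMA(X,\vartheta)$, with measure the limit along any bounded decreasing approximation. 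We take $V_\ell=\max\{G_{n,\lambda},-\ell\}\downarrow G_{n,\lambda}$ and the approximants $cV_\ell\circ F-h$.

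In a coordinate ball $B\ni a$, write $\omega_{\FS}=\ddc\rho$, so that $\phi_\ell=\rho+V_\ell$ decreases to $\phi=\rho+G_{n,\lambda}$. Since $G_{n,\lambda}\in DMA$, the measures $(\ddc\phi_\ell)^n$ converge to $\delta_a$ on $B$, and to the global measure elsewhere. \Cref{lem:finite-BT-pullback} gives $(\ddc(\phi_\ell\circ F))^n\rightharpoonup F^*\mu$ on $F^{-1}(B)$. Multiplying by $c^n=1/d$ and using $F^*\delta_a=\sum_{z\in F^{-1}(a)}e_z(F)\delta_z$, which follows from \eqref{eq:measure-pullback}, we obtain the formula. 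Away from $F^{-1}(B)$ the target measure vanishes, so the pulled-back measure vanishes there too. The total mass is $\frac1d\sum_z e_z(F)=1$, which is consistent.

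\emph{Step 2: one pole and its Lelong number.} If $F^{-1}(a)=\{x\}$, then $e_x(F)=d$ and the measure is $\delta_x$. The same holds for $G_{n,\lambda}\circ R$ whenever $R$ is projective unitary fixing $a$, because $R^*\omega_{\FS}=\omega_{\FS}$ and $R$ preserves the pole. We work in affine charts centred at $a$ and at $x$. There $F$ is a finite germ $f$ with $\ord_0(f^*\mathfrak m_0)=r$, and the affine potential $u$ of $G_{n,\lambda}$ is locally bounded off $0$ with $\nu(u,0)=\lambda$. Unitary matrices $U\in\mathrm U(n)$ act on this chart as projective unitaries fixing $a$, up to the smooth potential $\tfrac12\log(1+\norm{z}^2)$, which is $U$-invariant. \Cref{lem:generic-rotation} gives $\nu(u\circ U\circ f,0)=r\lambda$ for almost every $U$; we choose one such $U$ as $R_\lambda$. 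The smooth terms $h$ and the pulled-back local potential of $\omega_{\FS}$ do not change Lelong numbers, so $\nu(W_\lambda,x)=cr\lambda$. As $\lambda$ ranges over $[0,1]$ this gives $[0,cr]\subseteq\mathcal R$, and \Cref{prop:seshadri-obstruction} gives $\varepsilon\geq cr$. Since $\mathcal R$ depends only on the class, the choice of $\vartheta$ is harmless.

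\emph{Step 3: equality.} Nefness of $\pi_x^*\alpha-t\{E_x\}$ with $\alpha=cF^*\{\omega_{\FS}\}$, tested on the strict transform $\tilde C$ of the curve $C$, gives
\[
 0\leq c\,F^*\mathcal O(1)\cdot C-t\operatorname{mult}_xC=(cr-t)\operatorname{mult}_xC .
\]
Hence $t\leq cr$, so $\varepsilon\leq cr$. Combining this with \Cref{prop:seshadri-obstruction} and Step 2 gives $[0,cr]\subseteq\mathcal R\subseteq[0,\varepsilon]\subseteq[0,cr]$, so all three intervals coincide.

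The main obstacle is Step 1, specifically the atom at a ramified point. This is handled entirely by the trace definition of $F^*\mu$ in \Cref{lem:finite-BT-pullback}, combined with the independence of the approximation guaranteed by \Cref{lem:isolated-DMA}.
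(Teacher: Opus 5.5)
Your proof is correct. Steps 1 and 2 match the paper: you pull back the bounded approximants through the trace construction of \Cref{lem:finite-BT-pullback}, use \Cref{lem:isolated-DMA} for membership and independence of the approximation, and use \Cref{lem:generic-rotation} to choose $R_\lambda$.

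The genuine difference is how you obtain $\varepsilon(cF^*\{\omega_{\FS}\},x)\geq cr$. The paper proves the algebraic inequality $\varepsilon(F^*\mathcal O_{\Pj^n}(1),x)\geq r$ directly. For each irreducible curve $C\ni x$ it picks a hyperplane $H\ni a$ not containing $F(C)$. A local equation of $F^*H$ lies in $\mathfrak m_x^r$, so $F^*\mathcal O_{\Pj^n}(1)\cdot C\geq i_x(F^*H,C)\geq r\operatorname{mult}_xC$, and the curve characterization of Seshadri constants finishes. You instead note that once $W_1$ realizes $cr\in\mathcal R_{cF^*\{\omega_{\FS}\}}(x)$, the analytic bound of \Cref{prop:seshadri-obstruction} forces $cr\leq\varepsilon$.

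Your route is shorter. It makes the whole ``Consequently'' clause a formal consequence of the realized value (only $\lambda=1$ is needed) together with the universal bound. The price is that the inequality now depends on the Green-function construction and on the Coman--Guedj characterization of $\varepsilon$ behind \Cref{prop:seshadri-obstruction}. The paper's argument instead gives $\varepsilon(F^*\mathcal O_{\Pj^n}(1),x)\geq\ord_x(F^*\mathfrak m_a)$ as a standalone intersection-theoretic fact about finite morphisms, independent of any potential theory.

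For the sharpness clause, you test nefness on the strict transform of $C$. That is just the easy direction of the same curve characterization the paper invokes, so this part coincides.
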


\begin{proof}[Proof of \Cref{prop:finite-pullback}]
The class $cF^*\{\omega_{\FS}\}$ is K\"ahler because a finite pullback
of an ample line bundle is ample, and its volume is $c^nd=1$.
Moreover,
\[
 \vartheta+\ddc U_\lambda
 =cF^*(\omega_{\FS}+\ddc G_{n,\lambda})\geq0,
\]
so $U_\lambda$ is $\vartheta$-psh.
Let $G_{n,\lambda}^{(\ell)}\downarrow G_{n,\lambda}$ be bounded
$\omega_{\FS}$-psh functions and set
$U_\lambda^{(\ell)}=cG_{n,\lambda}^{(\ell)}\circ F-h$.  By
\Cref{lem:finite-BT-pullback},
\[
 (\vartheta+\ddc U_\lambda^{(\ell)})^n
 =\frac1dF^*\bigl((\omega_{\FS}+\ddc G_{n,\lambda}^{(\ell)})^n\bigr).
\]
Set
\[
 \mu_\ell=(\omega_{\FS}+\ddc G_{n,\lambda}^{(\ell)})^n.
\]
Since $G_{n,\lambda}\in DMA(\Pj^n,\omega_{\FS})$,
$\mu_\ell\rightharpoonup\delta_a$.  Hence, for every $\chi\in C^0(X)$,
\[
 \int_X\chi\,(\vartheta+\ddc U_\lambda^{(\ell)})^n
 =\frac1d\int_{\Pj^n}\operatorname{Tr}_F\chi\,d\mu_\ell
 \longrightarrow
 \frac1d\sum_{z\in F^{-1}(a)}e_z(F)\chi(z).
\]
Equivalently,
\begin{equation}\label{eq:finite-pullback-measure}
 (\vartheta+\ddc U_\lambda^{(\ell)})^n\rightharpoonup
 \frac1d\sum_{z\in F^{-1}(a)}e_z(F)\delta_z.
\end{equation}
The potential $U_\lambda$ is locally bounded away from the finite fiber
$F^{-1}(a)$.  By \Cref{lem:isolated-DMA}, it belongs to
$DMA(X,\vartheta)$, so \eqref{eq:finite-pullback-measure} is its
classical Monge--Amp\`ere measure.  Its pole set is $F^{-1}(a)$, and
\[
 \exp(U_\lambda)=e^{-h}
 \bigl(\exp(G_{n,\lambda})\circ F\bigr)^c
\]
extends continuously by zero along that fiber.

The same argument applies with $G_{n,\lambda}$ replaced by
$G_{n,\lambda}\circ R$ whenever $R$ is projective unitary and fixes
$a$.

Assume $F^{-1}(a)=\{x\}$, set
$\mathscr L=F^*\mathcal O_{\Pj^n}(1)$, and let
$r=\ord_x(F^*\mathfrak m_a)$.  For every irreducible curve $C\ni x$,
choose a hyperplane $H\ni a$ that does not contain $F(C)$.  A local
equation of $F^*H$ lies in $\mathfrak m_x^r$, and hence
\[
 \mathscr L\cdot C\geq i_x(F^*H,C)
 \geq r\,\operatorname{mult}_xC.
\]
For the second inequality, pull the local equation back to the
normalization of $C$: every element of $\mathfrak m_x^r$ then vanishes
to order at least $r\operatorname{mult}_xC$.  The curve characterization
of the Seshadri constant \cite[Proposition~5.1.5]{Laz04} gives
\begin{equation}\label{eq:seshadri-finite-lower}
 \varepsilon(c_1(\mathscr L),x)\geq r.
\end{equation}

Choose local coordinates $\zeta$ centered at $x$.  In the target affine
chart $[1:z]$ centered at $a$, put
\[
 \chi_{\FS}(z)=\frac12\log(1+\norm{z}_2^2),
 \qquad
 g_\lambda(z)=G_{n,\lambda}([1:z])+\chi_{\FS}(z).
\]
For $U\in\mathrm U(n)$, let $R_U$ be the projective unitary
transformation induced by $\operatorname{diag}(1,U)\in\mathrm U(n+1)$.
Then $R_U$ fixes $a$, induces $z\mapsto Uz$, and satisfies
\[
 \chi_{\FS}(Uz)=\chi_{\FS}(z),
 \qquad R_U^*\omega_{\FS}=\omega_{\FS}.
\]
Let $f(\zeta)$ be the target affine-coordinate expression of the germ of
$F$ in these source coordinates.  Then
\[
 g_\lambda\circ U\circ f
 =G_{n,\lambda}\circ R_U\circ F+\chi_{\FS}\circ f.
\]
The last term is smooth.  The germ $g_\lambda$ is psh, locally bounded
away from the origin, and has Lelong number $\lambda$.  Applying
\Cref{lem:generic-rotation} to $g_\lambda$ and the finite germ $f$, we
may therefore choose a projective unitary transformation $R_\lambda$
fixing $a$ so that
\[
 \nu(G_{n,\lambda}\circ R_\lambda\circ F,x)=r\lambda.
\]
Because $F$ is finite locally free of rank $d$, the fiber supported at
the single point $x$ has length $e_x(F)=d$.  The preceding pullback
calculation therefore gives
\[
 (\vartheta+\ddc W_\lambda)^n=\delta_x,
 \qquad
 \nu(W_\lambda,x)=cr\lambda.
\]
Thus $[0,cr]$ is realized.  By homogeneity and
\eqref{eq:seshadri-finite-lower},
\[
 \varepsilon(cF^*\{\omega_{\FS}\},x)
 =c\,\varepsilon(F^*\{\omega_{\FS}\},x)\geq cr.
\]
If the stated curve exists, its ratio and
\eqref{eq:seshadri-finite-lower} give
$\varepsilon(F^*\{\omega_{\FS}\},x)=r$.  The scaled Seshadri constant is
therefore $cr$, and the universal upper bound in
\Cref{prop:seshadri-obstruction} shows that the realized interval is the
complete range.
\end{proof}

\begin{remark}[Why the target rotation is needed]
The projective unitary transformation in \Cref{prop:finite-pullback} may
depend on $\lambda$.  A singleton fiber alone does not give a composition
formula for an anisotropic singularity.  For example, let
\[
 f(z_1,z_2)=(z_1^2,z_2^3),
 \qquad
 u(w_1,w_2)=\max\{2\log|w_1|,\log|w_2|\}.
\]
Then $f^{-1}(0)=\{0\}$,
$\ord_0(f^*\mathfrak m_0)=2$, and $\nu(u,0)=1$, whereas
$\nu(u\circ f,0)=3$.  The averaging in \Cref{lem:generic-rotation}
recovers the minimal-order factor for almost every target rotation.
\end{remark}

\subsection{Products of projective spaces}\label{subsec:products}

We now combine the finite-pullback criterion with an explicit finite
morphism to obtain the multiprojective theorem.

\begin{proof}[Proof of \Cref{thm:projective-space-products}]
Let $A^N$ denote the top self-intersection number and set
\[
 \mathcal V:=A^N
 =\frac{N!}{\prod_{j=1}^k n_j!}\prod_{j=1}^k a_j^{n_j}.
\]
Choose an integer $m_0\geq1$ such that $b_j:=m_0a_j$ is a positive
integer for every $j$, and first take
\[
 x=([1:0:\cdots:0],\ldots,[1:0:\cdots:0]).
\]
Fix independent binary linear forms $S,T$ and identify $\Pj^{n_j}$ with
the projective space of binary forms of degree $n_j$ by using the ordered
monomial basis
\[
 S^{n_j},S^{n_j-1}T,\ldots,T^{n_j}.
\]
Thus the coefficient coordinates are $[Z_0:\cdots:Z_{n_j}]$ and the
coordinate point is $[S^{n_j}]$.  We make the analogous identification
of the target $\Pj^N$, so that $[S^N]$ is its coordinate point
$[1:0:\cdots:0]$.

Multiplication of binary forms defines
\[
 \mathcal M:\prod_{j=1}^k\Pj^{n_j}\longrightarrow\Pj^N,
 \qquad ([\mathcal B_1],\ldots,[\mathcal B_k])
 \longmapsto[\mathcal B_1\cdots\mathcal B_k].
\]
To see that $\mathcal M$ is finite, write a fixed binary form as
\[
 \mathcal B=\prod_{\xi\in\Pj^1}L_\xi^{m_\xi},
\]
where $L_\xi$ is a binary linear form vanishing at $\xi$.
A factorization $\mathcal B=\mathcal B_1\cdots\mathcal B_k$ with
$\deg\mathcal B_j=n_j$ is determined by nonnegative integers
$m_{\xi,j}$ satisfying
\[
 \sum_jm_{\xi,j}=m_\xi,
 \qquad \sum_\xi m_{\xi,j}=n_j.
\]
Only finitely many arrays satisfy these conditions, so every fiber is
finite.  The morphism is proper because its source is projective; hence
it is finite.  It is surjective because every binary form splits into
linear factors over $\C$.  Multihomogeneity gives
\begin{equation}\label{eq:binary-multiplication-data}
 \mathcal M^*\mathcal O_{\Pj^N}(1)=\mathcal O_X(1,\ldots,1),
 \qquad
 \deg\mathcal M=\frac{N!}{\prod_jn_j!}.
\end{equation}
Indeed, since $\mathcal M$ is finite and surjective, the projection
formula and the normalization of the hyperplane class give
\[
 \deg\mathcal M
 =\int_X(H_1+\cdots+H_k)^N
 =\frac{N!}{\prod_jn_j!}.
\]
Here $H_j^{n_j+1}=0$ and
$\int_X\prod_jH_j^{n_j}=1$, so only the displayed top-degree monomial
contributes.
The only factorization of $S^N$ with these prescribed degrees is
\[
 \mathcal M^{-1}([S^N])
 =\{([S^{n_1}],\ldots,[S^{n_k}])\}.
\]

On the $j$-th factor define the coordinate power morphism
\[
 \varpi_j[Z_0:\cdots:Z_{n_j}]
 =[Z_0^{b_j}:\cdots:Z_{n_j}^{b_j}],
\]
and put $F=\mathcal M\circ\prod_j\varpi_j$.  Each $\varpi_j$ is finite
and satisfies $\varpi_j^*\mathcal O_{\Pj^{n_j}}(1)
=\mathcal O_{\Pj^{n_j}}(b_j)$; hence $\deg\varpi_j=b_j^{n_j}$.
It also has the coordinate
point as the unique inverse image of $[S^{n_j}]$.  It follows from
\eqref{eq:binary-multiplication-data} that
\begin{equation}\label{eq:product-map-data}
 \begin{aligned}
 F^*\mathcal O_{\Pj^N}(1)
   &=\mathcal O_X(b_1,\ldots,b_k),
 &c_1\bigl(\mathcal O_X(b_1,\ldots,b_k)\bigr)&=m_0A,\\
 \deg F
   &=\frac{N!}{\prod_jn_j!}\prod_jb_j^{n_j}
     =m_0^N\mathcal V.
 \end{aligned}
\end{equation}
Moreover, $F^{-1}([S^N])=\{x\}$ set-theoretically.

It remains to compute the order of this singleton fiber.  Write a binary
form near $[S^{n_j}]$ as
\[
 S^{n_j}+\sum_{\ell=1}^{n_j}z_{j,\ell}S^{n_j-\ell}T^\ell.
\]
After applying $\varpi_j$, the coefficient $z_{j,\ell}$ is replaced by
$z_{j,\ell}^{b_j}$.  Thus, for $1\leq\ell\leq N$, the nonconstant
coefficients of the product are
\[
 C_\ell=
 \sum_{\substack{\ell_1+\cdots+\ell_k=\ell\\0\leq\ell_j\leq n_j}}
 \prod_{j=1}^kz_{j,\ell_j}^{b_j},
 \qquad z_{j,0}=1.
\]
Every monomial occurring here has ordinary total degree at least
$b_{\min}:=\min_jb_j$.  In
$C_1=\sum_jz_{j,1}^{b_j}$, the lowest-degree terms involve distinct
variables and cannot cancel, so $C_1$ has order exactly $b_{\min}$.
In this affine chart the ideal $F^*\mathfrak m_{[S^N]}$ is generated by
$C_1,\ldots,C_N$; its order is therefore the minimum of their vanishing
orders.  Hence
\begin{equation}\label{eq:product-fiber-order}
 \ord_x(F^*\mathfrak m_{[S^N]})
 =b_{\min}=m_0\min_ja_j.
\end{equation}

The normalized pullback class in \Cref{prop:finite-pullback} is
\[
 (\deg F)^{-1/N}F^*c_1(\mathcal O_{\Pj^N}(1))
 =\frac{A}{\mathcal V^{1/N}}=\alpha.
\]
Choose an index $j_0$ with $b_{j_0}=b_{\min}$, take a line through the
coordinate point in the $j_0$-th factor, and keep all other factors fixed.
The resulting curve $\ell\subset X$ satisfies
\[
 F^*\mathcal O_{\Pj^N}(1)\cdot\ell=b_{\min}
 =\ord_x(F^*\mathfrak m_{[S^N]})\operatorname{mult}_x\ell,
 \qquad \operatorname{mult}_x\ell=1.
\]
The sharpness part of \Cref{prop:finite-pullback} now gives
\[
 \mathcal R_\alpha(x)
 =[0,\varepsilon(\alpha,x)]
 =\left[0,\frac{\min_ja_j}{\mathcal V^{1/N}}\right].
\]
Finally, let $x_0$ be the coordinate point used above and choose
$g\in\prod_j\operatorname{PGL}(n_j+1,\C)$ with $g(x_0)=x$.  Since
$g^*H_j=H_j$, one has $g^*\alpha=\alpha$.  If a positive current $T$ in
$\alpha$ realizes a value $t$ at $x_0$, then $(g^{-1})^*T$ lies in the
same class, has top Monge--Amp\`ere measure $\delta_x$, and has Lelong
number $t$ at $x$.  Replacing its smooth representative by the chosen
representative of $\alpha$, as in \eqref{eq:projective-range}, changes
the potential only by a smooth function.  Thus
$\mathcal R_\alpha(x)=\mathcal R_\alpha(x_0)$, and the same formula holds
at every point.
\end{proof}

\subsection{A further consequence}\label{subsec:further-consequence}

\begin{cor}[Equal-weight multipoles]\label{cor:projective-multipoles}
Let $X$ be a smooth projective $n$-fold, $n\geq2$, and let $\mathscr L$ be an
ample line bundle.  Put
\[
 \alpha_{\mathscr L}=\frac{c_1(\mathscr L)}{(c_1(\mathscr L)^n)^{1/n}}.
\]
For every K\"ahler form $\vartheta_{\mathscr L}$ with
$\{\vartheta_{\mathscr L}\}=\alpha_{\mathscr L}$,
there are an integer
$d\geq1$ and distinct points $x_1,\ldots,x_d\in X$ such that, for every
$\lambda\in[0,1]$, there is
$U_\lambda\in\PSH(X,\vartheta_{\mathscr L})\cap
DMA(X,\vartheta_{\mathscr L})$ with
\[
 (\vartheta_{\mathscr L}+\ddc U_\lambda)^n
 =\frac1d\sum_{i=1}^d\delta_{x_i},
 \qquad
 \nu(U_\lambda,x_i)=d^{-1/n}\lambda.
\]
\end{cor}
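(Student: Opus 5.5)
The plan is to apply the first part of \Cref{prop:finite-pullback} to a generic linear projection $X\to\Pj^n$. Replacing $\mathscr L$ by a power $\mathscr L^{m}$ does not change $\alpha_{\mathscr L}$, so I take $m$ with $\mathscr L^m$ very ample and embed $X\subset\Pj^M$. A general linear subspace of codimension $n+1$ misses $X$, and projecting from it gives a finite surjective morphism $F:X\to\Pj^n$. It satisfies $F^*\mathcal O_{\Pj^n}(1)=\mathscr L^m$ and $d=\deg F=m^n c_1(\mathscr L)^n$. Then
\[
 cF^*\{\omega_{\FS}\}=d^{-1/n}m\,c_1(\mathscr L)=\alpha_{\mathscr L},
 \qquad c=d^{-1/n}.
\]
So $\vartheta_{\mathscr L}$ is an admissible K\"ahler form in the proposition, with $\vartheta_{\mathscr L}=cF^*\omega_{\FS}+\ddc h$.

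Next I need the fiber over the pole to be reduced. Since $F$ is generically \'etale, as shown in the proof of \Cref{lem:finite-BT-pullback}, the branch locus $B_F$ is a proper analytic subset. I choose $b\notin B_F$ and a projective unitary $R$ with $R(a)=b$. Replacing $F$ by $R^{-1}\circ F$ keeps finiteness, degree, and pullback class, because $R^*\omega_{\FS}=\omega_{\FS}$, and it puts $a$ outside the branch locus. The fiber $F^{-1}(a)$ then consists of $d$ distinct points $x_1,\ldots,x_d$, each with $e_{x_i}(F)=1$.

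Now I set $U_\lambda=cG_{n,\lambda}\circ F-h$. The first part of \Cref{prop:finite-pullback} gives $U_\lambda\in\PSH\cap DMA$ and
\[
 (\vartheta_{\mathscr L}+\ddc U_\lambda)^n=\frac1d\sum_i\delta_{x_i}.
\]
For the Lelong number, $F$ is a local biholomorphism near each $x_i$ because $x_i\notin R_F$. In local coordinates, $U_\lambda$ is therefore $c$ times a biholomorphic pullback of the psh germ $g_\lambda$ plus smooth terms. Lelong numbers are invariant under biholomorphisms, so $\nu(U_\lambda,x_i)=c\lambda=d^{-1/n}\lambda$. Equivalently, \Cref{lem:generic-rotation} with $r=1$ and $f$ invertible gives the same value for every rotation, with no genericity needed.

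The only real subtlety is ensuring an unramified fiber over the pole while keeping the pole at $a$. The rotation argument above resolves this, since $G_{n,\lambda}\circ R^{-1}$ has pole $b$ and $\omega_{\FS}$ is unitary-invariant. I also need to check that the points $x_i$, and hence $d$, do not depend on $\lambda$. This holds because $F$ and $R$ are chosen once, before $\lambda$ varies, exactly as the statement requires.
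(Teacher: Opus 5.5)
Your proposal is correct and follows essentially the paper's route: embed $X$ by $|\mathscr L^{\otimes m}|$, take a finite linear projection $F:X\to\Pj^n$ with $F^*\mathcal O_{\Pj^n}(1)\simeq\mathscr L^{\otimes m}$, and move an unbranched value to $a$ by a projective unitary transformation. You then apply the first part of \Cref{prop:finite-pullback} and use biholomorphic invariance of Lelong numbers at the $d$ unramified fiber points. The only difference is cosmetic: you project in one step from a general linear center of codimension $n+1$ disjoint from $X$, whereas the paper composes successive point projections.
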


\begin{proof}
Choose an integer $m_1\geq1$ such that $\mathscr L^{\otimes m_1}$ is
very ample and embed $X$ by $|\mathscr L^{\otimes m_1}|$.
Starting from $X\subset\Pj^M$, restrict successive projective linear
projections $\Pj^q\dashrightarrow\Pj^{q-1}$ whose centers lie outside
the current image, until the target is $\Pj^n$.  These are unrelated to
the factor projections $\operatorname{pr}_j$ in
\Cref{thm:projective-space-products}.  At each step, every fiber is
contained in a line through the projection center.  A
positive-dimensional fiber would be the whole line and would therefore
contain the center, contradicting its choice.  Thus each restricted
projection is proper and quasi-finite, hence finite.  At every step,
hyperplanes in the new target pull back to hyperplanes through the
projection center.  Since that center is disjoint from the current
image, this linear subsystem is base-point free on the image; hence the
restricted projection preserves the pullback of the hyperplane bundle.
Induction from
$\mathcal O_{\Pj^M}(1)|_X\simeq\mathscr L^{\otimes m_1}$ shows that the
composition $F:X\to\Pj^n$ satisfies
\[
 F^*\mathcal O_{\Pj^n}(1)\simeq\mathscr L^{\otimes m_1}.
\]
Its image is a closed $n$-dimensional subvariety of $\Pj^n$, hence all
of $\Pj^n$.

In characteristic zero, the finite dominant map $F$ is generically
\'{e}tale.  Its ramification locus is a proper closed subset, and the
branch locus, namely its image under the finite map $F$, is a proper
closed subset of $\Pj^n$.
Choose a value outside the branch locus and
postcompose $F$ with a projective unitary
transformation so that this value is the point $a$ used in
\Cref{thm:main}.  Its fiber consists of
$d=\deg F=m_1^nc_1(\mathscr L)^n$ distinct points.  Apply
\Cref{prop:finite-pullback}; the normalized pullback class is
\[
 d^{-1/n}F^*c_1(\mathcal O_{\Pj^n}(1))=\alpha_{\mathscr L},
\]
and $F$ is a local biholomorphism at each point $x_i$ of the fiber.
Local biholomorphisms preserve Lelong numbers, while the correction $h$
is smooth.  Hence
$\nu(U_\lambda,x_i)=d^{-1/n}\nu(G_{n,\lambda},a)
=d^{-1/n}\lambda$.
\end{proof}

\section*{Acknowledgments}

The author used ChatGPT (OpenAI) during the preparation of the manuscript
for exploratory drafting and proof-audit assistance.  All mathematical arguments,
references, and conclusions were independently verified by the author, who
assumes full responsibility for the contents of the paper.

\end{document}